\newcommand\blfootnote[1]{%
  \begingroup
  \renewcommand\thefootnote{}\footnote{#1}%
  \addtocounter{footnote}{-1}%
  \endgroup
}
\theoremstyle{definition}
\newtheorem{theorem}{Theorem}[section]% meant for sectionwise numbers
\newtheorem{proposition}[theorem]{Proposition}% 
\newtheorem{lemma}[theorem]{Lemma}
\newtheorem{corollary}[theorem]{Corollary}
\newtheorem{example}[theorem]{Example}%
\newtheorem{definition}[theorem]{Definition}%
\begin{document}

\title[Askey-Wilson version of Second Main Theorem for holomorphic curves in projective space]{Askey-Wilson version of Second Main Theorem for holomorphic curves in projective space}

\author*[1]{\fnm{Chengliang} \sur{Tan}}%\email{chengtan@uef.fi}

\author[1]{\fnm{Risto} \sur{Korhonen}}%\email{risto.korhonen@uef.fi}

\affil[1]{\orgdiv{Department of Physics and Mathematics}, \orgname{University of Eastern Finland}, \orgaddress{\street{P.O. Box 111}, \postcode{FI-80101} \city{Joensuu}, \country{Finland}}}

\abstract{In this paper, an Askey-Wilson version of the Wronskian-Casorati determinant $\mathcal{W}(f_{0}, \dots, f_{n})(x)$ for meromorphic functions $f_{0}, \dots, f_{n}$ is introduced to establish an Askey-Wilson version of the general form of the Second Main Theorem in projective space. This improves upon the original Second Main Theorem for the Askey-Wilson operator due to Chiang and Feng. In addition, by taking into account the number of irreducible components of hypersurfaces, an Askey-Wilson version of the Truncated Second Main Theorem for holomorphic curves into projective space with hypersurfaces located in $l$-subgeneral position is obtained.}

\keywords{Second Main Theorem, Askey-Wilson Operator, Holomorphic Curves, Projective Space}

\pacs[MSC Classification]{32H30, 30D35, 39A13}

\maketitle
\section{Introduction}\label{sec1}
\blfootnote{*Corresponding author.}
\blfootnote{Email address: chengtan@uef.fi (C.L. Tan), risto.korhonen@uef.fi  (R. Korhonen).}
In 1925, Nevanlinna established value distribution theory and presented a Second Main Theorem (SMT) for non-constant meromorphic functions \cite{Nevan}. Subsequently, Cartan extended the SMT to holomorphic curves $f:\mathbb{C}\rightarrow \mathbb{P}^{n}(\mathbb{C})$ \cite{Cartan}. Cartan's theorem can yield better results for certain kinds of problems compared with the SMT \cite{strenth}. In 1997, motivated by Schmidt's Subspace Theorem in number theory, Vojta slightly generalized Cartan's theorem and provided a general form of the SMT \cite{Vojta}. In the same year, by utilizing Ye's techniques \cite{Ye}, Ru established a general form of the SMT with an improved error term, enhancing Vojta's results \cite{Ru1}. Leveraging the general form of the SMT, Ru derived a defect relation for holomorphic curves intersecting hypersurfaces, proving the Shiffman conjecture, which was proposed by B. Shiffman in 1979 \cite{Shiff}. Shortly afterward, Ru also extended the defect relation to algebraic varieties (not limited to projective space) \cite{Ru4}. In 2019, Quang refined Ru’s results \cite{Ru4} to apply to hypersurfaces in subgeneral position \cite{Quang}. Recently, by taking into account the number of irreducible components of hypersurfaces, a new SMT for holomorphic maps into projective space with hypersurfaces was introduced by Shi and Yan \cite{Shi}.

Halburd and the second author were the first to formulate Nevanlinna theory for the difference operator \cite{Ha1, Ha2}, which was successfully applied to detect solutions of various difference equations \cite{Ha3, Ha4}. Subsequently, Halburd, the second author, and Tohge \cite{hyperplane}, as well as Pit-Mann Wong, Hui-Fai Law, and Philip P. W. Wong \cite{Wong}, independently extended difference Nevanlinna theory to projective space. Later, a difference analogue of the SMT for meromorphic mappings into algebraic varieties was provided by P.-C. Hu and N. V. Thin \cite{Hu}. Building on the strengths of the difference analogue of Nevanlinna theory, Cheng and Chiang, as well as Chiang and Feng separately developed Nevanlinna theory for the Wilson operator \cite{NANN} and the Askey-Wilson divided difference operator \cite{NADV}. This development led to the definition of an Askey-Wilson type Nevanlinna deficiency, offering a new interpretation that many significant infinite products arising from the study of basic hypergeometric series should be regarded as zero/pole scarce.

The purpose of this paper is to extend the Askey-Wilson analogue of the SMT for holomorphic curves into projective space with hypersurfaces. In the following section, we begin with some foundational concepts about the Askey-Wilson operator and Nevanlinna characteristics. In the third section, we further generalize the Logarithmic Derivative Lemma for Askey-Wilson operator and the Chiang and Feng's estimation to Logarithmic Derivative. Subsequently, we introduce the Askey-Wilson version of the Wronskian-Casorati determinant $\mathcal{W}(f_{0}, \dots, f_{n})(x)$ for meromorphic functions $f_{0}, \dots, f_{n}$. It is worth noting that it has alternative definitions when $|x|$ is sufficiently large, which will help us address various situations. In Section \ref{sec5}, we present a general form of the Askey-Wilson version (Truncated) SMT into projective space for hyperplanes. Finally, we establish the Askey-Wilson version of the Truncated SMT for hypersurfaces by improving a key lemma due to Shi and Yan. The improved lemma can also be applied to enhance the classical SMT and Schmidt's Subspace Type Theorem.

\section{Askey-Wilson operator and Nevanlinna characteristic}\label{sec2}

Let $f$ be a complex function, $q \in \mathbb{C}$, and $0 < |q| < 1$. For each $x \in \mathbb{C}$, set $x = \frac{z + z^{-1}}{2}$, where we assume $|z| \geq 1$ and $z = x + i\sqrt{1 - x^{2}}$ when $|z| = 1$, so that $x = \text{Re}(z) \in [-1, 1]$. Thus, there corresponds a unique $z$ for each fixed $x \in \mathbb{C}$, with $|z| \rightarrow \infty$ as $|x| \rightarrow \infty$, and vice versa. The \textit{one-sided Askey-Wilson shift operators} $(\eta_{q}f)(x)$ and $(\eta_{q}^{-1}f)(x)$ are defined by 
\begin{equation*}
    (\eta_{q}f)(x) := f\left(\frac{q^{\frac{1}{2}}z + q^{-\frac{1}{2}}z^{-1}}{2}\right), \quad (\eta_{q}^{-1}f)(x) := f\left(\frac{q^{-\frac{1}{2}}z + q^{\frac{1}{2}}z^{-1}}{2}\right).
\end{equation*}
We also write $(\eta_{q}^{0}f)(x) := f(x)$, $(\eta_{q}^{n}f)(x) := (\eta_{q}(\eta_{q}^{n-1}f))(x)$, and $(\eta_{q}^{-n}f)(x) := (\eta_{q}^{-1}(\eta_{q}^{-(n-1)}f))(x)$ for $n \in \mathbb{N}$. It is easy to check that when $|x| > \mathcal{R}(n, q)$ for some positive constants $\mathcal{R}(n, q)$ (specifically, $|z| > |q|^{-\frac{n}{2}}$), we have
\begin{eqnarray}\label{equa2.1}
    (\eta_{q}^{n}f)(x) = f\left(\frac{q^{\frac{n}{2}}z + q^{-\frac{n}{2}}z^{-1}}{2}\right), \quad (\eta_{q}^{-n}f)(x) = f\left(\frac{q^{-\frac{n}{2}}z + q^{\frac{n}{2}}z^{-1}}{2}\right).
\end{eqnarray}
Furthermore, let $f(x)$ be an entire function; then $(\eta_{q}^{n}f)(x)$ and $(\eta_{q}^{-n}f)(x)$ are analytic, and $(\eta_{q}^{-m_{1}}(\eta_{q}^{m_{2}}f))(x) = (\eta_{q}^{m_{2}}(\eta_{q}^{-m_{1}}f))(x) = (\eta_{q}^{m_{2} - m_{1}}f)(x)$ for $m_{1}, m_{2} = 0, \dots, n$ and any $x \in \mathbb{C}$ satisfying $|x| > \mathcal{R}(n, q)$. Generally speaking, these properties for $\eta_{q}^{n}$ and $\eta_{q}^{-n}$ may not hold for any $x \in \mathbb{C}$. For instance, let $I(x) = x$ be the identity map, and set $x_{0} = (q^{-1/4} + q^{1/4})/2$ with the corresponding $z_{0} = q^{-1/4}$; then $(\eta_{q}^{-1}(\eta_{q}I))(x_{0}) = (q^{-3/4} + q^{3/4})/2 \neq I(x_{0})$.

The \textit{Askey-Wilson divided difference operator} $\mathcal{D}_{q}$, which acts on $f$, is defined by
\begin{equation*}
    (\mathcal{D}_{q}f)(x) := \frac{(\eta_{q}f)(x) - (\eta_{q}^{-1}f)(x)}{\eta_{q}x - \eta_{q}^{-1}x},
\end{equation*}
where $x \neq \pm 1$, and we simply write $\eta_{q}^{\pm}x = (\eta_{q}^{\pm}I)(x)$. From now on, we refer to $\mathcal{D}_{q}$ as the Askey-Wilson operator or AW operator for short. We also adopt the notation $(\mathcal{D}_{q}^{0}f)(x) = f(x)$ and $(\mathcal{D}_{q}^{n}f)(x) = (\mathcal{D}_{q}(\mathcal{D}_{q}^{n-1}f))(x)$ for $n \in \mathbb{N}$. In the case $x = \pm 1$, $(\mathcal{D}_{q}f)(\pm 1)$ is given by
\begin{equation*}
    (\mathcal{D}_{q}f)(\pm 1) := \lim\limits_{x \rightarrow \pm 1}(\mathcal{D}_{q}f)(x) = f'\left(\pm \frac{q^{\frac{1}{2}} + q^{-\frac{1}{2}}}{2}\right),
\end{equation*}
when $f$ is differentiable at $\pm (q^{\frac{1}{2}} + q^{-\frac{1}{2}})/2$. According to this definition, for any two complex functions $f$ and $g \not\equiv 0$, we obtain the following product rule and quotient rule for the Askey-Wilson operator:
\begin{eqnarray*}
    (\mathcal{D}_{q}(f g))(x) &=& (\eta_{q}f)(x)(\mathcal{D}_{q}g)(x) + (\eta_{q}^{-1}g)(x)(\mathcal{D}_{q}f)(x)\\
    &=& (\eta_{q}g)(x)(\mathcal{D}_{q}f)(x) + (\eta_{q}^{-1}f)(x)(\mathcal{D}_{q}g)(x),\\
    \left(\mathcal{D}_{q}\frac{f}{g}\right)(x) &=& \frac{(\eta_{q}g)(x)(\mathcal{D}_{q}f)(x) - (\eta_{q}f)(x)(\mathcal{D}_{q}g)(x)}{(\eta_{q}g)(x)(\eta_{q}^{-1}g)(x)}\\
    &=& \frac{(\eta_{q}^{-1}g)(x)(\mathcal{D}_{q}f)(x) - (\eta_{q}^{-1}f)(x)(\mathcal{D}_{q}g)(x)}{(\eta_{q}g)(x)(\eta_{q}^{-1}g)(x)}.
\end{eqnarray*}
Conventionally, the \textit{Askey-Wilson averaging operator} $\mathcal{A}_{q}$ is defined by
\begin{equation*}
    (\mathcal{A}_{q}f)(x) := \frac{(\eta_{q}f)(x) + (\eta_{q}^{-1}f)(x)}{2}.
\end{equation*}
We also adopt the notation
\begin{eqnarray*}
    (\mathcal{A}_{q^{n}}f)(x) &=& \frac{(\eta_{q^{n}}f)(x) + (\eta_{q^{-n}}f)(x)}{2}\\
    &=:& \frac{f\left(\frac{q^{n/2}z + q^{-n/2}z^{-1}}{2}\right) + f\left(\frac{q^{-n/2}z + q^{n/2}z^{-1}}{2}\right)}{2}
\end{eqnarray*}
for $n \in \mathbb{N}$. It follows from (\ref{equa2.1}) that $(\mathcal{A}_{q^{n}}f)(x) = \frac{(\eta_{q}^{n}f)(x) + (\eta_{q}^{-n}f)(x)}{2}$ when $|x| > \mathcal{R}(n, q)$.

With the help of $\mathcal{A}_{q}$, the above product and quotient rules can be rewritten as follows:

\smallskip

\begin{theorem}\label{the2.1}
    (Askey-Wilson product and quotient rules) \cite{NADV} For any pair of complex functions $f$ and $g$, we have:
    
    (i) \[(\mathcal{D}_{q}(f g))(x) = (\mathcal{A}_{q}f)(x)(\mathcal{D}_{q}g)(x) + (\mathcal{A}_{q}g)(x)(\mathcal{D}_{q}f)(x),\]
    
    (ii) \[\left(\mathcal{D}_{q}\frac{f}{g}\right)(x) = \frac{(\mathcal{A}_{q}g)(x)(\mathcal{D}_{q}f)(x) - (\mathcal{A}_{q}f)(x)(\mathcal{D}_{q}g)(x)}{(\eta_{q}g)(x)(\eta_{q}^{-1}g)(x)},\]
    
    whenever $g \not\equiv 0$.
\end{theorem}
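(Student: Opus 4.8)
The plan is to obtain both formulas by a direct algebraic manipulation of the definition of $\mathcal{D}_{q}$, the only structural input being that the one-sided shifts $\eta_{q}$ and $\eta_{q}^{-1}$ are substitution operators and hence multiplicative: $(\eta_{q}(fg))(x) = (\eta_{q}f)(x)(\eta_{q}g)(x)$ and, wherever $g$ is not identically zero, $(\eta_{q}(f/g))(x) = (\eta_{q}f)(x)/(\eta_{q}g)(x)$, and likewise for $\eta_{q}^{-1}$. In fact the two ``one-sided'' product and quotient rules displayed just before the statement are already of this form, so the whole task reduces to averaging them; I will, however, indicate how those one-sided rules themselves come out of the computation.

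For (i), I would start from $(\mathcal{D}_{q}(fg))(x) = \bigl[(\eta_{q}f)(x)(\eta_{q}g)(x) - (\eta_{q}^{-1}f)(x)(\eta_{q}^{-1}g)(x)\bigr]/(\eta_{q}x - \eta_{q}^{-1}x)$ and split the numerator in the two symmetric ways $AB - ab = A(B-b) + b(A-a) = B(A-a) + a(B-b)$, with $A = (\eta_{q}f)(x)$, $a = (\eta_{q}^{-1}f)(x)$, $B = (\eta_{q}g)(x)$, $b = (\eta_{q}^{-1}g)(x)$. Dividing by $\eta_{q}x - \eta_{q}^{-1}x$ turns each split into one of the one-sided product rules, namely $(\mathcal{D}_{q}(fg))(x) = (\eta_{q}f)(x)(\mathcal{D}_{q}g)(x) + (\eta_{q}^{-1}g)(x)(\mathcal{D}_{q}f)(x) = (\eta_{q}g)(x)(\mathcal{D}_{q}f)(x) + (\eta_{q}^{-1}f)(x)(\mathcal{D}_{q}g)(x)$; adding these two identities and dividing by $2$ replaces $\tfrac{1}{2}\bigl[(\eta_{q}f)(x) + (\eta_{q}^{-1}f)(x)\bigr]$ by $(\mathcal{A}_{q}f)(x)$ and similarly for $g$, which is exactly (i). For (ii), writing $(\mathcal{D}_{q}(f/g))(x)$ over the common denominator $(\eta_{q}g)(x)(\eta_{q}^{-1}g)(x)$ produces the numerator $\bigl[(\eta_{q}f)(\eta_{q}^{-1}g) - (\eta_{q}^{-1}f)(\eta_{q}g)\bigr]/(\eta_{q}x - \eta_{q}^{-1}x)$; inserting the cross terms $(\eta_{q}f)(\eta_{q}g)$ or $(\eta_{q}^{-1}f)(\eta_{q}^{-1}g)$ and dividing by $\eta_{q}x - \eta_{q}^{-1}x$ recovers the two one-sided quotient rules, and averaging them collapses the numerator to $(\mathcal{A}_{q}g)(x)(\mathcal{D}_{q}f)(x) - (\mathcal{A}_{q}f)(x)(\mathcal{D}_{q}g)(x)$, giving (ii). As a cross-check I would note that (ii) also follows from (i) applied to the decomposition $f = (f/g)\cdot g$ and solving the resulting identity for $(\mathcal{D}_{q}(f/g))(x)$.

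There is essentially no conceptual obstacle here; the content is algebraic. The only points requiring care are bookkeeping ones. First, one must know that $(\eta_{q}g)(x)$ and $(\eta_{q}^{-1}g)(x)$ are not identically zero so that the quotients and the formula in (ii) make sense as identities of meromorphic functions; this is guaranteed by $g \not\equiv 0$, since $\eta_{q}^{\pm 1}$ are substitution operators. Second, the exceptional points $x = \pm 1$, where $\eta_{q}x = \eta_{q}^{-1}x$ and $\mathcal{D}_{q}$ is defined by the limit $f \mapsto f'\bigl(\pm(q^{1/2}+q^{-1/2})/2\bigr)$: there the identities are obtained by letting $x \to \pm 1$ in the versions already established on a punctured neighbourhood, using that all the quantities $(\eta_{q}^{\pm 1}f)(x)$, $(\eta_{q}^{\pm 1}g)(x)$, $(\mathcal{D}_{q}f)(x)$, $(\mathcal{D}_{q}g)(x)$, $(\mathcal{A}_{q}f)(x)$, $(\mathcal{A}_{q}g)(x)$ extend continuously to $x = \pm 1$ whenever $f$ and $g$ are differentiable at the relevant arguments.
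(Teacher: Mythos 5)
Your proposal is correct and matches the paper's route: the paper derives the one-sided product and quotient rules directly from the definition of $\mathcal{D}_{q}$ (using that $\eta_{q}^{\pm 1}$ are substitution operators) and then obtains Theorem \ref{the2.1} by rewriting, i.e.\ averaging, those two one-sided identities via $\mathcal{A}_{q}$, exactly as you do. Your extra remarks on $g\not\equiv 0$ and the limiting case $x=\pm 1$ are sound but not needed beyond what the paper (citing \cite{NADV}) records.
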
 

\smallskip

Furthermore, Chiang and Feng proved the following properties for the operators $\mathcal{D}_{q}$ and $\mathcal{A}_{q}$.

\smallskip

\begin{theorem}\label{the2.3}
    \cite[Theorem 2.1]{NADV} Let $f$ be an entire function. Then both $\mathcal{A}_{q}f$ and $\mathcal{D}_{q}f$ are entire. Moreover, if $f(x)$ is meromorphic, then $\mathcal{A}_{q}f$ and $\mathcal{D}_{q}f$ are also meromorphic.
\end{theorem}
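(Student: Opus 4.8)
The plan is to lift the apparent multivaluedness of the substitution $x=(z+z^{-1})/2$ to the variable $z$ on the punctured plane $\mathbb{C}^{*}$ and then descend back to $x$. Set $y_{\pm}(z):=\tfrac12\bigl(q^{\pm 1/2}z+q^{\mp 1/2}z^{-1}\bigr)$, so that $(\eta_{q}f)(x)=f(y_{+}(z))$, $(\eta_{q}^{-1}f)(x)=f(y_{-}(z))$, and $\eta_{q}x-\eta_{q}^{-1}x=y_{+}(z)-y_{-}(z)=\tfrac12(q^{1/2}-q^{-1/2})(z-z^{-1})$. The two admissible $z$-coordinates of a given $x$ are interchanged by the involution $\iota(z)=z^{-1}$, under which $y_{\pm}$ are swapped, $y_{\pm}(z^{-1})=y_{\mp}(z)$, so the unordered pair $\{(\eta_{q}f)(x),(\eta_{q}^{-1}f)(x)\}$ and (up to sign) $\eta_{q}x-\eta_{q}^{-1}x$ do not depend on the choice; hence $\mathcal{A}_{q}f$ and $\mathcal{D}_{q}f$ are genuine single-valued functions of $x$, and the task is to establish their regularity.

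Suppose first $f$ is entire. Then $F(z):=f(y_{+}(z))$ is holomorphic on $\mathbb{C}^{*}$ and $f(y_{-}(z))=F(z^{-1})$, so $\widetilde{A}(z):=\tfrac12\bigl(F(z)+F(z^{-1})\bigr)$ is holomorphic on $\mathbb{C}^{*}$ and $\iota$-invariant. For the divided difference set $\widetilde{D}(z):=\bigl(F(z)-F(z^{-1})\bigr)\big/\bigl(\tfrac12(q^{1/2}-q^{-1/2})(z-z^{-1})\bigr)$; since $0<|q|<1$ gives $q^{1/2}\neq q^{-1/2}$, the denominator has simple zeros exactly at the fixed points $z=\pm1$ of $\iota$, where the numerator $F(z)-F(z^{-1})$ also vanishes, so $\widetilde{D}$ is again holomorphic on $\mathbb{C}^{*}$ and $\iota$-invariant.

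It remains to show that an $\iota$-invariant function holomorphic on $\mathbb{C}^{*}$ descends to a holomorphic function of $x$. The map $p(z)=(z+z^{-1})/2$ realizes $\mathbb{C}$ as the quotient $\mathbb{C}^{*}/\iota$, a two-sheeted cover ramified exactly over $x=\pm1$ with simple ramification since $x\mp1=(z\mp1)^{2}/(2z)$. Away from $\pm1$ the descent is immediate through a local holomorphic section of $p$. Near a ramification point use the coordinate $v=z-z^{-1}$, under which $\iota$ acts as $v\mapsto-v$ and $v^{2}=4(x^{2}-1)$; an $\iota$-invariant holomorphic germ is then even in $v$, i.e.\ a power series in $v^{2}$, hence a holomorphic function of $x$ in a neighbourhood of $x=\pm1$. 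Applying this to $\widetilde{A}$ and $\widetilde{D}$ shows that $\mathcal{A}_{q}f$ and $\mathcal{D}_{q}f$ are entire, the value produced at $x=\pm1$ agreeing with the limiting definition of $(\mathcal{D}_{q}f)(\pm1)$. If instead $f$ is meromorphic, then $F=f\circ y_{+}$ is meromorphic on $\mathbb{C}^{*}$, because $y_{+}$ is a non-constant holomorphic map $\mathbb{C}^{*}\to\mathbb{C}$; hence $\widetilde{A},\widetilde{D}$ are meromorphic and $\iota$-invariant, and the same local model (an even Laurent series in $v$ being a meromorphic function of $v^{2}$, hence of $x$ near $x=\pm1$) shows that $\mathcal{A}_{q}f$ and $\mathcal{D}_{q}f$ are meromorphic on $\mathbb{C}$.

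The only delicate point is the descent across the ramification points $x=\pm1$: everywhere else $z$ is locally a holomorphic function of $x$ (off $[-1,1]$) and $\eta_{q}x-\eta_{q}^{-1}x$ does not vanish, so holomorphy (resp.\ meromorphy) is automatic. At $x=\pm1$ one needs the even-series/linearization step for $\mathcal{A}_{q}f$, and for $\mathcal{D}_{q}f$ additionally the cancellation of the simple zero of $\eta_{q}x-\eta_{q}^{-1}x$ against that of $F(z)-F(z^{-1})$ at $z=\pm1$; a purely computational alternative would be to expand numerator and denominator in powers of $z\mp1$ and read off the regularity in $x\mp1$ directly.
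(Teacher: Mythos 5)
Your argument is correct. Note, however, that the paper itself offers no proof of this statement: it is quoted verbatim from Chiang and Feng \cite[Theorem 2.1]{NADV}, so there is no internal proof to compare against; what you have written is a self-contained substitute for the cited result. Your route --- symmetrize in $z$, observe that $\tfrac12\bigl(F(z)+F(z^{-1})\bigr)$ and $\bigl(F(z)-F(z^{-1})\bigr)/\bigl(\tfrac12(q^{1/2}-q^{-1/2})(z-z^{-1})\bigr)$ are invariant under the involution $z\mapsto z^{-1}$, holomorphic (resp.\ meromorphic) on $\mathbb{C}^{*}$ because the simple zero of $z-z^{-1}$ at $z=\pm1$ is cancelled by the zero of the numerator, and then descend through the ramified double cover $x=(z+z^{-1})/2$ using the even power/Laurent series in $v=z-z^{-1}$, $v^{2}=4(x^{2}-1)$ --- is the natural geometric way to see single-valuedness and regularity in $x$, and all the individual steps check out: $y_{\pm}(z^{-1})=y_{\mp}(z)$, $v$ is a local coordinate at $z=\pm1$ since $dv/dz=1+z^{-2}\neq 0$ there, and $F=f\circ y_{+}$ is meromorphic on $\mathbb{C}^{*}$ because $y_{+}$ is a proper non-constant holomorphic map, so poles pull back to a locally finite set. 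Two cosmetic points: the parenthetical ``off $[-1,1]$'' is a slight conflation of the global branch $|z|\geq 1$ with local sections of $p$ --- local holomorphic sections exist at every $x\neq\pm1$, including on $(-1,1)$, which is all your argument uses --- and in the meromorphic case the sentence about the numerator vanishing at $z=\pm1$ is only relevant when $F$ is regular there; when $F$ has a pole at $z=\pm1$ you simply invoke that a quotient of meromorphic functions with non-vanishing denominator is meromorphic, which your even-Laurent-series descent already covers.
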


\smallskip

By using the composition properties of entire functions or by replacing $q^{1/2}$ and $q^{-1/2}$ with $q^{n/2}$ and $q^{-n/2}$ in the proof of \cite[Theorem 2.1]{NADV}, we can also prove the following similar proposition.

\smallskip

\begin{proposition}\label{pro2.3}
    Let $n \in \mathbb{N}$ and let $f$ be a complex function. If $f$ is entire (meromorphic), then $\mathcal{D}_{q}^{n}f$, $\mathcal{A}_{q^{n}}f$ and $(\eta_{q^{n}}f)(\eta_{q^{-n}}f)$ are all entire (meromorphic).
\end{proposition}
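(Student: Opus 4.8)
The plan is to derive all three assertions from Theorem~\ref{the2.3}, applied both as stated and with the parameter $q$ replaced by $q^{n}$. For the iterated operator $\mathcal{D}_{q}^{n}$ I would induct on $n$: the case $n=0$ is the tautology $\mathcal{D}_{q}^{0}f=f$, and assuming $\mathcal{D}_{q}^{n-1}f$ is entire (resp.\ meromorphic), I apply Theorem~\ref{the2.3} to the function $g:=\mathcal{D}_{q}^{n-1}f$ to conclude that $\mathcal{D}_{q}^{n}f=\mathcal{D}_{q}g$ is again entire (resp.\ meromorphic). Equivalently, this is the observation that a finite composition of operations each preserving entireness/meromorphy still preserves it, which is what is meant by ``using the composition properties of entire functions''.

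For $\mathcal{A}_{q^{n}}f$ the key point is that $\eta_{q^{n}}$, $\eta_{q^{-n}}$, $\mathcal{A}_{q^{n}}$ and the analogously defined one-step divided difference $\mathcal{D}_{q^{n}}$ are precisely the Askey--Wilson operators attached to the parameter $\tilde{q}:=q^{n}$ (upon choosing $\tilde q^{1/2}=q^{n/2}$), and $\tilde q$ again satisfies $0<|\tilde q|<1$ since $0<|q|<1$. Hence Theorem~\ref{the2.3}, read with $q$ replaced by $q^{n}$, gives at once that $\mathcal{A}_{q^{n}}f$ and $\mathcal{D}_{q^{n}}f$ are entire when $f$ is entire and meromorphic when $f$ is meromorphic; in particular the apparent singularities at $x=\pm1$ produced by the branch point of the map $x\mapsto z$ are removable, this being part of the content of \cite[Theorem~2.1]{NADV}. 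Alternatively, one reruns the proof of \cite[Theorem~2.1]{NADV} verbatim with $q^{1/2},q^{-1/2}$ replaced by $q^{n/2},q^{-n/2}$.

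For the product $(\eta_{q^{n}}f)(\eta_{q^{-n}}f)$ I would avoid analysing its $z$-dependence directly and instead express it through the two functions just handled. Write $a=(\eta_{q^{n}}f)(x)$ and $b=(\eta_{q^{-n}}f)(x)$, so that $a+b=2(\mathcal{A}_{q^{n}}f)(x)$; from $\eta_{q^{n}}x-\eta_{q^{-n}}x=\tfrac12(q^{n/2}-q^{-n/2})(z-z^{-1})$ and $(z-z^{-1})^{2}=(z+z^{-1})^{2}-4=4(x^{2}-1)$ one gets $(a-b)^{2}=(q^{n/2}-q^{-n/2})^{2}(x^{2}-1)\,(\mathcal{D}_{q^{n}}f)(x)^{2}$. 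Therefore, suppressing the argument $x$,
\begin{equation*}
(\eta_{q^{n}}f)(\eta_{q^{-n}}f)=ab=\Big(\tfrac{a+b}{2}\Big)^{2}-\Big(\tfrac{a-b}{2}\Big)^{2}=(\mathcal{A}_{q^{n}}f)^{2}-\tfrac14(q^{n/2}-q^{-n/2})^{2}(x^{2}-1)\,(\mathcal{D}_{q^{n}}f)^{2},
\end{equation*}
and the right-hand side is a polynomial expression in $x$, $\mathcal{A}_{q^{n}}f$ and $\mathcal{D}_{q^{n}}f$, hence entire (resp.\ meromorphic) by the previous paragraph.

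I do not anticipate a genuine obstacle; the argument is essentially bookkeeping on top of Theorem~\ref{the2.3}. The points that need care are: keeping the iterate $\mathcal{D}_{q}^{n}$ strictly distinct from the one-step operator $\mathcal{D}_{q^{n}}$ (they are different objects); verifying that $q^{n}$ is an admissible parameter so that Theorem~\ref{the2.3} may legitimately be reused; and checking that the rewriting of $(\eta_{q^{n}}f)(\eta_{q^{-n}}f)$ automatically disposes of the branch point of $z=z(x)$ at $x=\pm1$ — which it does, precisely because $\mathcal{A}_{q^{n}}f$ and $\mathcal{D}_{q^{n}}f$ are already globally entire/meromorphic and $x^{2}-1$ is a polynomial.
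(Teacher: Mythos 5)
Your proposal is correct and follows essentially the same route as the paper, whose one-line proof invokes exactly the two ingredients you use: composition properties of entire/meromorphic functions (your induction for $\mathcal{D}_{q}^{n}f$) and rerunning the proof of \cite[Theorem 2.1]{NADV} with $q^{\pm 1/2}$ replaced by $q^{\pm n/2}$, i.e.\ reading Theorem \ref{the2.3} with parameter $q^{n}$ (your treatment of $\mathcal{A}_{q^{n}}f$). Your identity $(\eta_{q^{n}}f)(\eta_{q^{-n}}f)=(\mathcal{A}_{q^{n}}f)^{2}-\tfrac{1}{4}(q^{n/2}-q^{-n/2})^{2}(x^{2}-1)(\mathcal{D}_{q^{n}}f)^{2}$ is a correct and slightly more explicit way to settle the product term than the paper's sketch, since it reduces that case to the two already established and automatically removes the branch-point issue at $x=\pm1$.
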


\smallskip

We refer readers to \cite{NANN, NADV, leibniz, Quantum, Qt1, Qt2, Qt3} for additional properties of the AW operator. Now, we review some key notions and results of classical characteristic functions from \cite{strenth, hayman, hyperplane, hyperbolic, Ru2}. Let $n \in \mathbb{N}$. For any non-constant holomorphic curve $f: \mathbb{C} \rightarrow \mathbb{P}^{n}(\mathbb{C})$ with a reduced representation $\textbf{f} = (f_{0}, f_{1}, \dots, f_{n})$, the \textit{characteristic function} $T_{f}(r)$ of $f$ is defined by
\begin{equation*}
    T_{f}(r) := \int_{0}^{2\pi} \frac{\log \lVert \textbf{f}(re^{i\theta}) \rVert}{2\pi} \, d\theta, \quad \lVert \textbf{f}(z) \rVert = \max\limits_{k \in \{0, \dots, n\}} |f_{k}(z)|.
\end{equation*}

It should be noted that $T_{f}(r)$ is independent, up to an additive constant, of the choice of the reduced representation of $f$. We have the following lemma:

\smallskip

\begin{lemma}\label{le2.4}(\cite[Lemma 3.1]{hyperbolic})
    Let $f = (f_{0}, \dots, f_{n})$ be an entire function without common zeros. Then
    \begin{equation}\label{equa2.2}
        N\left(r, \frac{1}{f_{i}}\right) \leq T_{f}(r) + O(1)
    \end{equation}
    as $r \rightarrow \infty$, and
    \begin{equation}\label{equa2.3}
        T\left(r, \frac{f_{i}}{f_{j}}\right) \leq T_{f}(r) + O(1)
    \end{equation}
    for any $i, j = 0, \dots, n$ with $i \neq j$.
\end{lemma}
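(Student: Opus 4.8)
The plan is to obtain both estimates from Jensen's formula together with the single pointwise inequality $|f_{k}(z)|\le\lVert\textbf{f}(z)\rVert$, exploiting that, by definition, $T_{f}(r)$ is exactly the circular mean of $\log\lVert\textbf{f}(re^{i\theta})\rVert$. Since $f_{0},\dots,f_{n}$ have no common zero, $\lVert\textbf{f}\rVert$ is zero-free and all the quantities below are finite; it is understood that the relevant $f_{i}$, $f_{j}$ are not identically zero. Recall that for an entire function $h\not\equiv 0$ Jensen's formula takes the form $\frac{1}{2\pi}\int_{0}^{2\pi}\log|h(re^{i\theta})|\,d\theta=N\!\left(r,\tfrac{1}{h}\right)+O(1)$, where the $O(1)$ is the logarithm of the modulus of the leading Taylor coefficient of $h$ at the origin.

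For \eqref{equa2.2}: since $f_{i}$ is entire, Jensen's formula and $|f_{i}|\le\lVert\textbf{f}\rVert$ give
\[
N\!\left(r,\frac{1}{f_{i}}\right)=\frac{1}{2\pi}\int_{0}^{2\pi}\log|f_{i}(re^{i\theta})|\,d\theta+O(1)\le\frac{1}{2\pi}\int_{0}^{2\pi}\log\lVert\textbf{f}(re^{i\theta})\rVert\,d\theta+O(1)=T_{f}(r)+O(1).
\]
For \eqref{equa2.3}: write $g=f_{i}/f_{j}$ and split $T(r,g)=m(r,g)+N(r,g)$. Each pole of $g$ lies at a zero of $f_{j}$ and has order at most that of the zero, so $N(r,g)\le N(r,1/f_{j})$. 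On the circle $|z|=r$ one has the pointwise bound $\log^{+}|g(z)|\le\log\lVert\textbf{f}(z)\rVert-\log|f_{j}(z)|$ (the right-hand side is $\ge 0$ because $\lVert\textbf{f}\rVert\ge|f_{j}|$, it dominates $\log|f_{i}/f_{j}|$ because $\lVert\textbf{f}\rVert\ge|f_{i}|$, and it is $+\infty$ at the finitely many zeros of $f_{j}$ on the circle), so integrating and applying Jensen's formula to $f_{j}$ yields $m(r,g)\le T_{f}(r)-N(r,1/f_{j})+O(1)$. Adding the two bounds, the terms $N(r,1/f_{j})$ cancel and $T(r,g)\le T_{f}(r)+O(1)$, which is \eqref{equa2.3}.

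There is no genuine obstacle here; the proof is a short exercise in classical Nevanlinna theory. The only points deserving attention are (a) the bookkeeping of the bounded error terms produced by Jensen's formula when $f_{i}$ or $f_{j}$ vanishes at the origin --- equivalently, the fact that $T_{f}(r)$ is independent of the chosen reduced representation up to an additive constant --- and (b) the observation that the a priori large quantity $N(r,1/f_{j})$ entering the estimate for $m(r,g)$ is exactly cancelled by the crude bound $N(r,g)\le N(r,1/f_{j})$; this cancellation is precisely what produces the sharp coefficient $1$ (rather than $2$) in front of $T_{f}(r)$ in \eqref{equa2.3}.
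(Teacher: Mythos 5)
Your argument is correct: the paper itself gives no proof of this lemma (it is quoted from the cited reference), and your derivation via Jensen's formula is exactly the standard argument one would expect there. Both steps check out — for \eqref{equa2.2} the bound $|f_i|\le\lVert\mathbf{f}\rVert$ plus Jensen's formula suffices, and for \eqref{equa2.3} your pointwise bound $\log^{+}|f_i/f_j|\le\log\lVert\mathbf{f}\rVert-\log|f_j|$ together with $N(r,f_i/f_j)\le N(r,1/f_j)$ (valid since every pole of $f_i/f_j$ has order at most that of the corresponding zero of $f_j$, which also covers the case where $f_i$ and $f_j$ share zeros, allowed here because ``no common zeros'' only excludes simultaneous vanishing of all $n+1$ components) produces the cancellation giving coefficient $1$ in front of $T_f(r)$.
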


\smallskip

Suppose $D$ is a hypersurface of degree $d$ in $\mathbb{P}^{n}(\mathbb{C})$ and $Q$ is a homogeneous polynomial of degree $d$ defining $D$, i.e., $D = \{ Q = 0 \}$. The \textit{proximity function} $m_{f}(r, D)$ of $f$ with respect to $D$ is defined by
\begin{equation*}
    m_{f}(r, D) = \int_{0}^{2\pi} \frac{\lambda_{D}\left(\textbf{f}(re^{i\theta})\right)}{2\pi} \, d\theta, \quad \lambda_{D}(\textbf{f}(z)) = \log \frac{\lVert \textbf{f}(z) \rVert^{d}\lVert Q \rVert}{|Q(\textbf{f})(z)|},
\end{equation*}
where $z \not\in f^{-1}(D)$ and $\lVert Q \rVert$ is the maximum of the absolute values of the coefficients in $Q$. The \textit{counting function} $n_{f}(t, D)$ is defined as the number of zeros of $Q(\textbf{f})$ in $|z| < t$, counting multiplicities. The \textit{integrated counting function} is then defined by
\begin{equation*}
    N_{f}(r, D) = \int_{0}^{r} \frac{n_{f}(t, D) - n_{f}(0, D)}{t} \, dt + n_{f}(r, D) \log r.
\end{equation*}

For convenience, we sometimes write $m_{f}(r, D)$ and $N_{f}(r, D)$ as $m_{f}(r, Q)$ and $N_{f}(r, Q)$ if there is no ambiguity. From the Poisson-Jensen formula, we obtain the following theorem:

\smallskip

\begin{theorem}[First Main Theorem]
    Let $f: \mathbb{C} \rightarrow \mathbb{P}^{n}(\mathbb{C})$ be a holomorphic map, and let $D$ be a hypersurface in $\mathbb{P}^{n}(\mathbb{C})$ of degree $d$. If $f(\mathbb{C}) \not\subset \text{Supp} \, D$, then for every finite positive real number $r$,
    \begin{equation*}
        m_{f}(r, D) + N_{f}(r, D) = d T_{f}(r) + O(1),
    \end{equation*}
    where $O(1)$ is a constant independent of $r$.
\end{theorem}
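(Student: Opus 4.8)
The plan is to derive the identity from Jensen's formula --- the mean-value form of the Poisson--Jensen formula --- applied to a single auxiliary entire function. First I would reduce to one function: since $\textbf{f}=(f_{0},\dots,f_{n})$ is a reduced representation, the $f_{k}$ are entire with no common zero, so for a homogeneous polynomial $Q$ of degree $d$ defining $D$ the composition $g:=Q(\textbf{f})=Q(f_{0},\dots,f_{n})$ is entire, and the hypothesis $f(\mathbb{C})\not\subset\text{Supp}\,D$ forces $g\not\equiv0$. By the very definitions, the zeros of $g$ counted with multiplicity are precisely what $n_{f}(t,D)$ records, so $N_{f}(r,D)$ is, up to the normalisation at the origin, the counting function $N(r,1/g)$ of the zeros of $g$.

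Next I would write down the pointwise identity built into $\lambda_{D}$: for every $z$ with $g(z)\neq0$,
\begin{equation*}
    \lambda_{D}(\textbf{f}(z))=d\log\lVert\textbf{f}(z)\rVert+\log\lVert Q\rVert-\log|g(z)|.
\end{equation*}
Integrating this against $d\theta/2\pi$ over the circle $|z|=r$ (the finitely many angles with $g(re^{i\theta})=0$ do not affect the integral), the left side becomes $m_{f}(r,D)$, the first term on the right becomes $d\,T_{f}(r)$ by the definition of $T_{f}$, the second is the $r$-independent constant $\log\lVert Q\rVert$, and the last is $-\tfrac{1}{2\pi}\int_{0}^{2\pi}\log|g(re^{i\theta})|\,d\theta$; hence
\begin{equation*}
    m_{f}(r,D)=d\,T_{f}(r)+\log\lVert Q\rVert-\frac{1}{2\pi}\int_{0}^{2\pi}\log|g(re^{i\theta})|\,d\theta.
\end{equation*}
Then I would invoke Jensen's formula for $g$: writing $g(z)=c_{k}z^{k}+\cdots$ near $0$ with $c_{k}\neq0$,
\begin{equation*}
    \frac{1}{2\pi}\int_{0}^{2\pi}\log|g(re^{i\theta})|\,d\theta=N\!\left(r,\frac{1}{g}\right)+\log|c_{k}|=N_{f}(r,D)+O(1),
\end{equation*}
the $O(1)$ being the constant $\log|c_{k}|$, independent of $r$. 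Substituting this back yields $m_{f}(r,D)+N_{f}(r,D)=d\,T_{f}(r)+O(1)$, as claimed.

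The one point that needs care --- rather than a genuine obstacle --- is the bookkeeping at the origin: when $f(0)\in\text{Supp}\,D$ one must use the full Jensen formula with the factor $z^{k}$ and check that the normalisation term in the definition of $N_{f}(r,D)$ matches $k\log r+\sum_{0<|a_{j}|\le r}\log\frac{r}{|a_{j}|}$ up to a bounded quantity, so that the whole discrepancy (together with $\log\lVert Q\rVert$ and $\log|c_{k}|$) is absorbed into $O(1)$. Apart from this routine verification there is no analytic difficulty here: every term is controlled by $T_{f}(r)$, since $\deg Q=d$ and $|g(z)|\le C\lVert Q\rVert\,\lVert\textbf{f}(z)\rVert^{d}$ for a combinatorial constant $C$, and the whole content of the theorem is the one application of Jensen's formula above.
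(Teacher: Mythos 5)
Your argument is correct and is exactly the route the paper intends: the paper simply invokes the Poisson--Jensen formula for this theorem, and your derivation (apply Jensen's formula to the entire function $Q(\textbf{f})\not\equiv 0$ and integrate the pointwise identity defining $\lambda_{D}$) is the standard spelled-out version of that, with the origin normalisation correctly flagged and absorbed into the $O(1)$.
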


\smallskip

Let $V\subset \mathbb{P}^{n}(\mathbb{C})$ be a smooth complex projective variety of dimension $k\geq 1$, $p, l\in\mathbb{N}$ and $p\geq l$. Hypersurfaces $\{D_{i}\}_{i=1}^{p}$ in $\mathbb{P}^{n}(\mathbb{C})$ are said to be located in \textit{$l$-subgeneral position in $V$} if for any subset $E\subset \{1, \cdots, p\}$ with $\#E=l+1$,
\begin{eqnarray*}
    \bigcap_{j\in E}D_{j}\cap V=\varnothing.
\end{eqnarray*}
If $l=k$, we also say that they are \textit{in general position in $V$}.

\section{Logarithmic hyper-order}\label{sec3}
Suppose $f$ is a meromorphic function. The \textit{logarithmic hyper-order} of $f$ is denoted by
\begin{eqnarray*}
    \sigma_{2}^{\log}(f) := \limsup\limits_{r \rightarrow \infty} \frac{\log^{+} \log^{+} T(r, f)}{\log \log r},
\end{eqnarray*}
where $\log^{+} A = \max\{0, \log A\}$ for $A \geq 0$. Asymptotic relations for a positive, non-decreasing continuous function are frequently used in the construction and application of Nevanlinna theory. For instance, Halburd, the second author, and Tohge considered the asymptotic relation of the real function $T(r)$ with hyper-order strictly less than $1$ \cite{hyperplane}. Zheng and the second author further studied the asymptotic relation of the real function $T(r)$, which is convex with respect to $\log r$ and satisfies $\limsup\limits_{r \rightarrow +\infty} T(r) / \log r = 0$ \cite{log}. Some asymptotic relations for functions of zero order can also be found in \cite{pty}. Now, we consider the case where the logarithmic hyper-order is strictly less than $1$, with the basic idea originating from \cite{hyperplane}.

\smallskip

\begin{lemma}\label{le3.1}
    Let $T(r):[e, +\infty)\rightarrow [0, +\infty)$ be a non-decreasing continuous function. If the logarithmic hyper-order of $T$ is strictly less than one, i.e.,
    \begin{eqnarray}\label{aeq3.1}
        \limsup\limits_{r\rightarrow +\infty}\frac{\log^{+}\log^{+} T(r)}{\log\log r}:=\sigma_{2}^{\log}<1,
    \end{eqnarray}
    and $\delta\in(0, 1-\sigma_{2}^{\log})$, then
    \begin{eqnarray}\label{aeq3.2}
        T(r\log r)=T(r)+o\left(\frac{T(r)}{(\log r)^{\delta}}\right),
    \end{eqnarray}
    where $r$ runs to infinity outside of a set $F$ satisfying
    \begin{eqnarray}\label{aeq3.3}
        \int_{F\cap [e, +\infty)}\frac{dt}{t\log t}<+\infty.
    \end{eqnarray}
\end{lemma}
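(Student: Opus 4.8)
The plan is to reduce the statement to a Borel-type growth lemma on a logarithmic scale and then run a standard interchange-of-integration argument. First I would substitute $u=\log r$ and put $P(u):=T(e^{u})$; since $e^{u}\cdot u=e^{u+\log u}$, the asserted relation becomes $P(u+\log u)=P(u)+o\bigl(P(u)/u^{\delta}\bigr)$, and because $dr/(r\log r)=du/u$ the exceptional set is now required to have finite logarithmic measure in the $u$-variable. If $T\equiv 0$ the statement is trivial, so I may assume $T\not\equiv 0$; monotonicity then forces $P(u)\ge c_{0}>0$ for all large $u$, so $W(u):=\log P(u)$ is defined, non-decreasing, continuous and bounded below there. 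The hypothesis $\sigma_{2}^{\log}<1$ translates into $\log^{+}P(u)\le u^{\beta}$ for all large $u$, where $\beta:=\sigma_{2}^{\log}+\varepsilon_{0}$ and $\varepsilon_{0}>0$ is chosen so small that $\beta+\delta<1$ — possible precisely because $\delta<1-\sigma_{2}^{\log}$ — hence $|W(u)|\le C_{1}u^{\beta}$ for all large $u$.

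The core estimate I would establish is
\[
\int_{u_{0}}^{\infty} u^{\delta-1}\bigl(W(u+\log u)-W(u)\bigr)\,du<\infty .
\]
To prove it, write $W(u+\log u)-W(u)=\int_{(u,\,u+\log u]}dW(t)$ as a Lebesgue--Stieltjes integral against the non-negative measure $dW$, and apply Tonelli to interchange the $u$- and $t$-integrations. For fixed $t$ the admissible range of $u$ is contained in $[t-\log t,\,t)$, because the strictly increasing map $h(u):=u+\log u$ satisfies $h^{-1}(t)\ge t-\log t$; on that range $u^{\delta-1}\le 2t^{\delta-1}$ for large $t$, so the inner integral is $\le 2t^{\delta-1}\log t$. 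Thus the left-hand side is bounded by $\int 2t^{\delta-1}\log t\,dW(t)$, which I integrate by parts: the boundary term at infinity is $O\bigl(t^{\delta+\beta-1}\log t\bigr)\to 0$ since $\delta+\beta<1$, and the remaining integral is $O\!\bigl(\int t^{\delta+\beta-2}\log t\,dt\bigr)<\infty$ since $\delta+\beta-2<-1$.

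From finiteness of $\int_{u_{0}}^{\infty}\bigl[u^{\delta}(W(u+\log u)-W(u))\bigr]\,du/u$ a routine extraction (choose $a_{n}\to\infty$ with tail integrals $<4^{-n}$ and collect the sets $\{u\ge a_{n}:\,u^{\delta}(W(u+\log u)-W(u))>2^{-n}\}$) yields a single set $E$ with $\int_{E}du/u<\infty$ outside of which $u^{\delta}\bigl(W(u+\log u)-W(u)\bigr)\to 0$. To transfer this back to $P$, note that outside $E$ one has $W(u+\log u)-W(u)\to 0$, so $P(u+\log u)/P(u)=\exp\bigl(W(u+\log u)-W(u)\bigr)$ is bounded; applying the elementary inequality $x-1\le x\log x$ for $x\ge 1$ with $x=P(u+\log u)/P(u)$ gives
\[
0\le \frac{P(u+\log u)-P(u)}{P(u)}\le \frac{P(u+\log u)}{P(u)}\bigl(W(u+\log u)-W(u)\bigr)=o\!\left(\frac{1}{u^{\delta}}\right)
\]
as $u\to\infty$, $u\notin E$. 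Reverting to $r=e^{u}$ and setting $F:=\{e^{u}:u\in E\}$ (enlarged by the bounded interval $[e,e^{u_{0}}]$ if one wishes the estimate to hold on all of the complement) gives the claim, with $\int_{F\cap[e,\infty)}dt/(t\log t)=\int_{E}du/u<\infty$.

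The main obstacle is the core estimate. One must check that the shift $u\mapsto u+\log u$ is slow enough that the Stieltjes--Fubini interchange costs only a factor $\sim\log t$, and — crucially — that the growth bound coming from $\sigma_{2}^{\log}<1$ (which controls $\log^{+}T$, not $T$ itself) still leaves room, i.e. that $\varepsilon_{0}$ can be taken with $\sigma_{2}^{\log}+\varepsilon_{0}+\delta<1$, so that both the boundary term and the remaining integral after integration by parts converge; the passage from the additive quantity $W(u+\log u)-W(u)$ back to the multiplicative ratio $P(u+\log u)/P(u)$ is then only a minor point, handled by the boundedness of the ratio off $E$.
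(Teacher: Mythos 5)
Your argument is correct, but it follows a genuinely different route from the paper. The paper proves the lemma by contradiction in the style of the difference-Nevanlinna literature: it fixes $\tilde{\delta}\in(\delta,1-\sigma_{2}^{\log})$, defines $F_{\alpha}=\{r:(T(r\log r)-T(r))/T(r)\geq\alpha(\log r)^{-\tilde{\delta}}\}$, assumes $F_{\alpha}$ has infinite logarithmic measure, builds a sequence $r_{n+1}\geq r_{n}\log r_{n}$ inside $F_{\alpha}$, and plays the resulting lower bound $T(r_{n})\geq\prod_{k<n}\bigl(1+\alpha(\log r_{k})^{-\tilde{\delta}}\bigr)T(r_{0})$ against the hypothesis $\sigma_{2}^{\log}<1$, splitting into the cases $r_{n}\geq e^{n^{1+\epsilon}}$ (which would force $F_{\alpha}$ to have finite logarithmic measure) and its negation (which would force $\sigma_{2}^{\log}\geq 1-\tilde{\delta}$). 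You instead pass to $u=\log r$, $W=\log T(e^{u})$, and prove the quantitative estimate $\int u^{\delta-1}\bigl(W(u+\log u)-W(u)\bigr)\,du<\infty$ by a Stieltjes--Tonelli interchange (using that $h(u)=u+\log u$ has $h^{-1}(t)\geq t-\log t$, so the swap costs only a factor $\log t$) followed by integration by parts, which is legitimate because $\varepsilon_{0}$ can be chosen with $\sigma_{2}^{\log}+\varepsilon_{0}+\delta<1$; the exceptional set then comes from a routine extraction, and the passage from $W$ back to $T$ via $x-1\leq x\log x$ is sound since the ratio is bounded off $E$. I checked the individual steps (the change of variables matching $dt/(t\log t)$ with $du/u$, the bound $|W(u)|\leq C_{1}u^{\beta}$, the boundary term $O(t^{\delta+\beta-1}\log t)$, the tail-set extraction, and the final transfer) and found no gaps. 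What each approach buys: the paper's argument is elementary (no measure-theoretic machinery) and, for fixed $\alpha$, actually yields the slightly sharper bound $O((\log r)^{-\tilde{\delta}})$ with $\tilde{\delta}>\delta$ outside a single $F_{\alpha}$; your Borel-type argument is more structural and more flexible -- the same Fubini computation works verbatim for other slow shifts $u\mapsto u+\phi(u)$ and makes explicit the integrability of $u^{\delta-1}\,d(\log T)$ that drives the result -- at the cost of invoking Lebesgue--Stieltjes integration and an integration by parts that the paper avoids.
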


\smallskip

\begin{proof}
    Let $\tilde{\delta}\in(\delta, 1-\sigma_{2}^{\log})$, $\alpha\in\mathbb{R}^{+}$ and define
    \begin{eqnarray*}
        F_{\alpha}:=\left\{r\in[e, +\infty): \frac{T(r\log r)-T(r)}{T(r)}\cdot (\log r)^{\tilde{\delta}}\geq \alpha\right\}.
    \end{eqnarray*}
    Assume $\int_{F_{\alpha}}1/(t\log t)dt=+\infty$, now we define a sequence $\{r_{n}\}$ using induction as follows. Since $F_{\alpha}$ is closed, we can set $r_{0}=\min\{F_{\alpha}\}$ and define $r_{n}=\min\{F_{\alpha}\cap [r_{n-1}\log r_{n-1}, +\infty)\}$ for all $n\in\mathbb{N}$. Thus, $r_{n+1}\geq r_{n}\log r_{n}$ and $F\subseteq \cup_{n=0}^{\infty}[r_{n}, r_{n}\log r_{n}]$, which leads to $T(r_{n+1})\geq T(r_{n}\log r_{n})\geq  \left(1+\frac{\alpha}{(\log r_{n})^{\tilde{\delta}}}\right)T(r_{n})$ for all $n\in\mathbb{N}_{0}$. Iterating this relation recursively yields
    \begin{equation*}%\label{eq3.3}
        T(r_{n})\geq \prod_{k=0}^{n-1}\left(1+\frac{\alpha}{(\log r_{k})^{\tilde{\delta}}}\right)T(r_{0}).
    \end{equation*}
    Assume that there exists a constant $\epsilon >0$ and an integer $m>e$ such that $r_{n}\geq e^{n^{1+\epsilon}}$ for all $n>m$. Then we can find $m_{0}>m$ such that $\log n\leq n^{\epsilon/2}$ for all $n\geq m_{0}$. Since $\log\left(1+\frac{\log\log t}{\log t}\right)$ is non-increasing when $t\geq e^{e}$, then we obtain
    \begin{eqnarray*}
        \int_{F_{\alpha}}\frac{dt}{t\log t}&\leq& \sum_{n=m_{0}}^{+\infty}\int_{r_{n}}^{r_{n}\log r_{n}}\frac{dt}{t\log t}+O(1)\\
        &=&\sum_{n=m_{0}}^{+\infty}\log\left(1+\frac{\log\log r_{n}}{\log r_{n}}\right)+O(1)\\
        &\leq&\sum_{n=m_{0}}^{+\infty}\frac{(1+\epsilon)\log n}{n^{1+\epsilon}}+O(1)\\
        &\leq&\sum_{n=m_{0}}^{+\infty}\frac{1+\epsilon}{n^{1+\epsilon/2}}+O(1)<+\infty,
    \end{eqnarray*}
    which contradicts the assumption. Thus there exists a subsequence $\{r_{n_{j}}\}_{j=1}^{+\infty}\subset \{r_{n}\}$ such that $r_{n_{j}}<e^{n^{1+\epsilon}}$ for all $j\in\mathbb{N}$. Then
    \begin{eqnarray*}
        \limsup\limits_{r\rightarrow +\infty}\frac{\log^{+}\log^{+} T(r)}{\log\log r}&\geq& \limsup\limits_{j\rightarrow +\infty}\frac{\log\left(\log T(r_{0})+\sum_{k=0}^{n_{j}-1}\log\left(1+\frac{\alpha}{(\log r_{k})^{\tilde{\delta}}}\right)\right)}{\log\log r_{n_{j}}}\\
        &\geq&\limsup\limits_{j\rightarrow +\infty}\frac{\log\left(n_{j}\frac{\alpha}{n_{j}^{(1+\epsilon)\tilde{\delta}}}\log\left(1+\frac{\alpha}{n_{j}^{(1+\epsilon)\tilde{\delta}}}\right)^{\frac{n_{j}^{(1+\epsilon)\tilde{\delta}}}{\alpha}}\right)}{(1+\epsilon)\log n_{j}}\\
        &\geq&\limsup\limits_{j\rightarrow +\infty}\frac{(1-(1+\epsilon)\tilde{\delta})\log n_{j}}{(1+\epsilon)\log n_{j}}\geq \frac{1}{1+\epsilon}-\tilde{\delta}.
    \end{eqnarray*}
    It follows that $\limsup\limits_{r\rightarrow+\infty}\frac{\log^{+}\log^{+} T(r)}{\log\log r}\geq 1-\tilde{\delta}$ because $\epsilon$ is arbitrary, which contradicts the assumption (\ref{aeq3.1}). Hence, the proof is complete.
    
\end{proof}

Following the argument of Chiang and Feng in \cite{NADV}, $\sigma_{2}^{\log}(f) < 1$ also implies that $f$ has order zero. Then, by applying a result from Miles \cite{Mile}, we have that $f$ can be represented as a quotient $f = g / h$, where both $g$ and $h$ are entire and $\sigma_{2}^{\log}(g) < 1$ and $\sigma_{2}^{\log}(h) < 1$ hold simultaneously. In this case, the Askey-Wilson operator is well-defined on the class of slow-growing meromorphic functions with logarithmic hyper-order strictly less than $1$. By a similar analysis, we can replace the assumption that $f(x)$ is of finite logarithmic order with $\sigma_{2}^{\log}(f) < 1$ in \cite[Lemma 4.2]{NADV}, thereby obtaining another version of the logarithmic derivative lemma for the AW operator as follows.

\smallskip

\begin{lemma}\label{le3.2}
    Let $n\in\mathbb{N}$ and $f(x)$ be a non-constant meromorphic function such that $\sigma_{2}^{\log}(f)<1$. Then, we have
    \begin{eqnarray}
        m\left(r, \frac{(\mathcal{D}_{q}^{n}f)(x)}{f(x)}\right)&=&O\left(\frac{T(r, f)}{\log\log r}\right),\label{aeq3.2.1}\\
        m\left(r, \frac{(\mathcal{A}_{q^{n}}f)(x)}{f(x)}\right)&=&O\left(\frac{T(r, f)}{\log\log r}\right)\label{aeq3.2.1.1}
    \end{eqnarray}
    and
    \begin{eqnarray}\label{aeq3.2.2}
        m\left(r, \frac{(\eta_{q^{n}}f)(\eta_{q^{-n}}f)(x)}{f^{2}(x)}\right)&=&O\left(\frac{T(r, f)}{\log\log r}\right),
    \end{eqnarray}
    where $r$ runs to infinity outside of a set $F$ satisfying
    \begin{eqnarray}\label{eq3.3}
        \int_{F}\frac{dt}{t\log t}<\infty.
    \end{eqnarray}
\end{lemma}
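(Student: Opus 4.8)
The plan is to strip off the operators one at a time and reduce everything to a single basic bound --- the Askey--Wilson \emph{shift} analogue of the logarithmic difference lemma,
\[
m\!\left(r,\frac{(\eta_{q}^{\pm 1}h)(x)}{h(x)}\right)=O\!\left(\frac{T(r,h)}{\log\log r}\right)\qquad(r\notin F),
\]
for an arbitrary non-constant meromorphic $h$ with $\sigma_{2}^{\log}(h)<1$ and an exceptional set $F$ of finite logarithmic measure. Granting this, the three estimates (\ref{aeq3.2.1}), (\ref{aeq3.2.1.1}), (\ref{aeq3.2.2}) follow by elementary reductions.

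First I would carry out those reductions. On every circle $|z|=r$ with $r$ large the associated point $x=(z+z^{-1})/2$ satisfies $|x|\ge (r-r^{-1})/2>\mathcal{R}(n,q)$, so (\ref{equa2.1}) holds there; hence on the circles relevant to the limit $r\to\infty$ we may write $\eta_{q^{n}}=\eta_{q}^{n}$, $\mathcal{A}_{q^{n}}=\frac{1}{2}(\eta_{q}^{n}+\eta_{q}^{-n})$, and $\eta_{q}x-\eta_{q}^{-1}x$ equals a constant times $\sqrt{x^{2}-1}$, whose reciprocal has proximity function $O(1)$. Combining this with subadditivity of $\log^{+}$ on sums and products and with the telescoping identities $\mathcal{D}_{q}^{n}f/f=\prod_{k=1}^{n}\mathcal{D}_{q}(\mathcal{D}_{q}^{k-1}f)/\mathcal{D}_{q}^{k-1}f$ and $\eta_{q}^{\pm n}f/f=\prod_{k=1}^{n}\eta_{q}^{\pm 1}(\eta_{q}^{\pm(k-1)}f)/\eta_{q}^{\pm(k-1)}f$, each of $m(r,\mathcal{D}_{q}^{n}f/f)$, $m(r,\mathcal{A}_{q^{n}}f/f)$ and $m(r,(\eta_{q^{n}}f)(\eta_{q^{-n}}f)/f^{2})$ is bounded by a finite sum of quantities $m(r,(\eta_{q}^{\pm1}h)/h)$ in which $h$ ranges over the fixed finite family $\{\mathcal{D}_{q}^{i}f\}_{i<n}\cup\{\eta_{q}^{\pm j}f\}_{j<n}$. (If some $\mathcal{D}_{q}^{i}f$ is constant the corresponding quotient vanishes identically and the claim is trivial.) By Proposition \ref{pro2.3} and the remarks on one-sided shifts in Section \ref{sec2}, every member of this family is meromorphic; a crude comparison of Askey--Wilson characteristics gives $T(r,h)\le K\,T(Cr,f)+O(\log r)$ for fixed constants $K,C$, which already forces $\sigma_{2}^{\log}(h)\le\sigma_{2}^{\log}(f)<1$, and the sharper comparison $T(r,h)=O(T(r,f))$ for $r$ outside a set satisfying (\ref{eq3.3}) follows from the latter together with Lemma \ref{le3.1}. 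Thus it suffices to prove the basic shift bound, the fixed integer $n$ entering only through the implied constant.

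For the basic bound I would reproduce the proof of \cite[Lemma 4.2]{NADV}, replacing its single use of the finite-logarithmic-order hypothesis by Lemma \ref{le3.1}. Conceptually, the substitution $u=\log r$ turns the Askey--Wilson shift into an ordinary translation $u\mapsto u+O(1)$, so the estimate sought is precisely the difference logarithmic derivative lemma under a hyper-order-less-than-one hypothesis (as in \cite{hyperplane}) applied to $u\mapsto T(e^{u},h)$, whose hyper-order equals $\sigma_{2}^{\log}(h)<1$; this also explains the target error, since the window $[r,r\log r]$ in the $r$-variable becomes the window $[u,u+\log u]$ in the $u$-variable and thus contributes a gain $1/\log u=1/\log\log r$. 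Concretely, I would pass to the $z$-variable, apply the Poisson--Jensen formula to $\log|h|$ together with Chiang and Feng's pointwise estimate for the logarithmic derivative, and obtain a bound of the shape
\[
m\!\left(r,\frac{\eta_{q}^{\pm1}h}{h}\right)\le \frac{C}{\log\log r}\left(T(r\log r,h)+N(r\log r,h)+N\!\left(r\log r, 1/h\right)+O(\log r)\right),
\]
valid for $r$ outside a set of finite logarithmic measure. Lemma \ref{le3.1} applied as in (\ref{aeq3.2}) with a suitable $\delta$ gives $T(r\log r,h)=T(r,h)+o(T(r,h))$ off $F$, the First Main Theorem bounds the counting terms by $O(T(r\log r,h))=O(T(r,h))$, and $\log r=O(T(r,h))$ since $h$ is non-constant; hence the right-hand side is $O(T(r,h)/\log\log r)$. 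Enlarging $F$ by the finitely many finite-logarithmic-measure sets produced above still yields a set satisfying (\ref{eq3.3}), and the factorisation result of Miles \cite{Mile} recalled before the lemma guarantees that all Askey--Wilson operators in play are genuinely defined on this slow-growth class.

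The core shift bound is where I expect the real work to lie. The delicate points are to check that the selection of exceptional radii inside Chiang and Feng's argument still produces a set of finite logarithmic measure under the weaker hypothesis $\sigma_{2}^{\log}(h)<1$, and that every error term in their estimate --- including those that under their original finite-log-order assumption were simply swallowed by $O(\log r)$ --- indeed fits inside the single factor $1/\log\log r$ multiplying a term of size $O(T(r,h))$, so that the bound collapses exactly to $O(T(r,h)/\log\log r)$. This is the step that uses the value $1$ in $\sigma_{2}^{\log}<1$, via the admissible window width $r\mapsto r\log r$ of Lemma \ref{le3.1}; the reductions in the preceding paragraph are, by comparison, routine bookkeeping.
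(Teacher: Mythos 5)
Your proposal is sound and rests on the same engine as the paper --- Chiang and Feng's logarithmic difference estimate (the paper's (\ref{aeq3.2.6}), i.e.\ inequalities (48), (49), (84) of \cite{NADV}) with the finite-logarithmic-order hypothesis replaced by Lemma \ref{le3.1} --- but your reduction is organised differently. The paper never telescopes: for $|x|$ large it writes $\mathcal{D}_{q}^{n}f$ directly as a combination of the multi-step shifts $\eta_{q}^{k_{1}}f$, $-n\leq k_{1}\leq n$, with coefficient functions $1/(\eta_{q}^{k+1}x-\eta_{q}^{k-1}x)$ of zero proximity, and bounds each $m(r,\eta_{q}^{k_{1}}f/f)$ by running the \cite{NADV} argument with $q$ replaced by $q^{k_{1}}$; consequently only $T(r,f)$ itself ever appears, and the $1/\log\log r$ gain on the counting part comes from the explicit conversion $n(r\log r,\cdot)=O(N(r,\cdot)/\log\log r)$ in (\ref{aeq3.2.3})--(\ref{aeq3.2.4}). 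You instead telescope into one-step shifts applied to the intermediate functions $h\in\{\mathcal{D}_{q}^{i}f\}\cup\{\eta_{q}^{\pm j}f\}$, which forces the extra step $T(r,h)=O(T(r,f))$ off an exceptional set; this is legitimate and non-circular provided you get it from a crude Poisson--Jensen-type comparison $T(r,h)\leq KT(Cr,f)+O(\log r)$ (as in \cite[Theorem 3.3]{NADV}) followed by Lemma \ref{le3.1} to absorb the dilated radius --- you must \emph{not} appeal to Theorem \ref{the3.4}, which is proved later using this very lemma. Two points to make explicit if you write this up: the intermediate bound you state ``of the shape'' $\tfrac{C}{\log\log r}(T+N+N+O(\log r))(r\log r)$ is not literally what \cite{NADV} gives --- their counting terms appear unintegrated and undivided, and the $n\mapsto N/\log\log r$ conversion of (\ref{aeq3.2.3}) is the hidden ingredient you need to insert; and the stray $O(\log r)$ errors you generate are harmless only because they sit inside quantities that are subsequently divided by $\log\log r$ (note $\log r=O(T(r,f))$ but \emph{not} $O(T(r,f)/\log\log r)$ when $f$ is rational). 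The paper's direct multi-step treatment avoids the characteristic comparison for intermediate functions altogether and keeps the dependence of the implied constant on $n$ transparent; your telescoping buys a single reusable one-step estimate at the cost of that extra comparison.
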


\smallskip

\begin{proof}
    We only prove (\ref{aeq3.2.1}) here, since a similar proof can be applied for (\ref{aeq3.2.1.1}) and (\ref{aeq3.2.2}) and $(\eta_{q^{\pm n}}f)(x)=(\eta_{q}^{\pm n}f)(x)$ when $|x|$ is sufficiently large. For any $R>e$, we have
     \begin{eqnarray}\label{aeq3.2.3}
        &&N(R\log R, f)\geq \int_{R}^{R\log R}\frac{n(t, f)-n(0, f)}{t}dt+n(0, f)\log (R\log R)\nonumber\\
        &\geq&n(R, f)\int_{R}^{R\log R}\frac{1}{t}dt-n(0, f)\int_{R}^{R\log R}\frac{1}{t}dt+n(0, f)\log (R\log R)\nonumber\\
        &\geq& n(R, f)\log\log R.
     \end{eqnarray}
    Then it follows by substituting $R=r\log r$ into the above inequality and applying Lemma \ref{le3.1} to $N(r, f)$ that
    \begin{eqnarray}\label{aeq3.2.4}
        n(r\log r, f)&\leq& \frac{N((r\log r)\log (r\log r), f)}{\log\log (r\log r)}=O\left(\frac{N(r\log r, f)}{\log\log r}\right)\nonumber\\
        &=&O\left(\frac{N(r, f)}{\log\log r}\right)\leq O\left(\frac{T(r, f)}{\log\log r}\right)
    \end{eqnarray}
    where $r$ runs to infinity outside of a set $F$ satisfying (\ref{eq3.3}). Similarly, we have
    \begin{eqnarray}\label{aeq3.2.5}
        n\left(r\log r, \frac{1}{f}\right)= O\left(\frac{T(r, f)}{\log\log r}\right).
    \end{eqnarray}

    Let integer $k_{1}$ satisfy $-n\leq k_{1}\leq n$. As in inequalities (48), (49), and (84) in \cite{NADV}, we replace $q$ and $q^{-1}$ with $q^{k_{1}}$ and $q^{-k_{1}}$, respectively. Then a form of logarithmic derivative estimation for the operator $\eta_{q}$ is derived as follows:
\begin{eqnarray}\label{aeq3.2.6}
    m\left(r, \frac{(\eta_{q}^{k_{1}} f)(x)}{f(x)}\right) &=& m\left(r, \frac{f[(q^{k_{1}/2} z + q^{-k_{1}/2} z^{-1}) / 2]}{f(x)}\right)\nonumber\\
    &=& O\left(\frac{m(r \log r, f) + m(r \log r, 1/f)}{\log r}\right)\nonumber\\
    &+& O\left(n(r \log r, f) + n(r \log r, 1/f)\right) + O(1)
\end{eqnarray}
for all sufficiently large $|x| = r > 0$. Thus, combining (\ref{aeq3.2.4}), (\ref{aeq3.2.5}), and (\ref{aeq3.2.6}), and applying the First Main Theorem and Lemma \ref{le3.1} to $T(r, f)$, we deduce that
\begin{eqnarray}\label{aeq3.2.7}
    m\left(r, \frac{(\eta_{q}^{k_{1}} f)(x)}{f(x)}\right) = O\left(\frac{T(r, f)}{\log \log r}\right),
\end{eqnarray}
where $r$ tends to infinity outside of a set $F$ satisfying (\ref{eq3.3}).

 Let
    \begin{eqnarray*}
        c(q)=\min\limits_{-n+1\leq k_{2}\leq n-1}\left\{|q^{\frac{k_{2}+1}{2}}-q^{\frac{k_{2}-1}{2}}|, |q^{-\frac{k_{2}+1}{2}}-q^{-\frac{k_{2}-1}{2}}|\right\}>0.
    \end{eqnarray*}
    By letting $|x|$ and hence $|z|$ to be sufficiently large, for any $-n+1\leq k_{3}\leq n-1$, we have
    \begin{eqnarray*}
        \left|\frac{1}{\eta_{q}^{k_{3}+1}x-\eta_{q}^{k_{3}-1}x}\right|&=&\left|\frac{2}{(q^{\frac{k_{3}+1}{2}}-q^{\frac{k_{3}-1}{2}})z-(q^{-\frac{k_{3}+1}{2}}-q^{-\frac{k_{3}-1}{2}})z^{-1}}\right|\\
        &\leq&\frac{2}{c(q)(|z|+|z^{-1}|)}\leq\frac{4}{c(q)|z|}\leq 1,
    \end{eqnarray*}
    which yields
    \begin{eqnarray*}
        m\left(r, \frac{1}{\eta_{q}^{k_{3}+1}x-\eta_{q}^{k_{3}-1}x}\right)=0
    \end{eqnarray*}
    for all sufficiently large $|x|$.
    
    Since $(\mathcal{D}_{q}^{n}f)(x)$ is a combination of $(\eta_{q}^{k_{1}}f)(x)$ and $1/(\eta_{q}^{k_{3}+1}x-\eta_{q}^{k_{3}-1}x)$ when $|x|$ is sufficiently large, then
    \begin{eqnarray}\label{aeq3.2.7.1}
        m\left(r, \frac{(\mathcal{D}_{q}^{n}f)(x)}{f(x)}\right)=O\left(\sum_{-n\leq k_{1}\leq n}m\left(r, \frac{(\eta_{q}^{k_{1}}f)(x)}{f(x)}\right)\right)=O\left(\frac{T(r, f)}{\log\log r}\right),
    \end{eqnarray}
    where $r$ runs to infinity outside of a set $F$ satisfying (\ref{eq3.3}).
\end{proof}

Following the definition of Chiang and Feng \cite{NADV}, assume $a \in \hat{\mathbb{C}}$, $f$ is a meromorphic function on $\mathbb{C}$, and $g: \mathbb{C} \rightarrow \mathbb{C}$ is a map, \textit{not necessarily} entire. The counting function $n(r, 1/(f(g(x)) - a))$ is defined to be the number of $a$-points of $f$, counting multiplicities at the point $g(x)$, in the set $\{g(x): |x| < r\}$. The corresponding integrated counting function $N(r, 1/(f(g(x)) - a))$ can also be denoted in the usual way according to $n(r, 1/(f(g(x)) - a))$. 

Recall that there is an $O(\log r)$ term in the error in \cite[Theorem 3.3]{NADV} because the one-sided Askey-Wilson shift operators $\eta_{q} x$ and $\eta_{q}^{-1} x$ are not invertible for all $x \in \mathbb{C}$. In what follows, we will not delve into this phenomenon but will attribute the error to $O(\log r)$. The following lemma is another version of \cite[Theorem 5.1]{NADV}, which will be frequently used in this paper.

\smallskip

\begin{lemma}\label{le3.3}
    Let $f$ be a non-constant meromorphic function with $\sigma_{2}^{\log}(f)<1$. Then, for each $a\in\hat{\mathbb{C}}$, and $n\in\mathbb{N}$, we have
    \begin{eqnarray}\label{eq3.3.3}
        N\left(r, \frac{1}{(\eta_{q}^{\pm n}f)-a}\right)=N\left(r, \frac{1}{f-a}\right)+S_{\log}(r, f)
    \end{eqnarray}
    where $S_{\log}(r, f)=O\left(\frac{T(r, f)}{\log\log r}\right)+O(\log r)$ and $r$ runs to infinity outside of a set $F$ satisfying (\ref{eq3.3}).
\end{lemma}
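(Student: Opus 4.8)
The plan is to relate the $a$-points of $\eta_{q}^{\pm n}f$ to those of $f$ through the substitution $z \mapsto q^{\pm n/2}z$ underlying (\ref{equa2.1}), and then to absorb the small distortion of the radius this produces into the error term by invoking Lemma \ref{le3.1}. This follows the strategy of Chiang and Feng for \cite[Theorem 5.1]{NADV}, with Lemma \ref{le3.1} taking the place of their finite-logarithmic-order estimates. Fix $a\in\hat{\mathbb{C}}$, the case $a=\infty$ (poles) being handled verbatim. By (\ref{equa2.1}) there is a constant $\mathcal{R}=\mathcal{R}(n,q)$ such that $(\eta_{q}^{\pm n}f)(x)=f(g_{\pm}(x))$ whenever $|x|>\mathcal{R}$, where $g_{\pm}(x)$ denotes the point whose canonical variable equals $q^{\pm n/2}z$. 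Since $z\mapsto q^{\pm n/2}z$ is injective holomorphic and $x\leftrightarrow z$ is a biholomorphism of the relevant domains, $g_{\pm}$ is injective and locally biholomorphic on $\{|x|>\mathcal{R}\}$, hence preserves multiplicities of $a$-points; moreover $|z|=2|x|+O(1)$ for $|z|$ large yields $|g_{\pm}(x)|=|q|^{\pm n/2}|x|+O(1)$. Consequently $g_{\pm}$ carries $\{\mathcal{R}<|x|<r\}$ into a disc of radius $c_{1}r+O(1)$ and, running the correspondence in reverse, every $a$-point of $f$ whose modulus lies between a large constant and $r$ is the $g_{\pm}$-image of an $a$-point of $\eta_{q}^{\pm n}f$ of modulus at most $c_{2}r+O(1)$, where $c_{1}=|q|^{\pm n/2}$ and $c_{2}=|q|^{\mp n/2}$.

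Next I would pass to the integrated counting functions. The $a$-points of $\eta_{q}^{\pm n}f$ in the bounded region $\{|x|\le\mathcal{R}\}$ are finitely many, since $\eta_{q}^{\pm n}f$ is meromorphic and $\not\equiv a$ (otherwise $f\equiv a$ by the identity theorem on $\{|x|>\mathcal{R}\}$), so they contribute $O(1)$ to the unintegrated counting function and $O(\log r)$ to the integrated one. Combined with the correspondence above and the monotonicity of $n(t,\cdot)$, this gives, for all large $r$ and with no exceptional set,
\begin{equation*}
    N\!\left(r,\frac{1}{(\eta_{q}^{\pm n}f)-a}\right)\le N\!\left(c_{1}'r,\frac{1}{f-a}\right)+O(\log r),\qquad N\!\left(r,\frac{1}{f-a}\right)\le N\!\left(c_{2}'r,\frac{1}{(\eta_{q}^{\pm n}f)-a}\right)+O(\log r),
\end{equation*}
where $c_{1}',c_{2}'$ are positive constants slightly larger than $c_{1},c_{2}$ (chosen to absorb the additive $O(1)$ terms). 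By the First Main Theorem $N(r,1/(f-a))\le T(r,f)+O(1)$, and the first inequality then also yields $N(r,1/((\eta_{q}^{\pm n}f)-a))\le T(c_{1}'r,f)+O(\log r)$; since logarithmic hyper-order is invariant under $r\mapsto cr$, both counting functions are non-decreasing, continuous, and of logarithmic hyper-order at most $\sigma_{2}^{\log}(f)<1$.

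Finally I would apply Lemma \ref{le3.1}. For any constant $c>0$ one has $cr\le r\log r$ once $r$ is large, so Lemma \ref{le3.1} together with the monotonicity of $N$ shows that, for each of $h(r)\in\{N(r,1/(f-a)),\,N(r,1/((\eta_{q}^{\pm n}f)-a))\}$, a constant change of radius changes the value only by $S_{\log}(r,f)$, i.e.\ $h(cr)=h(r)+S_{\log}(r,f)$ for $r$ outside a set obeying (\ref{eq3.3}); here one uses $1/(\log r)^{\delta}=o(1/\log\log r)$ together with $h(r)\le T(r,f)+O(\log r)$ to see that the error $o(h(r)/(\log r)^{\delta})$ produced by Lemma \ref{le3.1} is of type $S_{\log}(r,f)=O(T(r,f)/\log\log r)+O(\log r)$. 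Substituting these two identities into the two one-sided inequalities above gives (\ref{eq3.3.3}). The step I expect to demand the most care is the first: setting up an exact, multiplicity-preserving bijection between the $a$-points lying outside the bounded region and proving the sharp radius comparison $|g_{\pm}(x)|=|q|^{\pm n/2}|x|+O(1)$, while quarantining the finitely many $a$-points in $\{|x|\le\mathcal{R}(n,q)\}$, where $\eta_{q}^{\pm n}$ is not realized by the single substitution $z\mapsto q^{\pm n/2}z$ — this bounded discrepancy being precisely what forces the $O(\log r)$ in $S_{\log}(r,f)$.
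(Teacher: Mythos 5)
Your proposal is correct and follows essentially the same route as the paper: both compare $N(r,1/((\eta_{q}^{\pm n}f)-a))$ with $N(cr,1/(f-a))$ using the near-invertibility of the shift for large $|x|$ (with the bounded region contributing the $O(\log r)$ term), and then absorb the constant radius rescaling via Lemma \ref{le3.1} and monotonicity into $S_{\log}(r,f)$. The only cosmetic difference is that the paper writes out the case $n=1$ (with the explicit bound $|\eta_{q}^{\pm 1}x|\le 2|q^{\pm 1/2}|r$ and the reverse inequality via $\eta_{q}^{-1}(\eta_{q}f)=f$) and notes the general case is similar, whereas you treat general $n$ directly.
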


\smallskip

\begin{proof}
    We only prove the case that $n=1$, the other cases can be proved similarly. Recall that $\eta_{q}^{-1}\eta_{q}x=\eta_{q}\eta_{q}^{-1}x=x$ for sufficiently large $|x|=r$ and hence $|z|$ is large enough. Since
    \begin{eqnarray*}
        |\eta_{q}^{\pm 1}x|=\left|q^{\pm 1/2}z\right|\left|\frac{1+q^{\mp 1}z^{-2}}{2}\right|\leq 2|q^{\pm 1/2}|r
    \end{eqnarray*}
    when $|x|=r$ is large enough, then by applying Lemma \ref{le3.1} to $N(r, 1/(f-a))$ and $N(r, 1/(\eta_{q}f-a))$ separately, we obtain
    \begin{eqnarray}\label{aeq3.3.1}
        N\left(r, \frac{1}{\eta_{q}f-a}\right)&\leq& N\left(2|q^{1/2}|r, \frac{1}{f-a}\right)+O(\log r)\nonumber\\
        &=&N\left(r, \frac{1}{f-a}\right)+S_{\log}(r, f)
    \end{eqnarray}
    and
    \begin{eqnarray}\label{aeq3.3.2}
        N\left(r, \frac{1}{f-a}\right)&\leq& N\left(r, \frac{1}{\eta_{q}^{-1}(\eta_{q}f)-a}\right)+O(\log r)\nonumber\\
        &\leq&N\left(2|q^{-1/2}|r, \frac{1}{\eta_{q}f-a}\right)+S_{\log}(r, f)\nonumber\\
        &=&N\left(r, \frac{1}{\eta_{q}f-a}\right)+S_{\log}(r, f),
    \end{eqnarray}
    where $r$ runs to infinity outside of a set $F$ satisfying (\ref{eq3.3}). Hence the assertion follows by combining (\ref{aeq3.3.1}) and (\ref{aeq3.3.2}).
\end{proof}

We will adopt the notation $S_{\log}(r, f)$ as in Lemma \ref{le3.3} throughout the remainder of the paper. It is worth noting that "$O$" depends on $n$ and $q$ in Lemma~\ref{le3.3}. Indeed, the dependence of the first "$O$" on $n$ and $q$ can be seen from (\ref{aeq3.2.6}) and (\ref{aeq3.2.7.1}). For the second "$O$", since we always require $|x| > \mathcal{R}(n, q)$ such that the operators $\eta_{q}^{\pm n}$ are analytic and invertible, any other points $x$ satisfying $|x| \leq \mathcal{R}(n, q)$ will be included in the second term $O(\log r)$.

The following theorem, which is also another version of \cite[Theorem 3.3]{NADV}, can be deduced from Lemma \ref{le3.3}.

\smallskip

\begin{theorem}\label{the3.4}
    Let $n\in\mathbb{N}$ and $f$ be a non-constant meromorphic function with $\sigma_{2}^{\log}(f)<1$. Then
    \begin{eqnarray}
        T(r, \mathcal{D}_{q}^{n}f)&\leq& (n+1)T(r, f)+S_{\log}(r, f),\label{eq3.4.1}\\
        T(r, \mathcal{A}_{q^{n}}f)&\leq& 2T(r, f)+S_{\log}(r, f),\label{eq3.4.1.1}\\
        T(r, (\eta_{q^{n}}f)(\eta_{q^{-n}}f))&\leq& 2T(r, f)+S_{\log}(r, f),\label{eq3.4.1.2}
    \end{eqnarray}
    where $r$ runs to infinity outside of a set $F$ satisfying (\ref{eq3.3}). In particular, these imply
    \begin{eqnarray}
        \sigma_{2}^{\log}(\mathcal{D}_{q}^{n}f)&\leq& \sigma_{2}^{\log}(f),\label{eq3.4.2}\\
        \sigma_{2}^{\log}(\mathcal{A}_{q^{n}}f)&\leq& \sigma_{2}^{\log}(f),\label{eq3.4.2.1}\\
        \sigma_{2}^{\log}((\eta_{q^{n}}f)(\eta_{q^{-n}}f))&\leq& \sigma_{2}^{\log}(f).\label{eq3.4.2.2}
    \end{eqnarray}
\end{theorem}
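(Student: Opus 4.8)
The plan is to treat each of the three characteristic estimates (\ref{eq3.4.1})--(\ref{eq3.4.1.2}) by the splitting $T(r,g)=m(r,g)+N(r,g)$, controlling the proximity part $m(r,g)$ by the Askey--Wilson logarithmic derivative estimates of Lemma \ref{le3.2} and the counting part $N(r,g)$ by the shift-invariance of counting functions in Lemma \ref{le3.3}; the hyper-order inequalities (\ref{eq3.4.2})--(\ref{eq3.4.2.2}) will then be routine consequences. Throughout, I would fix $n$, work for $|x|=r>\mathcal{R}(n,q)$ so that (\ref{equa2.1}) applies and $\eta_{q}^{\pm n}=\eta_{q^{\pm n}}$, and fold the finitely many points with $|x|\le\mathcal{R}(n,q)$ into an $O(\log r)$ term, exactly as the rest of the paper does; the exceptional sets produced are a finite union of sets of finite logarithmic measure, hence again of the form (\ref{eq3.3}).

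For (\ref{eq3.4.1}), the proximity part is immediate: $m(r,\mathcal{D}_{q}^{n}f)\le m(r,f)+m\!\left(r,\frac{\mathcal{D}_{q}^{n}f}{f}\right)\le m(r,f)+S_{\log}(r,f)$ by (\ref{aeq3.2.1}). For the counting part, the key structural fact—already visible in the proof of Lemma \ref{le3.2}—is that for $|x|>\mathcal{R}(n,q)$ one has $(\mathcal{D}_{q}^{n}f)(x)=\sum_{k=0}^{n}C_{n,k}(x)(\eta_{q}^{n-2k}f)(x)$, where the coefficients $C_{n,k}$ are sums of products of factors $1/(\eta_{q}^{j+1}x-\eta_{q}^{j-1}x)$: each application of $\mathcal{D}_{q}$ shifts $\eta_{q}^{j}$ to $\eta_{q}^{j\pm1}$, so only the $n+1$ shifts of the same parity as $n$ between $-n$ and $n$ occur, and the estimate $|1/(\eta_{q}^{j+1}x-\eta_{q}^{j-1}x)|\le 1$ for large $|z|$ shows the $C_{n,k}$ are holomorphic there and contribute no poles. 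Hence $N(r,\mathcal{D}_{q}^{n}f)\le\sum_{k=0}^{n}N(r,\eta_{q}^{n-2k}f)+O(\log r)\le (n+1)N(r,f)+S_{\log}(r,f)$ by Lemma \ref{le3.3} with $a=\infty$. Adding, $T(r,\mathcal{D}_{q}^{n}f)\le m(r,f)+(n+1)N(r,f)+S_{\log}(r,f)\le (n+1)T(r,f)+S_{\log}(r,f)$.

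The remaining two estimates follow the same template. For (\ref{eq3.4.1.1}), since $\mathcal{A}_{q^{n}}f=\tfrac12(\eta_{q}^{n}f+\eta_{q}^{-n}f)$ for $|x|>\mathcal{R}(n,q)$, estimate (\ref{aeq3.2.1.1}) gives $m(r,\mathcal{A}_{q^{n}}f)\le m(r,f)+S_{\log}(r,f)$, while $N(r,\mathcal{A}_{q^{n}}f)\le N(r,\eta_{q}^{n}f)+N(r,\eta_{q}^{-n}f)+O(\log r)\le 2N(r,f)+S_{\log}(r,f)$ by Lemma \ref{le3.3}, so $T(r,\mathcal{A}_{q^{n}}f)\le m(r,f)+2N(r,f)+S_{\log}(r,f)\le 2T(r,f)+S_{\log}(r,f)$. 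For (\ref{eq3.4.1.2}), writing $(\eta_{q^{n}}f)(\eta_{q^{-n}}f)=(\eta_{q}^{n}f)(\eta_{q}^{-n}f)$ for large $|x|$, I would use $m\!\left(r,(\eta_{q}^{n}f)(\eta_{q}^{-n}f)\right)\le m(r,f^{2})+m\!\left(r,\frac{(\eta_{q}^{n}f)(\eta_{q}^{-n}f)}{f^{2}}\right)=2m(r,f)+S_{\log}(r,f)$ by (\ref{aeq3.2.2}) together with $\log^{+}t^{2}=2\log^{+}t$, and $N\!\left(r,(\eta_{q}^{n}f)(\eta_{q}^{-n}f)\right)\le N(r,\eta_{q}^{n}f)+N(r,\eta_{q}^{-n}f)+O(\log r)\le 2N(r,f)+S_{\log}(r,f)$, giving $T(r,(\eta_{q^{n}}f)(\eta_{q^{-n}}f))\le 2T(r,f)+S_{\log}(r,f)$.

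Finally, each estimate has the shape $T(r,g)\le C\,T(r,f)+S_{\log}(r,f)=O(T(r,f))+O(\log r)$ for $r\notin F$, and since $T(r,f)\gtrsim\log r$ for every non-constant meromorphic $f$ this reads $T(r,g)=O(T(r,f))$ off $F$; taking $\log^{+}\log^{+}$, dividing by $\log\log r$, and using that $\int_{F}dt/(t\log t)<\infty$ forces, for every large $r$, some $\rho\in[r,r^{2}]\setminus F$ with $\log\log\rho/\log\log r\to1$ and $T(r,g)\le T(\rho,g)=O(T(\rho,f))$ by monotonicity, one obtains $\sigma_{2}^{\log}(g)\le\sigma_{2}^{\log}(f)$, which gives (\ref{eq3.4.2})--(\ref{eq3.4.2.2}). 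I expect the only genuinely delicate point to be securing the sharp constant $n+1$ in (\ref{eq3.4.1}): a naive recursion $T(r,\mathcal{D}_{q}^{n}f)\le 2T(r,\mathcal{D}_{q}^{n-1}f)+S_{\log}(r,\mathcal{D}_{q}^{n-1}f)$ only yields $2^{n}$, so one must exploit that $\mathcal{D}_{q}^{n}$ expands over just the $n+1$ parity-restricted shifts $\eta_{q}^{n-2k}$ with coefficients whose proximity functions vanish for large $r$; everything else is bookkeeping, modulo the usual care that the shift identities (\ref{equa2.1}) hold only for $|x|>\mathcal{R}(n,q)$.
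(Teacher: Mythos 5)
Your proposal is correct and follows essentially the same route as the paper: split $T=m+N$, bound the proximity parts by Lemma \ref{le3.2} and the counting parts by writing $\mathcal{D}_{q}^{n}f$ (resp.\ $\mathcal{A}_{q^{n}}f$, the product of shifts) as a combination of the $n+1$ parity-restricted shifts $\eta_{q}^{n-2k}f$ for $|x|>\mathcal{R}(n,q)$ and invoking Lemma \ref{le3.3}, absorbing the small-$|x|$ region into $O(\log r)$. Your explicit deduction of the hyper-order inequalities via a point $\rho\in[r,r^{2}]\setminus F$ is a correct elaboration of a step the paper leaves implicit.
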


\smallskip

\begin{proof}
    Since $(\mathcal{D}_{q}^{n}f)(x)$ is a combination of $(\eta_{q}^{n}f)(x), (\eta_{q}^{n-2}f)(x), \cdots, (\eta_{q}^{-n}f)(x)$ which has $n+1$ elements when $|x|$ is sufficiently large, then by applying Lemma \ref{le3.3}, we have
    \begin{eqnarray*}
        N(r, \mathcal{D}_{q}^{n}f)\leq \sum_{k=0}^{n}N(r, \eta_{q}^{n-2k}f)+O(\log r)=(n+1)N(r, f)+S_{\log}(r, f).
    \end{eqnarray*}
    Therefore, we obtain
    \begin{eqnarray*}
        T(r, \mathcal{D}_{q}^{n}f)&\leq& m\left(r, \frac{\mathcal{D}_{q}^{n}f}{f}\right)+N(r, \mathcal{D}_{q}^{n}f)+m(r, f)+S_{\log}(r, f)\\
        &\leq & m(r, f)+(n+1)N(r, f)+S_{\log}(r, f)\\
        &\leq&(n+1)T(r, f)+S_{\log}(r, f),
    \end{eqnarray*}
    where $r$ runs to infinity outside of a set $F$ satisfying (\ref{eq3.3}). (\ref{eq3.4.1.1}) can be obtained similarly. Moreover,
    \begin{eqnarray*}
        &&T(r, (\eta_{q^{n}}f)\cdot (\eta_{q^{-n}}f))\\
        &\leq& m\left(r, \frac{(\eta_{q^{n}}f)\cdot (\eta_{q^{-n}}f)}{f^{2}}\right)+N(r, (\eta_{q^{n}}f)\cdot (\eta_{q^{-n}}f))+m(r, f^{2})+S_{\log}(r, f)\\
        &\leq&2m(r, f)+2N(r, f)+S_{\log}(r, f)\\
        &\leq&2T(r, f)+S_{\log}(r, f),
    \end{eqnarray*}
    where $r$ runs to infinity outside of a set $F$ satisfying (\ref{eq3.3}). Thus, the assertion follows.
\end{proof}

Since we are going to use the AW operator to act on a holomorphic curve in the projective space $\mathbb{P}^{n}(\mathbb{C})$, we first prove the following theorem to ensure that the Askey-Wilson operator is well-defined in $\mathbb{P}^{n}(\mathbb{C})$.

\smallskip

\begin{theorem}\label{the3.5}
    Let $n\in\mathbb{N}$ and $f:\mathbb{C}\rightarrow\mathbb{P}^{n}(\mathbb{C}) $ be a holomorphic curve satisfying
    \begin{eqnarray}\label{eq3.5.1}
        \limsup\limits_{r\rightarrow\infty}\frac{\log^{+}\log^{+} T_{f}(r)}{\log\log r}<1.
    \end{eqnarray}
    Then, there exists a reduced representation $\textbf{f}=(f_{0}, \cdots, f_{n})$ of $f$ such that $\sigma_{2}^{\log}(f_{i})<1$ for $i=0, \cdots, n$.
\end{theorem}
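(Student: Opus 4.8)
The plan is to start from an arbitrary reduced representation $\textbf{g}=(g_{0},\dots,g_{n})$ of $f$ — reordering so that $g_{0}\not\equiv 0$, and setting aside the harmless case where some $g_{i}\equiv 0$ (this merely produces a zero coordinate in the new representation) — and to replace it by $\textbf{f}:=e^{-h_{0}}\textbf{g}$ for a suitable zero‑free entire factor $e^{-h_{0}}$. Write $\rho:=\limsup_{r\to\infty}\log^{+}\log^{+}T_{f}(r)/\log\log r<1$ by the hypothesis (\ref{eq3.5.1}) and fix any $\rho'\in(\rho,1)$; then $T_{f}(r)\le e^{(\log r)^{\rho'}}$ for all large $r$. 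The two inputs I would take from Lemma~\ref{le2.4} are $N(r,1/g_{i})\le T_{f}(r)+O(1)$ and $T(r,g_{i}/g_{0})\le T_{f}(r)+O(1)$. From the first, together with the elementary inequality $n(r,1/g_{i})\le(\log 2)^{-1}N(2r,1/g_{i})+O(1)$, the zeros of each $g_{i}$ have counting function $O(e^{(\log 2r)^{\rho'}})=o(r^{\varepsilon})$ for every $\varepsilon>0$, hence exponent of convergence $0$.

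Next I would invoke the Weierstrass/Hadamard factorization: let $P_{i}$ be the genus‑zero canonical product over the zeros of $g_{i}$ (with a factor $z^{m_{i}}$ absorbing a zero of order $m_{i}$ at the origin). By the previous step this product converges, $P_{i}$ is entire with precisely the zeros of $g_{i}$, and hence $g_{i}=e^{h_{i}}P_{i}$ for some entire $h_{i}$. The crucial estimate is on the growth of $P_{i}$: from the standard bound $\log M(r,P_{i})\le m_{i}\log r+\int_{0}^{\infty}\frac{\tilde{n}_{i}(t)\,r}{t(t+r)}\,dt$, where $M(r,\cdot)$ is the maximum modulus and $\tilde{n}_{i}(t)=n(t,1/g_{i})-m_{i}$, I split the integral at $t=r$. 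The part over $[0,r]$ is at most $\int_{0}^{r}\tilde{n}_{i}(t)/t\,dt\le N(r,1/g_{i})\le T_{f}(r)+O(1)$, while the part over $[r,\infty)$ is at most $r\int_{r}^{\infty}n(t,1/g_{i})/t^{2}\,dt$, and the bound $n(t,1/g_{i})=O(e^{(\log 2t)^{\rho'}})$ makes this $O(e^{(\log r)^{\rho'}})$ after the substitution $u=\log t$. Therefore $T(r,P_{i})\le\log^{+}M(r,P_{i})=O(e^{(\log r)^{\rho'}})$, so $\sigma_{2}^{\log}(P_{i})\le\rho'$; and likewise $\sigma_{2}^{\log}(P_{0})\le\rho'$.

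The remaining steps are bookkeeping with the characteristic function. Since $g_{i}/g_{0}=e^{h_{i}-h_{0}}P_{i}/P_{0}$, the zero‑free entire function $e^{h_{i}-h_{0}}=(g_{i}/g_{0})(P_{0}/P_{i})$ satisfies $T(r,e^{h_{i}-h_{0}})\le T(r,g_{i}/g_{0})+T(r,P_{0})+T(r,P_{i})+O(1)=O(e^{(\log r)^{\rho'}})$, using $T(r,g_{i}/g_{0})\le T_{f}(r)+O(1)$ and the previous paragraph. I then take $\textbf{f}:=e^{-h_{0}}\textbf{g}=(P_{0},\,e^{h_{1}-h_{0}}P_{1},\dots,e^{h_{n}-h_{0}}P_{n})$; since $e^{-h_{0}}$ is entire and zero‑free, $\textbf{f}$ is again a reduced representation of the same curve $f$, and for every $i$ one has $T(r,f_{i})\le T(r,e^{h_{i}-h_{0}})+T(r,P_{i})+O(1)=O(e^{(\log r)^{\rho'}})$, whence $\sigma_{2}^{\log}(f_{i})\le\rho'$. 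Letting $\rho'$ decrease to $\rho$ gives $\sigma_{2}^{\log}(f_{i})\le\rho<1$ for all $i$.

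I expect the main obstacle to be the quantitative step of the second paragraph. The subtlety is that it does not suffice to notice that $P_{i}$ has order zero: an entire function of order zero may still have logarithmic hyper‑order equal to $1$, so the usual order‑zero bookkeeping would only yield $\sigma_{2}^{\log}(f_{i})\le 1$ rather than the required strict inequality. One must carry the explicit majorant $e^{(\log r)^{\rho'}}$ all the way through the canonical‑product estimate — in particular to control the tail $r\int_{r}^{\infty}n(t,1/g_{i})/t^{2}\,dt$ — in order to keep the exponent strictly below $1$. Everything else (the factorization, the subadditivity of $T(r,\cdot)$ under products and the invariance under reciprocals, and the fact that multiplying a reduced representation by a zero‑free entire function preserves reducedness) is routine.
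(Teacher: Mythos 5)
Your proof is correct and takes essentially the same route as the paper: your multiplier $e^{-h_{0}}=P_{0}/g_{0}$ is exactly the paper's factor $h/g_{0}$ built from the genus-zero canonical product over the zeros of $g_{0}$, and your growth estimate for the canonical product (splitting the integral at $t=r$, bounding the near part by $N(r,1/g_{0})\leq T_{f}(r)+O(1)$ and carrying the explicit majorant $e^{(\log r)^{\rho'}}$ through the tail $r\int_{r}^{\infty}n(t,1/g_{0})t^{-2}\,dt$) is precisely the paper's use of Lemma \ref{le3.6} together with Lemma \ref{le2.4}. The only cosmetic difference is that you route the bookkeeping through a Hadamard factorization of every coordinate and estimate each $T(r,P_{i})$ and $T(r,e^{h_{i}-h_{0}})$ separately, whereas the paper bounds $T(r,(g_{i}/g_{0})h)$ directly (and treats the finitely-many-zeros case separately), which changes nothing of substance.
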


\smallskip

To prove the theorem, we need following lemma which is from the Remark 1 of \cite[Theorem 3.3]{GO08}.

\smallskip

\begin{lemma}\label{le3.6}
    If $h(z)$ is a Weierstrass canonical product of genus $p$, then
    \begin{eqnarray}\label{eq3.6}
        \log M(r, h)\leq C(p)\left\{r^{p}\int_{0}^{r}\frac{n(t, 1/h)}{t^{p+1}}dt+r^{p+1}\int_{r}^{\infty}\frac{n(t, 1/h)}{t^{p+2}}dt\right\}
    \end{eqnarray}
    where $M(r, h)=\max_{|z|=r}\{|h(z)|\}$. In particular, $C(0)=1$.
\end{lemma}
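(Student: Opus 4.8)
The plan is to derive the bound directly from the Weierstrass factorization of $h$, treating the genus-zero case separately so as to obtain the sharp constant. Write $h(z)=\prod_{j}E_{p}(z/a_{j})$, where $a_{j}\neq 0$ are the zeros of $h$ listed with multiplicity and $\sum_{j}|a_{j}|^{-(p+1)}<\infty$, and $E_{p}(w)=(1-w)\exp\bigl(w+\tfrac{w^{2}}{2}+\cdots+\tfrac{w^{p}}{p}\bigr)$ is the Weierstrass elementary factor. Taking logarithms of absolute values, $\log|h(z)|=\sum_{j}\log|E_{p}(z/a_{j})|$. First I would record the classical pointwise estimates for $E_{p}$: from $\log E_{p}(w)=-\sum_{k\geq p+1}w^{k}/k$ for $|w|<1$ one has $\log|E_{p}(w)|\leq A_{p}|w|^{p+1}$ whenever $|w|\leq\tfrac12$, whereas for $|w|\geq\tfrac12$ the rough bound $\log|E_{p}(w)|\leq\log(1+|w|)+\sum_{k=1}^{p}|w|^{k}/k\leq B_{p}|w|^{p}$ holds, with $A_{p}$ and $B_{p}$ depending only on $p$; in the genus-zero case $\log|E_{0}(w)|=\log|1-w|\leq\log(1+|w|)$ suffices.

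For $p=0$ I would argue directly, since this is where $C(0)=1$ comes from. For $|z|=r$ we have $\log|h(z)|\leq\sum_{j}\log(1+r/|a_{j}|)=\int_{0}^{\infty}\log(1+r/t)\,dn(t,1/h)$. Integrating by parts is legitimate because the boundary term at $0$ vanishes ($h(0)\neq0$, so $n(\cdot,1/h)$ is zero near the origin) and the one at $\infty$ vanishes ($\sum_{j}|a_{j}|^{-1}<\infty$ forces $n(t,1/h)=o(t)$), and it yields $\int_{0}^{\infty}n(t,1/h)\,\dfrac{r}{t(t+r)}\,dt$. Splitting the integral at $t=r$ and using $\dfrac{r}{t(t+r)}\leq\dfrac1t$ on $(0,r]$ together with $\dfrac{r}{t(t+r)}\leq\dfrac{r}{t^{2}}$ on $[r,\infty)$ produces exactly $\int_{0}^{r}\dfrac{n(t,1/h)}{t}\,dt+r\int_{r}^{\infty}\dfrac{n(t,1/h)}{t^{2}}\,dt$, i.e.\ the right-hand side of (\ref{eq3.6}) with $C(0)=1$.

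For $p\geq1$ I would split the zeros into those with $|a_{j}|\leq2r$ and those with $|a_{j}|>2r$, so that $|z/a_{j}|\geq\tfrac12$ in the first group and $|z/a_{j}|<\tfrac12$ in the second; the threshold $2r$ is chosen so that $|z/a_{j}|$ stays bounded away from $1$ where the first estimate is used. Applying the two pointwise bounds and rewriting the resulting sums as Stieltjes integrals gives $\log|h(z)|\leq B_{p}\,r^{p}\!\int_{0+}^{2r}\!t^{-p}\,dn(t,1/h)+A_{p}\,r^{p+1}\!\int_{2r}^{\infty}\!t^{-(p+1)}\,dn(t,1/h)$. Integrating both integrals by parts, the boundary term at $0$ in the first is zero ($h(0)\neq0$), the boundary term at $\infty$ in the second is zero by convergence of the canonical product, and the boundary term at $2r$ in the second is negative and is discarded; what survives is $p\,B_{p}\,r^{p}\!\int_{0}^{2r}n(t,1/h)t^{-(p+1)}\,dt$, $(p+1)A_{p}\,r^{p+1}\!\int_{2r}^{\infty}n(t,1/h)t^{-(p+2)}\,dt$, and the stray boundary term $B_{p}\,r^{p}\,n(2r,1/h)/(2r)^{p}$. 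Writing $\int_{0}^{2r}=\int_{0}^{r}+\int_{r}^{2r}$ and using $t^{-(p+1)}\leq 2r\,t^{-(p+2)}$ on $[r,2r]$ moves the excess into the second target integral; the stray term is dealt with by the monotonicity estimate $n(2r,1/h)\leq\bigl(\int_{2r}^{3r}t^{-(p+1)}dt\bigr)^{-1}\int_{2r}^{3r}n(t,1/h)t^{-(p+1)}\,dt$ and then $t^{-(p+1)}\leq 3r\,t^{-(p+2)}$ on $[2r,3r]$, which again lands it inside $r^{p+1}\int_{r}^{\infty}n(t,1/h)t^{-(p+2)}\,dt$. Absorbing all the $p$-dependent factors into a single constant $C(p)$ finishes the proof.

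The only step that is not purely mechanical is the treatment of the boundary term $n(2r,1/h)/(2r)^{p}$ coming out of the integration by parts: it is not directly comparable to either of the two integrals in (\ref{eq3.6}), and the fix is the monotonicity-of-counting-function trick described above. Everything else is standard: the elementary-factor estimates are classical, the convergence conditions that make the boundary terms at $0$ and $\infty$ disappear are built into the definition of a genus-$p$ canonical product, and the passage from the split-at-$2r$ integrals to the split-at-$r$ form demanded by (\ref{eq3.6}) is elementary bookkeeping.
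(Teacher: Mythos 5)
Your proof is correct. Note that the paper itself contains no proof of Lemma \ref{le3.6}: it is imported verbatim from Remark 1 of Theorem 3.3 in \cite{GO08}, so there is no in-paper argument to compare against. What you have written is a correct, self-contained reconstruction of that classical estimate, and it follows the standard route: the elementary-factor bounds $\log|E_{p}(w)|\le A_{p}|w|^{p+1}$ for $|w|\le \tfrac12$ and $\log|E_{p}(w)|\le B_{p}|w|^{p}$ for $|w|\ge \tfrac12$, a split of the zeros at $|a_{j}|=2r$, and integration by parts in the Stieltjes integrals. The two delicate points are both handled properly: the boundary terms at $0$ and $\infty$ vanish because $n(t)\equiv 0$ near the origin and $\sum_{j}|a_{j}|^{-(p+1)}<\infty$ forces $n(t)=o(t^{p+1})$ (and $n(t)=o(t)$ in the genus-zero case), and the leftover term $n(2r)/(2r)^{p}$ from the integration by parts is legitimately absorbed into $r^{p+1}\int_{r}^{\infty}n(t)t^{-(p+2)}\,dt$ by monotonicity of $n$. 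Your separate genus-zero computation, $\int_{0}^{\infty}n(t)\frac{r}{t(t+r)}\,dt$ split at $t=r$, is exactly what produces the sharp constant $C(0)=1$, which is the case the paper actually uses (for the genus-zero product $h_{2}$ in the proof of Theorem \ref{the3.5}).
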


\smallskip

\begin{proof}[Proof of Theorem \ref{the3.5}]
    Suppose $\textbf{g}=(g_{0}, \cdots, g_{n})$ is a reduced representation of $f$. If some of $g_{i}$, $i=0, \cdots, n$ have finitely many zeros, without loss of generality, we assume $g_{0}$ is this kind of $g_{i}$. If $g_{0}$ does not have any zeros, we define an auxiliary function $h_{1}(z)=1$, otherwise we assume $\{z_{k}\}_{k=1}^{\mu}$ are all $g_{0}$'s zeros, $\mu\in\mathbb{N}$, and let
    \begin{eqnarray*}
        h_{1}(z):=\prod_{k=1}^{\mu}\left(1-\frac{z}{z_{k}}\right).
    \end{eqnarray*}
    Then we have $\sigma_{2}^{\log}(h_{1})<1$. Since $h_{1}(z)$ and $g_{0}$ have the same zeros, then $h_{1}/g_{0}$ is a non-zero entire function, and
    \begin{eqnarray*}
        \textbf{f}=(f_{0}, \cdots, f_{n}):=\frac{h_{1}}{g_{0}}(g_{0}, \cdots, g_{n})=(h_{1}, \frac{g_{1}}{g_{0}}h_{1}, \cdots, \frac{g_{n}}{g_{0}}h_{1})
    \end{eqnarray*}
    is also a reduced representation of $f$.
    Then it follows by applying (\ref{equa2.3}) that
\begin{eqnarray}\label{aeq3.6.2}
    T(r, f_{i}) = T\left(r, \frac{g_{i}}{g_{0}} h_{1}\right) &\leq& T\left(r, \frac{g_{i}}{g_{0}}\right) + T(r, h_{1}) + O(1)\nonumber\\
    &\leq& T_{f}(r) + T(r, h_{1}) + O(1)
\end{eqnarray}
for $i = 0, \cdots, n$. Thus, we can  conclude that $\sigma_{2}^{\log }(f_{i}) < 1$ for all $i = 0, \cdots, n$ by combining (\ref{eq3.5.1}), (\ref{aeq3.6.2}), and the fact that $\sigma_{2}^{\log}(h_{1}) < 1$, which proves the assertion.

    Now we assume all $g_{i}$ have infinitely many zeros. We can find at least one $g_{i}$ such that $z=0$ is not the zero of $g_{i}$. Without loss of generality, we assume $g_{0}$ is this kind of $g_{i}$ and $\{z_{l}\}_{l=1}^{\infty}$ are all $g_{0}$'s zeros, counting multiplicities, and $0\not\in \{z_{l}\}_{l=1}^{\infty}$. Combining (\ref{equa2.2}) and (\ref{eq3.5.1}), we have that the exponent of convergence of $g_{0}$ is less than $1$, which implies
    \begin{eqnarray*}
        \sum_{l=1}^{\infty}\left|\frac{1}{z_{l}}\right|<\infty.
    \end{eqnarray*}
    Hence
    \begin{eqnarray*}
        h_{2}(z):=\prod_{l=1}^{\infty}\left(1-\frac{z}{z_{l}}\right)
    \end{eqnarray*}
    converges absolutely and uniformly on each bounded disc, thus $h_{2}(z)$ is a Weierstrass canonical product of genus $0$ and $h_{2}(z)$ has zeros precisely at the points $\{z_{l}\}_{l=1}^{\infty}$, that is, $n(r, 1/h_{2})\equiv n(r, 1/g_{0})$ for all $r>0$. Then it follows from Lemma \ref{le3.6} that
    \begin{eqnarray*}
        T(r, h)\leq \log M(r, h)&\leq& \int_{0}^{r}\frac{n(t, 1/h)}{t}dt+r\int_{r}^{\infty}\frac{n(t, 1/h)}{t^{2}}dt\\
        &=&N\left(r, \frac{1}{g_{0}}\right)+r\int_{r}^{\infty}\frac{n(t, 1/g_{0})}{t^{2}}dt.
    \end{eqnarray*}
    Now we estimate the second term. Let $R=r$ in (\ref{aeq3.2.3}) and apply (\ref{equa2.2}) again; we have
    \begin{eqnarray*}
        \limsup\limits_{r\rightarrow \infty}\frac{\log\log n(r, 1/g_{0})}{\log\log r}&\leq& \limsup\limits_{r\rightarrow \infty}\frac{\log\log\frac{N(r\log r, 1/g_{0})}{\log\log r}}{\log\log r}\\
        &=&\limsup\limits_{r\rightarrow \infty}\frac{\log\log N(r, 1/g_{0})}{\log\log r}:=\eta<1.
    \end{eqnarray*}
    Let $\kappa_{1}\in(\eta, 1)$ and $\kappa\in(\kappa_{1}, 1)$, then $\frac{\log\log n(r, 1/g_{0})}{\log\log r}<\kappa_{1}$ when $r$ is sufficiently large, and thus $n(r, 1/g_{0})<e^{(\log r)^{\kappa_{1}}}$. Then
    \begin{eqnarray*}
        r\int_{r}^{\infty}\frac{n(t, 1/g_{0})}{t^{2}}dt &\leq &r\int_{r}^{\infty}\frac{e^{(\log r)^{\kappa_{1}}}}{t^{2}}dt\\
        &\leq& 2r\int_{r}^{\infty}\frac{e^{(\log t)^{\kappa}}\left(1-\frac{\kappa}{(\log t)^{1-\kappa}}\right)}{t^{2}}dt\\
        &=&2r\left(-\frac{e^{(\log t)^{\kappa}}}{t}\right)\bigg|_{t=r}^{\infty}=2e^{(\log r)^{\kappa}}
    \end{eqnarray*}
    for all sufficiently large $r$. Therefore,
    \begin{eqnarray*}
        &&\limsup\limits_{r\rightarrow \infty}\frac{\log\log T(r, h)}{\log\log r}\nonumber\\
        &\leq& \max\left\{\limsup\limits_{r\rightarrow \infty}\frac{\log\log N(r, 1/g_{0})}{\log\log r}, \limsup\limits_{r\rightarrow \infty}\frac{\log\log (2e^{(\log r)^{\kappa}})}{\log\log r}\right\}\nonumber\\
        &=&\kappa<1.
    \end{eqnarray*}
    Now the assertion follows by replacing $h_{1}(z)$ by $h_{2}(z)$ in the above analysis.
\end{proof}

\section{Askey-Wilson version of Wronskian-Casorati determinant}\label{sec4}

The Wronskian determinant plays an important role in proving the Second Main Theorem for holomorphic curves in projective space. Hence, we first introduce following \textit{Askey-Wilson version of the Wronskian-Casorati determinant} with respect to meromorphic functions $f_{0}, \cdots, f_{n}$. Let
\begin{eqnarray}\label{eq4.1.2}
    \mathcal{W}(f_{0}, \cdots, f_{n})(x):=\begin{vmatrix}
    \mathcal{A}_{q^{n}}f_{0} & \cdots & \mathcal{A}_{q^{n}}f_{n} \\ 
    \mathcal{A}_{q^{n-1}}\mathcal{D}_{q}f_{0} & \cdots & \mathcal{A}_{q^{n-1}}\mathcal{D}_{q}f_{n} \\
    \vdots & & \vdots\\
    \mathcal{D}_{q}^{n}f_{0} & \cdots & \mathcal{D}_{q}^{n}f_{n}
    \end{vmatrix}(x).
\end{eqnarray}
According to the definition, we immediately have following theorem by applying Theorem \ref{pro2.3}.

\smallskip

\begin{theorem}\label{the4.1.0}
    Let $f_{0}, \cdots, f_{n}$ be meromorphic (entire) functions. Then $\mathcal{W}(f_{0}, \cdots, f_{n})$ is meromorphic (entire).
\end{theorem}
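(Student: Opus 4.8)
The plan is to reduce the statement to Proposition~\ref{pro2.3} together with the elementary fact that the entire functions form a ring and the meromorphic functions form a field, both closed under finite sums and products. Expanding the determinant in \eqref{eq4.1.2} by the Leibniz rule, $\mathcal{W}(f_{0},\dots,f_{n})$ is the finite signed sum $\sum_{\pi}\operatorname{sgn}(\pi)\prod_{k=0}^{n}\bigl(\mathcal{A}_{q^{n-k}}\mathcal{D}_{q}^{k}f_{\pi(k)}\bigr)$ taken over all permutations $\pi$ of $\{0,\dots,n\}$, where the entry in the top ($k=0$) row is $\mathcal{A}_{q^{n}}f_{\pi(0)}$ and the entry in the bottom ($k=n$) row is $\mathcal{D}_{q}^{n}f_{\pi(n)}$, i.e. $\mathcal{A}_{q^{0}}$ is read as the identity operator. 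Hence it suffices to show that every matrix entry $(\mathcal{A}_{q^{n-k}}\mathcal{D}_{q}^{k}f_{j})(x)$ is entire (resp. meromorphic) whenever $f_{j}$ is, for all $0\le k\le n$ and $0\le j\le n$.

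First I would apply Proposition~\ref{pro2.3} to $f_{j}$ with exponent $k$, which yields that $g:=\mathcal{D}_{q}^{k}f_{j}$ is entire (resp. meromorphic). Then, treating $g$ as a new input function, I would apply Proposition~\ref{pro2.3} a second time, now to the averaging operator $\mathcal{A}_{q^{n-k}}$, to conclude that $\mathcal{A}_{q^{n-k}}g$ is entire (resp. meromorphic); when $n-k=0$ this second step is vacuous since $\mathcal{A}_{q^{0}}g=g$. This settles the claim for each of the $(n+1)^{2}$ entries.

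Finally I would assemble the determinant: since each entry lies in the ring of entire functions (resp. the field of meromorphic functions), and this ring (resp. field) is stable under the finitely many additions and multiplications occurring in the Leibniz expansion, $\mathcal{W}(f_{0},\dots,f_{n})$ is entire (resp. meromorphic). I do not expect any genuine obstacle here; the only point requiring a moment's care is the bookkeeping of composition order — in \eqref{eq4.1.2} the operator $\mathcal{A}_{q^{n-k}}$ acts \emph{after} the iterated divided difference $\mathcal{D}_{q}^{k}$, so each entry is honestly an operator of the form covered by Proposition~\ref{pro2.3} applied to a function already known to lie in the desired class — together with the harmless convention $\mathcal{A}_{q^{0}}=\mathrm{id}$ used for the bottom row.
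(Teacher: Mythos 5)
Your proposal is correct and follows essentially the same route as the paper, which simply invokes Proposition~\ref{pro2.3}: each entry $\mathcal{A}_{q^{n-k}}\mathcal{D}_{q}^{k}f_{j}$ is entire (resp.\ meromorphic) by applying that proposition first to $\mathcal{D}_{q}^{k}f_{j}$ and then to the averaging operator, and the determinant is a finite sum of products of such entries. Your write-up merely makes explicit the Leibniz expansion and ring/field closure that the paper leaves implicit.
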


\smallskip

In addition, the Askey-Wilson version of the Wronskian-Casorati determinant has following properties which also can be regarded as another equivalent definition when $|x|$ is sufficiently large.

\smallskip

\begin{proposition}\label{pro4.1}
    Suppose $f_{0}, \cdots, f_{n}$ are meromorphic functions, then
    \begin{eqnarray}
    &&\mathcal{W}(f_{0}, \cdots, f_{n})(x)\nonumber\\
    &=&\begin{vmatrix}
    \eta_{q}^{\delta_{0}n}f_{0} & \cdots & \eta_{q}^{\delta_{0}n}f_{n} \\ 
    \eta_{q}^{\delta_{1}(n-1)}\mathcal{D}_{q}f_{0} & \cdots & \eta_{q}^{\delta_{1}(n-1)}\mathcal{D}_{q}f_{n} \\
    \vdots & & \vdots\\
    \eta_{q}^{\delta_{n-1}}\mathcal{D}_{q}^{n-1}f_{0} & \cdots & \eta_{q}^{\delta_{n-1}}\mathcal{D}_{q}^{n-1}f_{n} \\
    \mathcal{D}_{q}^{n}f_{0} & \cdots & \mathcal{D}_{q}^{n}f_{n}
    \end{vmatrix}(x)\label{eq4.1.1}\\
    &=&\begin{vmatrix}
    \eta_{q}^{n}f_{0} & \cdots & \eta_{q}^{n}f_{n} \\ 
    \eta_{q}^{n-2}f_{0} & \cdots & \eta_{q}^{n-2}f_{n} \\
    \vdots & & \vdots\\
    \eta_{q}^{-n}f_{0} & \cdots & \eta_{q}^{-n}f_{n}
    \end{vmatrix}(x)\cdot\prod_{i=1}^{n}\prod_{j=1}^{i}\frac{-1}{\eta_{q}^{i-2j+2}x-\eta_{q}^{i-2j}x}\label{eq4.1.3}
    \end{eqnarray}
    for all $x\in\mathbb{C}$ satisfying $|x|>\mathcal{R}(n, q)$ where $\{\delta_{i}\}_{i=0}^{n-1}\subset\{\pm 1\}$.
\end{proposition}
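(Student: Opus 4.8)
The plan is to prove (\ref{eq4.1.1}) first, in the stronger form that the determinant on its right does not depend on the chosen signs $\delta_i$, and then extract (\ref{eq4.1.3}) from the special case $\delta_i\equiv1$. Throughout I would work at a fixed $x$ with $|x|>\mathcal{R}(n,q)$, where all iterates $\eta_q^{\pm j}$, $|j|\le n$, are defined, compose additively, and satisfy $\mathcal{A}_{q^{j}}f=\tfrac12(\eta_q^{j}f+\eta_q^{-j}f)$; write $\psi$ for the function $\eta_q x-\eta_q^{-1}x$, so that by definition $(\eta_q-\eta_q^{-1})h=\psi\,\mathcal{D}_q h$, and note the identity $\psi\,\mathcal{D}_q(\eta_q^{s}h)=(\eta_q^{s}\psi)\,\eta_q^{s}\mathcal{D}_q h$ (immediate from $\mathcal{D}_q(\eta_q^{s}h)=(\eta_q^{s+1}h-\eta_q^{s-1}h)/\psi$ and $\eta_q^{s}\psi=\eta_q^{s+1}x-\eta_q^{s-1}x$). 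For $\delta\in\{\pm1\}^{n}$ let $D(\delta)$ be the determinant on the right of (\ref{eq4.1.1}), with $k$-th row $(\eta_q^{\delta_k(n-k)}\mathcal{D}_q^{k}f_j)_{0\le j\le n}$ for $0\le k\le n-1$ and last row $(\mathcal{D}_q^{n}f_j)_{j}$. Since for $k<n$ the $k$-th row of the matrix in (\ref{eq4.1.2}) is $\tfrac12(\eta_q^{n-k}\mathcal{D}_q^{k}f_j)_j+\tfrac12(\eta_q^{-(n-k)}\mathcal{D}_q^{k}f_j)_j$, while its last row already has the desired form, multilinearity of the determinant in those $n$ rows yields $\mathcal{W}(f_0,\dots,f_n)(x)=2^{-n}\sum_{\delta}D(\delta)$. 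Thus (\ref{eq4.1.1}), for every admissible sign vector, follows as soon as $D(\delta)$ is shown to be independent of $\delta$.

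For that independence I would flip the entries of $\delta$ to $+1$ one at a time, from $\delta_{n-1}$ down to $\delta_0$, so it suffices to show: if $\delta,\delta'$ agree except that $\delta_k=-1$, $\delta_k'=+1$, and $\delta_i=\delta_i'=+1$ for $i>k$, then $D(\delta)=D(\delta')$. Here $D(\delta')-D(\delta)$ is the determinant of the matrix obtained from the one for $D(\delta)$ by replacing its $k$-th row with $(\eta_q^{n-k}\mathcal{D}_q^{k}f_j-\eta_q^{-(n-k)}\mathcal{D}_q^{k}f_j)_j$; its rows $k+1,\dots,n$ are $(\eta_q^{n-i}\mathcal{D}_q^{i}f_j)_j$, $k+1\le i\le n$. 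Using the telescoping identity $\eta_q^{m}-\eta_q^{-m}=(\eta_q-\eta_q^{-1})\sum_{i=0}^{m-1}\eta_q^{m-1-2i}$ together with the two facts about $\psi$ and $\mathcal{D}_q$ noted above, the replaced row equals, with $m=n-k$,
\[
\eta_q^{n-k}\mathcal{D}_q^{k}f_j-\eta_q^{-(n-k)}\mathcal{D}_q^{k}f_j=\sum_{i=0}^{m-1}\bigl(\eta_q^{m-1-2i}\psi\bigr)\,\eta_q^{m-1-2i}\mathcal{D}_q^{k+1}f_j,
\]
the coefficients $\eta_q^{m-1-2i}\psi$ depending only on $x$. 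The core of the argument is then the following reduction lemma: \emph{for integers $l\ge1$, $i\ge0$ with $l+i\le n$, and for any meromorphic $h$, the function $\eta_q^{(n-l)-2i}\mathcal{D}_q^{l}h$ lies in the span, over meromorphic functions of $x$, of $\{\eta_q^{n-l'}\mathcal{D}_q^{l'}h:\ l\le l'\le n\}$.} I would prove this by induction on $i$: for $i=0$ the function is one of the generators, and for $i\ge1$ one uses $\eta_q^{\sigma}\mathcal{D}_q^{l}h=\eta_q^{\sigma+2}\mathcal{D}_q^{l}h-(\eta_q^{\sigma+1}\psi)\,\eta_q^{\sigma+1}\mathcal{D}_q^{l+1}h$ with $\sigma=(n-l)-2i$ (which follows from $(\eta_q-\eta_q^{-1})(\eta_q^{\sigma+1}\mathcal{D}_q^{l}h)=\psi\,\mathcal{D}_q(\eta_q^{\sigma+1}\mathcal{D}_q^{l}h)$ and the commutation identity), which replaces the given function by one with the same $l$ and $i$ decreased by $1$, plus one with $l$ replaced by $l+1$ and $i$ decreased by $1$; since $l+i$ never increases and $l=n$ forces $i=0$, the induction closes. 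Applying the lemma with $l=k+1$ (legitimate because $k+1+i\le n$ for every $i$ in the sum) exhibits the replaced $k$-th row as a combination, with $x$-dependent coefficients, of rows $k+1,\dots,n$; hence $D(\delta')-D(\delta)=0$, and (\ref{eq4.1.1}) is proved.

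Finally, for (\ref{eq4.1.3}) I would take $\delta_i\equiv1$ in (\ref{eq4.1.1}) and expand in Askey--Wilson shifts. Writing $\mathcal{D}_q^{i}f=\sum_{l=0}^{i}a_{i,l}(x)\,\eta_q^{i-2l}f$ for $|x|$ large — where $a_{i,l}$ is determined, independently of $f$, by $a_{0,0}=1$ and $a_{i,l}=(\eta_q a_{i-1,l}-\eta_q^{-1}a_{i-1,l-1})/\psi$ — one gets $\eta_q^{n-i}\mathcal{D}_q^{i}f_j=\sum_{l=0}^{i}(\eta_q^{n-i}a_{i,l})(x)\,\eta_q^{n-2l}f_j$, so the matrix in (\ref{eq4.1.1}) factors as $L\cdot N$ with $N=(\eta_q^{n-2l}f_j)_{0\le l,j\le n}$ the matrix occurring in (\ref{eq4.1.3}) and $L$ lower triangular, $L_{i,l}=(\eta_q^{n-i}a_{i,l})(x)$. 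Therefore $\mathcal{W}(f_0,\dots,f_n)=\det L\cdot\det N$ with $\det L=\prod_{i=0}^{n}(\eta_q^{n-i}a_{i,i})(x)$. The diagonal recursion $a_{i,i}=-\eta_q^{-1}a_{i-1,i-1}/\psi$, $a_{0,0}=1$, gives $a_{i,i}=(-1)^{i}\prod_{t=0}^{i-1}(\eta_q^{-t}\psi)^{-1}$, hence $\eta_q^{n-i}a_{i,i}=\prod_{t=0}^{i-1}\frac{-1}{\eta_q^{n-i-t}\psi}=\prod_{t=0}^{i-1}\frac{-1}{\eta_q^{n-i-t+1}x-\eta_q^{n-i-t-1}x}$; multiplying over $i=1,\dots,n$ (the $i=0$ factor being $1$) and reindexing — a routine check that the multiset $\{n-i-t:\ 1\le i\le n,\ 0\le t\le i-1\}$ coincides with $\{i-2j+1:\ 1\le i\le n,\ 1\le j\le i\}$ — converts $\det L$ into $\prod_{i=1}^{n}\prod_{j=1}^{i}\frac{-1}{\eta_q^{i-2j+2}x-\eta_q^{i-2j}x}$, which is (\ref{eq4.1.3}). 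I expect the reduction lemma, and the downward sign-flipping that invokes it, to be the one substantial point; everything else is bookkeeping.
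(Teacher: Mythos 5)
Your proof is correct. For the identity \eqref{eq4.1.1} your route is essentially the paper's: expand each row $\mathcal{A}_{q^{n-k}}\mathcal{D}_q^k f_j=\tfrac12(\eta_q^{n-k}+\eta_q^{-(n-k)})\mathcal{D}_q^k f_j$ by multilinearity into $2^{-n}\sum_\delta D(\delta)$, then show sign-flips leave $D(\delta)$ unchanged by writing $\eta_q^{n-k}\mathcal{D}_q^k f_j-\eta_q^{-(n-k)}\mathcal{D}_q^k f_j$ as an $x$-coefficient combination of the rows $\eta_q^{n-l'}\mathcal{D}_q^{l'}f_j$, $l'>k$, flipping from the largest index downward exactly as the paper does; the only difference is how the combination identity is established --- your telescoping plus the ``reduction lemma'' with the double index $(l,i)$ and the constraint $l+i\le n$ replaces the paper's triangular array $(a^{(\nu)}_{i,j})$ and its diagonal-versus-first-column bookkeeping, which is the same content in a cleaner inductive wrapper. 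For \eqref{eq4.1.3} you genuinely diverge: the paper proves it by a reverse induction, row-reducing the $\delta\equiv1$ determinant from the bottom and peeling off one factor $H_k(x)/H_{k+1}(x)$ per stage, whereas you factor the whole matrix as $L\cdot N$ with $N=(\eta_q^{n-2l}f_j)$ and $L$ lower triangular built from the expansion $\mathcal{D}_q^i f=\sum_l a_{i,l}(x)\,\eta_q^{i-2l}f$, and compute $\det L$ from the diagonal recursion $a_{i,i}=-\eta_q^{-1}a_{i-1,i-1}/\psi$. Your factorization argument is arguably more transparent (it exhibits the change of basis between the two determinants in one stroke and isolates all the analytic content in the single recursion for $a_{i,l}$), at the cost of the final multiset reindexing $\{n-i-t\}=\{i-2j+1\}$, which I checked and is indeed a correct, if slightly fiddly, counting identity; the paper's induction avoids any such reindexing because the factors appear already in the form displayed in \eqref{eq4.1.3}. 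Both arguments use only shifts of depth at most $n$ and the nonvanishing of $\eta_q^{j+1}x-\eta_q^{j-1}x$, so the restriction $|x|>\mathcal{R}(n,q)$ is used identically in the two proofs.
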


\smallskip

\begin{proof}
    Since $|x|>\mathcal{R}(n, q)$, then $\eta_{q}^{\pm i}x$ are analytic and invertible for all $i=1, \cdots, n$ and $\eta_{q}^{j+1}x\neq \eta_{q}^{j-1}x$ for $j=-n+1, -n+2, \cdots, n-1$. We first claim that for each $f\in\{\mathcal{D}_{q}^{i}f_{j}\}_{i, j=0}^{n}$ and $\nu=1, \cdots, n-i-1$, there exist some functions $A_{k, \nu}(x)$ only depending on $x$, $k=1, \cdots, \nu$ such that
    \begin{eqnarray}\label{aeq4.1.1}
        (\eta_{q}^{\nu}f)(x)-(\eta_{q}^{-\nu}f)(x)=\sum_{k=1}^{\nu}A_{k, \nu}(x)(\eta_{q}^{\nu-k}\mathcal{D}_{q}^{k}f)(x).
    \end{eqnarray}
    For instance, $A_{1, 1}(x)=\eta_{q}x-\eta_{q}^{-1}x$. To prove the claim (\ref{aeq4.1.1}), we define a matrix omitting $x$
    \begin{eqnarray*}
        \left(a_{i, j}^{(\nu)}\right)_{\nu\times \nu}=\left(\begin{matrix}
    \eta_{q}^{\nu-1}\mathcal{D}_{q}f &   &  &  \\ 
    \eta_{q}^{\nu-2}\mathcal{D}_{q}^{2}f & \eta_{q}^{\nu-3}\mathcal{D}_{q}f & & \\
    \vdots & \vdots & \ddots\\
    \mathcal{D}_{q}^{\nu}f & \eta_{q}^{-1}\mathcal{D}_{q}^{\nu-1}f & \cdots & \eta_{q}^{-\nu+1}\mathcal{D}_{q}f
    \end{matrix}\right),
    \end{eqnarray*}
    from which we can see that $a_{i, j}^{(\nu)}=\eta_{q}^{\nu-i-j+1}\mathcal{D}_{q}^{i-j+1}f$ when $1\leq j\leq i\leq\nu$. Note that
    \begin{eqnarray*}
        &&a_{i+1, j+1}^{(\nu)}\\
        &=&\eta_{q}^{\nu-i-j+1}\mathcal{D}_{q}^{i-j+1}f-\left(\eta_{q}^{\nu-i-j+1}\mathcal{D}_{q}^{i-j+1}f-\eta_{q}^{\nu-i-j-1}\mathcal{D}_{q}^{i-j+1}f\right)\\
        &=&a_{i, j}^{(\nu)}-\eta_{q}^{\nu-i-j}\left(\frac{\eta_{q}\mathcal{D}_{q}^{i-j+1}f-\eta_{q}^{-1}\mathcal{D}_{q}^{i-j+1}f}{\eta_{q}x-\eta_{q}^{-1}x}\right)\left(\eta_{q}^{\nu-i-j+1}x-\eta_{q}^{\nu-i-j-1}x\right)\\
        &=&a_{i, j}^{(\nu)}-a_{i+1, j}^{(\nu)}\left(\eta_{q}^{\nu-i-j+1}x-\eta_{q}^{\nu-i-j-1}x\right)
    \end{eqnarray*}
    for all $1\leq j\leq i\leq \nu-1$. This implies there is a relationship for each triangular region of $(a_{i, j}^{(\nu)})_{\nu\times \nu}$. Hence, by iteration, we can see that all elements on the diagonal can be represented by elements in the first column with some coefficients only about $x$, which means there exist some functions $B_{k, i, \nu}(x)$,$1\leq k\leq \min\{i, \nu\}$ such that $a_{i, i}^{(\nu)}=\sum_{k=1}^{i}B_{k, i, \nu}(x)a_{k,1}^{(\nu)}$, that is,
    \begin{eqnarray}\label{aeq4.1.2}
        \eta_{q}^{\nu-2i+1}\mathcal{D}_{q}f=\sum_{k=1}^{i}B_{k, i, \nu}(x)\eta_{q}^{\nu-k}\mathcal{D}_{q}^{k}f,
    \end{eqnarray}
    $i=1, \cdots, \nu$. Furthermore, we have
    \begin{eqnarray}\label{aeq4.1.3}
        \eta_{q}^{\nu}f-\eta_{q}^{-\nu}f&=&\sum_{i=1}^{\nu}(\eta_{q}^{\nu-2i+2}f-\eta_{q}^{\nu-2i}f)\nonumber\\
        &=&\sum_{i=1}^{\nu}\eta_{q}^{\nu-2i+1}\left(\frac{\eta_{q}f-\eta_{q}^{-1}f}{\eta_{q}x-\eta_{q}^{-1}x}\right)\cdot(\eta_{q}^{\nu-2i+2}x-\eta_{q}^{\nu-2i}x)\nonumber\\
        &=&\sum_{i=1}^{\nu}\eta_{q}^{\nu-2i+1}\mathcal{D}_{q}f\cdot(\eta_{q}^{\nu-2i+2}x-\eta_{q}^{\nu-2i}x).
    \end{eqnarray}
    Then the claim (\ref{aeq4.1.1}) follows by combining (\ref{aeq4.1.2}) and (\ref{aeq4.1.3}).

    Now we prove
    \begin{eqnarray}\label{aeq4.1.0}
        \begin{vmatrix}
          \eta_{q}^{n}f_{0} & \cdots & \eta_{q}^{n}f_{n} \\ 
          \eta_{q}^{n-1}\mathcal{D}_{q}f_{0} & \cdots & \eta_{q}^{n-1}\mathcal{D}_{q}f_{n} \\
          \vdots & \ddots &\vdots\\
          \eta_{q}\mathcal{D}_{q}^{n-1}f_{0} &  \cdots & \eta_{q}\mathcal{D}_{q}^{n-1}f_{n} \\
          \mathcal{D}_{q}^{n}f_{0} & \cdots & \mathcal{D}_{q}^{n}f_{n}
       \end{vmatrix}=\begin{vmatrix}
    \eta_{q}^{\delta_{0}n}f_{0} & \cdots & \eta_{q}^{\delta_{0}n}f_{n} \\ 
    \eta_{q}^{\delta_{1}(n-1)}\mathcal{D}_{q}f_{0} & \cdots & \eta_{q}^{\delta_{1}(n-1)}\mathcal{D}_{q}f_{n} \\
    \vdots & & \vdots\\
    \eta_{q}^{\delta_{n-1}}\mathcal{D}_{q}^{n-1}f_{0} & \cdots & \eta_{q}^{\delta_{n-1}}\mathcal{D}_{q}^{n-1}f_{n} \\
    \mathcal{D}_{q}^{n}f_{0} & \cdots & \mathcal{D}_{q}^{n}f_{n}
    \end{vmatrix}
    \end{eqnarray}
    for all $x\in\mathbb{C}$ satisfying $x>\mathcal{R}(n, q)$ and $\{\delta_{i}\}_{i=0}^{n-1}\subset\{\pm 1\}$. If $\delta_{0}=\cdots=\delta_{n-1}=1$, then there is nothing to prove. Thus we can assume there are some $\delta_{i}=-1$, and suppose $i^{*}\in\{0, \cdots, n-1\}$ is the largest number such that $\delta_{i^{*}}=-1$. Let the $(i^{*}+1)$-th row subtract the combination from $(i^{*}+2)$-th row to $(n+1)$-th row according to the claim (\ref{aeq4.1.1}), then we have
    \begin{eqnarray*}
        \begin{vmatrix}
    \eta_{q}^{\delta_{0}n}f_{0} & \cdots & \eta_{q}^{\delta_{0}n}f_{n} \\ 
    \vdots & & \vdots\\
    \eta_{q}^{-(n-i^{*})}\mathcal{D}_{q}^{i^{*}}f_{0} & \cdots & \eta_{q}^{-(n-i^{*})}\mathcal{D}_{q}^{i^{*}}f_{n} \\
    \vdots & & \vdots\\
    \mathcal{D}_{q}^{n}f_{0} & \cdots & \mathcal{D}_{q}^{n}f_{n}
    \end{vmatrix}=\begin{vmatrix}
    \eta_{q}^{\delta_{0}n}f_{0} & \cdots & \eta_{q}^{\delta_{0}n}f_{n} \\ 
    \vdots & & \vdots\\
    \eta_{q}^{n-i^{*}}\mathcal{D}_{q}^{i^{*}}f_{0} & \cdots & \eta_{q}^{n-i^{*}}\mathcal{D}_{q}^{i^{*}}f_{n} \\
    \vdots & & \vdots\\
    \mathcal{D}_{q}^{n}f_{0} & \cdots & \mathcal{D}_{q}^{n}f_{n}
    \end{vmatrix}.
    \end{eqnarray*}
    The procedure can be regarded in the way that we change $\delta_{i^{*}}$ from $-1$ to $1$ but will not change the value of the determinant. Since we only need the last several rows in the calculation, we can deal with the adjacent row that $\delta_{i}=-1$ with the same analysis until all $\delta_{i}=1$, $i=0, \cdots, n-1$. Hence, (\ref{aeq4.1.0}) follows.

    Now we finally consider (\ref{eq4.1.1}).
    \begin{eqnarray*}
        &&\begin{vmatrix}
    \mathcal{A}_{q^{n}}f_{0} & \cdots & \mathcal{A}_{q^{n}}f_{n} \\ 
    \vdots & & \vdots\\
    \mathcal{A}_{q}\mathcal{D}_{q}^{n-1}f_{0} & \cdots & \mathcal{A}_{q}\mathcal{D}_{q}^{n-1}f_{n} \\
    \mathcal{D}_{q}^{n}f_{0} & \cdots & \mathcal{D}_{q}^{n}f_{n}
    \end{vmatrix}\\
    &=&\frac{1}{2}\begin{vmatrix}
    \mathcal{A}_{q^{n}}f_{0} & \cdots & \mathcal{A}_{q^{n}}f_{n} \\ 
    \vdots & & \vdots\\
    \eta_{q}\mathcal{D}_{q}^{n-1}f_{0}+ \eta_{q}^{-1}\mathcal{D}_{q}^{n-1}f_{0}& \cdots & \eta_{q}\mathcal{D}_{q}^{n-1}f_{n}+\eta_{q}^{-1}\mathcal{D}_{q}^{n-1}f_{n} \\
    \mathcal{D}_{q}^{n}f_{0} & \cdots & \mathcal{D}_{q}^{n}f_{n}
    \end{vmatrix}\\
    &=&\frac{1}{2}\begin{vmatrix}
    \mathcal{A}_{q^{n}}f_{0} & \cdots & \mathcal{A}_{q^{n}}f_{n} \\ 
    \vdots & & \vdots\\
    \mathcal{A}_{q^{2}}\mathcal{D}_{q}^{n-2}f_{0} & \dots &\mathcal{A}_{q^{2}}\mathcal{D}_{q}^{n-2}f_{n}\\
    \eta_{q}\mathcal{D}_{q}^{n-1}f_{0}& \cdots & \eta_{q}\mathcal{D}_{q}^{n-1}f_{n} \\
    \mathcal{D}_{q}^{n}f_{0} & \cdots & \mathcal{D}_{q}^{n}f_{n}
    \end{vmatrix}+\frac{1}{2}\begin{vmatrix}
    \mathcal{A}_{q^{n}}f_{0} & \cdots & \mathcal{A}_{q^{n}}f_{n} \\ 
    \vdots & & \vdots\\
    \mathcal{A}_{q^{2}}\mathcal{D}_{q}^{n-2}f_{0} & \dots &\mathcal{A}_{q^{2}}\mathcal{D}_{q}^{n-2}f_{n}\\
    \eta_{q}^{-1}\mathcal{D}_{q}^{n-1}f_{0}& \cdots &\eta_{q}^{-1}\mathcal{D}_{q}^{n-1}f_{n} \\
    \mathcal{D}_{q}^{n}f_{0} & \cdots & \mathcal{D}_{q}^{n}f_{n}
    \end{vmatrix}\\
    &=&\cdots \\
    &=&\frac{1}{2^{n}}\sum_{\{\delta_{i}\}_{i=0}^{n-1}\subset\{\pm 1\}}\begin{vmatrix}
    \eta_{q}^{\delta_{0}n}f_{0} & \cdots & \eta_{q}^{\delta_{0}n}f_{n} \\ 
    \eta_{q}^{\delta_{1}(n-1)}\mathcal{D}_{q}f_{0} & \cdots & \eta_{q}^{\delta_{1}(n-1)}\mathcal{D}_{q}f_{n} \\
    \vdots & & \vdots\\
    \eta_{q}^{\delta_{n-1}}\mathcal{D}_{q}^{n-1}f_{0} & \cdots & \eta_{q}^{\delta_{n-1}}\mathcal{D}_{q}^{n-1}f_{n} \\
    \mathcal{D}_{q}^{n}f_{0} & \cdots & \mathcal{D}_{q}^{n}f_{n}
    \end{vmatrix},
    \end{eqnarray*}
    which deduces (\ref{eq4.1.1}) by following (\ref{aeq4.1.0}).
    
    For the second equality (\ref{eq4.1.3}), we first define
    \begin{eqnarray*}
        H_{k}(x)=:\prod_{i=1}^{n-k}\prod_{j=1}^{i}\frac{-1}{\eta_{q}^{i-2j+2}x-\eta_{q}^{i-2j}x}
    \end{eqnarray*}
    for $k=0, \cdots, n-1$. Now we proceed to prove
    \begin{eqnarray*}
        \mathcal{W}(f_{0}, \cdots, f_{n})=\begin{vmatrix}
    \eta_{q}^{n}f_{0} & \cdots & \eta_{q}^{n}f_{n} \\ 
    \eta_{q}^{n-1}\mathcal{D}_{q}f_{0} & \cdots & \eta_{q}^{n-1}\mathcal{D}_{q}f_{n} \\
    \vdots & & \vdots\\
    \eta_{q}^{n-k}\mathcal{D}_{q}^{k}f_{0} & \cdots &\eta_{q}^{n-k}\mathcal{D}_{q}^{k}f_{n}\\
    \eta_{q}^{n-k-2}\mathcal{D}_{q}^{k}f_{0} & \cdots &\eta_{q}^{n-k-2}\mathcal{D}_{q}^{k}f_{n}\\
    \vdots & & \vdots\\
    \eta_{q}^{-n+k}\mathcal{D}_{q}^{k}f_{0} & \cdots &\eta_{q}^{-n+k}\mathcal{D}_{q}^{k}f_{n}
    \end{vmatrix}\cdot H_{k}(x)
    \end{eqnarray*}
    for $k=n-1, \cdots, 0$ when $|x|>\mathcal{R}(n, q)$ by reverse induction. Then (\ref{eq4.1.3}) follows when $k=0$. If $k=n-1$, then from (\ref{eq4.1.1}) when $\delta_{i}=1$, $i=0, \cdots, n-1$, we have
    \begin{eqnarray*}
        \mathcal{W}(f_{0}, \dots, f_{n})&=&\begin{vmatrix}
            \eta_{q}^{n}f_{0} & \cdots &\eta_{q}^{n}f_{n}\\
            \vdots& &\vdots\\
            \eta_{q}\mathcal{D}_{q}^{n-1}f_{0}&\cdots &\eta_{q}\mathcal{D}_{q}^{n-1}f_{n}\\
            \frac{\eta_{q}\mathcal{D}_{q}^{n-1}f_{0}-\eta_{q}^{-1}\mathcal{D}_{q}^{n-1}f_{0}}{\eta_{q}x-\eta_{q}^{-1}x}& \cdots &\frac{\eta_{q}\mathcal{D}_{q}^{n-1}f_{n}-\eta_{q}^{-1}\mathcal{D}_{q}^{n-1}f_{n}}{\eta_{q}x-\eta_{q}^{-1}x}
        \end{vmatrix}\\
        &=&\begin{vmatrix}
            \eta_{q}^{n}f_{0} & \cdots &\eta_{q}^{n}f_{n}\\
            \vdots& &\vdots\\
            \eta_{q}\mathcal{D}_{q}^{n-1}f_{0}&\cdots &\eta_{q}\mathcal{D}_{q}^{n-1}f_{n}\\
            \eta_{q}^{-1}\mathcal{D}_{q}^{n-1}f_{0}& \cdots &\eta_{q}^{-1}\mathcal{D}_{q}^{n-1}f_{n}
        \end{vmatrix}\cdot\frac{-1}{\eta_{q}x-\eta_{q}^{-1}x},
    \end{eqnarray*}
    which proves the case that $k=n-1$. Now we assume the equality holds for $k+1$, then
    \begin{eqnarray*}
        &&\mathcal{W}(f_{0}, \cdots, f_{n})=\begin{vmatrix}
    \eta_{q}^{n}f_{0} & \cdots & \eta_{q}^{n}f_{n} \\ 
    \eta_{q}^{n-1}\mathcal{D}_{q}f_{0} & \cdots & \eta_{q}^{n-1}\mathcal{D}_{q}f_{n} \\
    \vdots & & \vdots\\
    \eta_{q}^{n-k}\mathcal{D}_{q}^{k}f_{0} & \cdots &\eta_{q}^{n-k}\mathcal{D}_{q}^{k}f_{n}\\
    \eta_{q}^{n-k-1}\mathcal{D}_{q}^{k+1}f_{0} & \cdots &\eta_{q}^{n-k-1}\mathcal{D}_{q}^{k+1}f_{n}\\
    \eta_{q}^{n-k-3}\mathcal{D}_{q}^{k+1}f_{0} & \cdots &\eta_{q}^{n-k-3}\mathcal{D}_{q}^{k+1}f_{n}\\
    \vdots & & \vdots\\
    \eta_{q}^{-n+k+1}\mathcal{D}_{q}^{k+1}f_{0} & \cdots &\eta_{q}^{-n+k+1}\mathcal{D}_{q}^{k+1}f_{n}
    \end{vmatrix}\cdot H_{k+1}(x)
    \end{eqnarray*}
    \begin{eqnarray*}
    &=&\begin{vmatrix}
    \eta_{q}^{n}f_{0} & \cdots & \eta_{q}^{n}f_{n} \\ 
    \eta_{q}^{n-1}\mathcal{D}_{q}f_{0} & \cdots & \eta_{q}^{n-1}\mathcal{D}_{q}f_{n} \\
    \vdots & & \vdots\\
    \eta_{q}^{n-k}\mathcal{D}_{q}^{k}f_{0} & \cdots &\eta_{q}^{n-k}\mathcal{D}_{q}^{k}f_{n}\\
    \frac{\eta_{q}^{n-k}\mathcal{D}_{q}^{k}f_{0}-\eta_{q}^{n-k-2}\mathcal{D}_{q}^{k}f_{0}}{\eta_{q}^{n-k}x-\eta_{q}^{n-k-2}x} & \cdots &\frac{\eta_{q}^{n-k}\mathcal{D}_{q}^{k}f_{n}-\eta_{q}^{n-k-2}\mathcal{D}_{q}^{k}f_{n}}{\eta_{q}^{n-k}x-\eta_{q}^{n-k-2}x}\\
    \vdots & & \vdots\\
    \eta_{q}^{-n+k+1}\mathcal{D}_{q}^{k+1}f_{0} & \cdots &\eta_{q}^{-n+k+1}\mathcal{D}_{q}^{k+1}f_{n}
    \end{vmatrix}\cdot H_{k+1}(x)\\
    %\end{eqnarray*}
    %\begin{eqnarray*}
    &=&\begin{vmatrix}
    \eta_{q}^{n}f_{0} & \cdots & \eta_{q}^{n}f_{n} \\ 
    \vdots & & \vdots\\
    \eta_{q}^{n-k}\mathcal{D}_{q}^{k}f_{0} & \cdots &\eta_{q}^{n-k}\mathcal{D}_{q}^{k}f_{n}\\
    \eta_{q}^{n-k-2}\mathcal{D}_{q}^{k}f_{0} & \cdots &\eta_{q}^{n-k-2}\mathcal{D}_{q}^{k}f_{n}\\
    \vdots & & \vdots\\
    \eta_{q}^{-n+k+1}\mathcal{D}_{q}^{k+1}f_{0} & \cdots &\eta_{q}^{-n+k+1}\mathcal{D}_{q}^{k+1}f_{n}
    \end{vmatrix}\cdot H_{k+1}(x) \frac{-1}{\eta_{q}^{n-k}x-\eta_{q}^{n-k-2}x}\\
    &=&\cdots
    \end{eqnarray*}
    \begin{eqnarray*}
    &=&\begin{vmatrix}
    \eta_{q}^{n}f_{0} & \cdots & \eta_{q}^{n}f_{n} \\ 
    \eta_{q}^{n-1}\mathcal{D}_{q}f_{0} & \cdots & \eta_{q}^{n-1}\mathcal{D}_{q}f_{n} \\
    \vdots & & \vdots\\
    \eta_{q}^{n-k}\mathcal{D}_{q}^{k}f_{0} & \cdots &\eta_{q}^{n-k}\mathcal{D}_{q}^{k}f_{n}\\
    \eta_{q}^{n-k-2}\mathcal{D}_{q}^{k}f_{0} & \cdots &\eta_{q}^{n-k-2}\mathcal{D}_{q}^{k}f_{n}\\
    \vdots & & \vdots\\
    \eta_{q}^{-n+k}\mathcal{D}_{q}^{k}f_{0} & \cdots &\eta_{q}^{-n+k}\mathcal{D}_{q}^{k}f_{n}
    \end{vmatrix}\cdot H_{k+1}(x)\prod_{j=1}^{n-k}\frac{-1}{\eta_{q}^{n-k-2j+2}x-\eta_{q}^{n-k-2j}x}.
    \end{eqnarray*}
    Then the induction step follows by
    \begin{eqnarray*}
        H_{k}(x)=H_{k+1}(x)\cdot \prod_{j=1}^{n-k}\frac{-1}{\eta_{q}^{n-k-2j+2}x-\eta_{q}^{n-k-2j}x}.
    \end{eqnarray*}
    This completes the proof.
\end{proof}

Similar to the Wronskian determinant (\cite[Proposition 1.4.3]{wronskian}), we also have the following properties for $\mathcal{W}(f_{0}, \cdots, f_{n})$.

\smallskip

\begin{proposition}\label{pro4.3.1}
    Let $f_{0}, \cdots, f_{n}, g$ be meromorphic functions and $c_{0}, \cdots, c_{n}$ be nonzero complex constants. Then, for all $x\in\mathbb{C}$ satisfying $|x|>\mathcal{R}(n, q)$, we have

    (i) $\mathcal{W}(c_{0}f_{0}, \cdots, c_{n}f_{n})(x)=c_{0}\cdots c_{n}\mathcal{W}(f_{0}, \cdots, f_{n})(x)$,

    (ii) $\mathcal{W}(1, f_{1}, \cdots, f_{n})(x)=\mathcal{W}(\mathcal{D}_{q}f_{1}, \cdots, \mathcal{D}_{q}f_{n})(x)$,

    (iii) $\mathcal{W}(f_{0}g, \cdots, f_{n}g)(x)=\prod_{k=0}^{n}(\eta_{q}^{n-2k}g)(x)\mathcal{W}(f_{0}, \cdots, f_{n})(x)$,

    (iv) $\mathcal{W}(f_{0}, \cdots, f_{n})(x)=\prod_{k=0}^{n}(\eta_{q}^{n-2k}f_{0})(x)\mathcal{W}\left(\mathcal{D}_{q}\frac{f_{1}}{f_{0}}, \cdots, \mathcal{D}_{q}\frac{f_{n}}{f_{0}}\right)(x)$.
\end{proposition}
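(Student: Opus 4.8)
The plan is to prove (i) directly from linearity, prove (ii) by a single cofactor expansion, prove (iii) from the ``pure-shift'' form \eqref{eq4.1.3} of Proposition \ref{pro4.1}, and then obtain (iv) by combining (ii) and (iii). Throughout I would keep $|x|>\mathcal{R}(n,q)$ fixed, so that the one-sided shifts $\eta_{q}^{\pm k}$ are analytic and invertible, act multiplicatively (by \eqref{equa2.1}, $(\eta_{q}^{m}(fg))(x)=(\eta_{q}^{m}f)(x)\,(\eta_{q}^{m}g)(x)$), and the three expressions \eqref{eq4.1.2}--\eqref{eq4.1.3} for $\mathcal{W}$ all coincide.

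For (i): in the defining determinant \eqref{eq4.1.2}, every entry of the $j$-th column has the form $(\mathcal{A}_{q^{n-i}}\mathcal{D}_{q}^{i}f_{j})(x)$, and both $\mathcal{D}_{q}^{i}$ and $\mathcal{A}_{q^{n-i}}$ are $\mathbb{C}$-linear; hence replacing $f_{j}$ by $c_{j}f_{j}$ scales the $j$-th column by $c_{j}$, and multilinearity of the determinant yields the factor $c_{0}\cdots c_{n}$. For (ii): apply \eqref{eq4.1.2} to $1,f_{1},\dots,f_{n}$. Since $(\mathcal{D}_{q}1)(x)=0$, we get $(\mathcal{D}_{q}^{i}1)(x)=0$ and $(\mathcal{A}_{q^{n-i}}\mathcal{D}_{q}^{i}1)(x)=0$ for $i\geq 1$, while $(\mathcal{A}_{q^{n}}1)(x)=1$, so the first column is $(1,0,\dots,0)^{\mathrm{T}}$. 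Expanding along the first column leaves the minor obtained by deleting the first row and column, whose $(i,j)$-entry (for $1\leq i\leq n$) is $(\mathcal{A}_{q^{n-i}}\mathcal{D}_{q}^{i}f_{j})(x)=(\mathcal{A}_{q^{(n-1)-(i-1)}}\mathcal{D}_{q}^{i-1}(\mathcal{D}_{q}f_{j}))(x)$; writing $g_{j}:=\mathcal{D}_{q}f_{j}$ and reindexing $i\mapsto i-1$, this minor is exactly $\mathcal{W}(g_{1},\dots,g_{n})(x)$, which is (ii).

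For (iii): I would use \eqref{eq4.1.3}, which writes $\mathcal{W}(f_{0},\dots,f_{n})(x)=\det\bigl((\eta_{q}^{n-2k}f_{j})(x)\bigr)_{k,j=0}^{n}\cdot P(x)$, where $P(x)$ depends only on $x$. Replacing each $f_{j}$ by $f_{j}g$ and using multiplicativity of $\eta_{q}^{n-2k}$ turns the $(k,j)$-entry into $(\eta_{q}^{n-2k}f_{j})(x)\,(\eta_{q}^{n-2k}g)(x)$; the scalar $(\eta_{q}^{n-2k}g)(x)$ depends only on the row index $k$, so it factors out of row $k$, giving $\prod_{k=0}^{n}(\eta_{q}^{n-2k}g)(x)$ times $\det\bigl((\eta_{q}^{n-2k}f_{j})(x)\bigr)$. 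Multiplying back by $P(x)$ and invoking \eqref{eq4.1.3} once more gives (iii). Then (iv) is immediate: writing $f_{j}=f_{0}\cdot(f_{j}/f_{0})$ and applying (iii) with $g=f_{0}$ gives $\mathcal{W}(f_{0},\dots,f_{n})=\prod_{k=0}^{n}(\eta_{q}^{n-2k}f_{0})\cdot\mathcal{W}(1,f_{1}/f_{0},\dots,f_{n}/f_{0})$, and (ii) rewrites the last factor as $\mathcal{W}(\mathcal{D}_{q}(f_{1}/f_{0}),\dots,\mathcal{D}_{q}(f_{n}/f_{0}))$.

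The only real subtlety is in (iii): one must argue from the pure-shift form \eqref{eq4.1.3} rather than from the definition \eqref{eq4.1.2}, since it is precisely in \eqref{eq4.1.3} (after absorbing an $x$-dependent factor) that each row carries a single common shift $\eta_{q}^{n-2k}$ applied to $g$, which is what lets the scalars factor row by row. This also makes transparent why the proposition is stated only for $|x|>\mathcal{R}(n,q)$: both the equivalence \eqref{eq4.1.3} and the multiplicativity $(\eta_{q}^{m}(fg))(x)=(\eta_{q}^{m}f)(x)(\eta_{q}^{m}g)(x)$ require the shifts to be realized via \eqref{equa2.1}, which holds only for $|z|>|q|^{-n/2}$. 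Everything else is routine determinant bookkeeping.
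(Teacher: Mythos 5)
Your proposal is correct and follows essentially the same route as the paper: (i) and (ii) by linearity of $\mathcal{A}_{q^{k}}$, $\mathcal{D}_{q}$ and routine determinant manipulations, (iii) by factoring the rows in the pure-shift form \eqref{eq4.1.3}, and (iv) by applying (iii) with $g=f_{0}$ and then (ii), exactly as in the paper's proof. The only difference is that you spell out the cofactor expansion for (ii) and the row-factoring for (iii), which the paper leaves as "basic calculation rules" and "following \eqref{eq4.1.3}".
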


\smallskip

\begin{proof}
For (i) and (ii), there is nothing to prove as they follow from basic calculation rules for determinants. For (iii), it can also be easily obtained by following the equality (\ref{eq4.1.3}), so we omit the details here. For (iv), it follows from (ii) and (iii) that
\begin{eqnarray*}
    \mathcal{W}(f_{0}, \cdots, f_{n})&=&(\eta_{q}^{n}f_{0})(\eta_{q}^{n-2}f_{0})\cdots (\eta_{q}^{-n}f_{0})\mathcal{W}\left(1, \frac{f_{1}}{f_{0}}, \cdots, \frac{f_{n}}{f_{0}}\right)\\
    &=&(\eta_{q}^{n}f_{0})(\eta_{q}^{n-2}f_{0})\cdots (\eta_{q}^{-n}f_{0})\mathcal{W}\left(\mathcal{D}_{q}\frac{f_{1}}{f_{0}}, \cdots, \mathcal{D}_{q}\frac{f_{n}}{f_{0}}\right).
\end{eqnarray*}
\end{proof}

\smallskip

\begin{lemma}\label{le4.4.1}

Let $f_{0}, \cdots, f_{n}$ be linearly independent entire functions. Then we have $\mathcal{W}(f_{0}, \cdots, f_{n})(x) \not\equiv 0$.
\end{lemma}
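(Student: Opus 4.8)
The plan is to argue by induction on $n$, imitating the classical Wronskian argument with the derivative replaced by $\mathcal{D}_{q}$. For $n=0$ the statement is simply $\mathcal{W}(f_{0})=f_{0}\not\equiv 0$. For the inductive step, normalize so that $f_{0}\not\equiv 0$ and invoke Proposition \ref{pro4.3.1}(iv): for every $x$ with $|x|>\mathcal{R}(n,q)$,
\[
\mathcal{W}(f_{0},\dots,f_{n})(x)=\Bigl(\prod_{k=0}^{n}(\eta_{q}^{n-2k}f_{0})(x)\Bigr)\,\mathcal{W}\!\left(\mathcal{D}_{q}\tfrac{f_{1}}{f_{0}},\dots,\mathcal{D}_{q}\tfrac{f_{n}}{f_{0}}\right)(x).
\]
By (\ref{equa2.1}) each factor $(\eta_{q}^{n-2k}f_{0})(x)$ equals $f_{0}$ evaluated at a shifted point on $\{|x|>\mathcal{R}(n,q)\}$ and is hence $\not\equiv 0$ there, while $\mathcal{W}(f_{0},\dots,f_{n})$ is entire by Theorem \ref{the4.1.0}; so it suffices to show $\mathcal{W}(g_{1},\dots,g_{n})\not\equiv 0$ for the $n$ functions $g_{i}:=\mathcal{D}_{q}(f_{i}/f_{0})$.

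To place the $g_{i}$ inside the induction hypothesis I would clear denominators. By Theorem \ref{the2.1}(ii) and Proposition \ref{pro2.3}, $g_{i}=P_{i}/Q$ with $Q:=(\eta_{q}f_{0})(\eta_{q}^{-1}f_{0})$ and $P_{i}:=(\mathcal{A}_{q}f_{0})(\mathcal{D}_{q}f_{i})-(\mathcal{A}_{q}f_{i})(\mathcal{D}_{q}f_{0})$ all entire; then Proposition \ref{pro4.3.1}(iii) (with $n-1$ in place of $n$), applied with common factor $1/Q$, yields $\mathcal{W}(g_{1},\dots,g_{n})=\bigl(\prod_{k=0}^{n-1}1/(\eta_{q}^{n-1-2k}Q)\bigr)\mathcal{W}(P_{1},\dots,P_{n})$ with the prefactor $\not\equiv 0$. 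Thus everything reduces to showing that $P_{1},\dots,P_{n}$ are linearly independent entire functions and then invoking the induction hypothesis on them.

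The linear independence of $P_{1},\dots,P_{n}$ is the crux, and I expect it to be the main obstacle. A relation $\sum_{i}c_{i}P_{i}\equiv 0$ is equivalent, after dividing by $Q$ and using linearity of $\mathcal{D}_{q}$, to $\mathcal{D}_{q}(h/f_{0})\equiv 0$ with $h:=\sum_{i\ge 1}c_{i}f_{i}$, so one must control $\ker\mathcal{D}_{q}$. If the argument of $\mathcal{D}_{q}$ were \emph{entire} this would close at once: writing $g(x)=\varphi((w+w^{-1})/2)$ in the spectral variable $w$, $\mathcal{D}_{q}g\equiv 0$ forces $\varphi(qw)=\varphi(w)$ for $|w|$ large and hence, by analytic continuation, on all of $\mathbb{C}^{\ast}$, and a $q$-periodic function holomorphic on $\mathbb{C}^{\ast}$ induces a holomorphic function on the compact torus $\mathbb{C}^{\ast}/q^{\mathbb{Z}}$, hence is constant, giving $h=cf_{0}$ against linear independence. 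The genuine difficulty is that $h/f_{0}$ is only \emph{meromorphic}, and on meromorphic functions $\ker\mathcal{D}_{q}$ is the much larger field of Askey--Wilson-periodic ($q$-elliptic) functions. Overcoming this seems to require either strengthening the hypothesis to linear independence over $\ker\mathcal{D}_{q}$ (the natural Askey--Wilson non-degeneracy condition, under which the induction above goes through), or an additional argument exploiting $\sigma_{2}^{\log}(f_{i})<1$ together with the zero distribution of $f_{0}$ to exclude a non-constant $q$-elliptic factor---for instance, the observation that such a factor would force $f_{0}$ to vanish along a full $q$-orbit. This is the step on which I would expect to spend most of the effort.
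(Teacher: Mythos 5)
Your inductive strategy (Proposition \ref{pro4.3.1}(iv) plus clearing denominators) is not the route the paper takes, and, as you yourself concede, it does not close. The inductive hypothesis needs $P_{1},\dots,P_{n}$ to be linearly independent over $\mathbb{C}$, but a relation $\sum_{i}c_{i}P_{i}\equiv 0$ only yields $h/f_{0}\in\ker\mathcal{D}_{q}$ with $h=\sum_{i}c_{i}f_{i}$, and on \emph{meromorphic} functions $\ker\mathcal{D}_{q}$ is the field of Askey--Wilson-periodic functions, which contains non-constant elements: any meromorphic $\phi$ on $\mathbb{C}^{*}$ with $\phi(qw)=\phi(w)$ and $\phi(1/w)=\phi(w)$ (a symmetrized elliptic function for the multiplicative lattice $q^{\mathbb{Z}}$) descends to such an $h(x)$. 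So linear independence of $f_{0},\dots,f_{n}$ over $\mathbb{C}$ alone does not exclude the relation, and neither of your proposed rescues is carried out; moreover the growth-based one looks unpromising, since an AW-periodic meromorphic function has $T(r)=O((\log r)^{2})$ and hence $\sigma_{2}^{\log}=0<1$, so it is not filtered out by the standing hypotheses of the paper. As written, your argument proves the lemma only under the stronger hypothesis of linear independence over $\ker\mathcal{D}_{q}$; for the statement as given it stops exactly at the point you flag.

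The paper's own proof is direct, with no induction: assuming $\mathcal{W}(f_{0},\dots,f_{n})\equiv 0$, it invokes (\ref{eq4.1.3}) to assert the existence of constants $a_{0},\dots,a_{n}$, not all zero, with $\sum_{j}a_{j}(\eta_{q}^{n-2k}f_{j})(x)=0$ for all large $|x|$ and all $k$; then $g=\sum_{j}a_{j}f_{j}$ satisfies $g\equiv 0$ (for $n$ even) or $(\eta_{q}g)\equiv 0$ (for $n$ odd, after the substitution $t=\eta_{q}x$) outside a disc, so Liouville's theorem gives $g\equiv 0$, contradicting linear independence over $\mathbb{C}$. Note, however, that the passage from the pointwise singularity of the matrix $\bigl(\eta_{q}^{n-2k}f_{j}(x)\bigr)$ to an $x$-\emph{independent} null vector is stated without justification, and it is exactly where your difficulty resurfaces: a non-constant AW-periodic ratio $f_{1}/f_{0}$ would make the kernel vector genuinely $x$-dependent. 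So your diagnosis of the crux is accurate and not an artifact of your approach; what your write-up lacks is any completed treatment of that step, and what the paper supplies in its place is a one-line assertion rather than an argument.
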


\smallskip

\begin{proof}
    Assume the assertion is not valid, then we have $\mathcal{W}(f_{0}, \cdots, f_{n})(x)\equiv 0$ for all $x\in\mathbb{C}$ satisfying $|x|>\mathcal{R}(n, q)$.  Hence, from (\ref{eq4.1.3}), we know that
    \begin{eqnarray*}
        \left(\begin{matrix}
    \eta_{q}^{n}f_{0} & \cdots & \eta_{q}^{n}f_{n} \\ 
    \eta_{q}^{n-2}f_{0} & \cdots & \eta_{q}^{n-2}f_{n} \\
    \vdots & & \vdots\\
    \eta_{q}^{-n}f_{0} & \cdots & \eta_{q}^{-n}f_{n}
    \end{matrix}\right) \left(\begin{matrix}
    a_{0}\\ 
    a_{1}\\
    \vdots\\
    a_{n}
    \end{matrix}\right)=\left(\begin{matrix}
    0\\ 
    0\\
    \vdots\\
    0
    \end{matrix}\right)
    \end{eqnarray*}
    has a non-trivial solution $a_{0}, \cdots, a_{n}\in\mathbb{C}$ for all $x\geq \mathcal{R}(n, q)$.
    
    Let $g(x) = a_{0} f_{0}(x) + \cdots + a_{n} f_{n}(x)$; then $g$ is entire. If $n$ is even, we have $g(x) \equiv 0$ for all $x \in \mathbb{C}$ satisfying $|x| \geq \mathcal{R}(n, q)$. If $n$ is odd, then $(\eta_{q} g)(x) \equiv 0$ when $|x|$ is sufficiently large. Let $t = \eta_{q} x$; then $g(t) \equiv 0$ for all $|t| \geq \mathcal{R}'(n, q)$. This implies that the entire function $g(x)$ is bounded when $|x|$ is large enough, and by Liouville's Theorem, we have $g(x) = a_{0} f_{0}(x) + \cdots + a_{n} f_{n}(x) \equiv 0$ for all $x \in \mathbb{C}$, which contradicts the assumption that $f_{0}, \cdots, f_{n}$ are linearly independent. Thus, the assertion follows.
\end{proof}

\section{A general form of SMT for Askey-Wilson operator}\label{sec5}

As mentioned in the introduction, the general form of SMT plays a crucial role in certain problems. Here, we present the following Askey-Wilson version of the general form of SMT.

\smallskip

\begin{theorem}[Askey-Wilson version of the general form of SMT]\label{the5.1}
    Let $H_{1}, \ldots, H_{p}$ be arbitrary hyperplanes in $\mathbb{P}^{n}(\mathbb{C})$ with defining linear forms $L_{1}, \ldots, L_{p}$. Let $f:\mathbb{C} \to \mathbb{P}^{n}(\mathbb{C})$ be a holomorphic map with a reduced representation $\mathbf{f} = (f_{0}, \ldots, f_{n})$, where $f_{0}, \ldots, f_{n}$ are linearly independent, and let
    \begin{eqnarray}\label{eq5.1}
        \limsup_{r \to \infty}\frac{\log^{+}\log^{+} T_{f}(r)}{\log\log r} < 1.
    \end{eqnarray}
    Then,
    \begin{equation}\label{eq5.2}
    \int_{0}^{2\pi} \max_{\mathcal{K}} \sum_{j \in \mathcal{K}} \log \frac{\lVert \mathbf{f}(re^{i\theta}) \rVert}{|L_{j}(\mathbf{f})(re^{i\theta})|} \frac{d\theta}{2\pi} 
    + N_{\mathcal{W}}(r, 0) 
    \leq (n+1)T_{f}(r) + S_{\log}(r, f),
    \end{equation}
    where $r$ tends to infinity outside of an exceptional set $F$ satisfying (\ref{eq3.3}), $\mathcal{W} = \mathcal{W}(f_{0}, \ldots, f_{n})$, $N_{\mathcal{W}}(r, 0)=N(r, 1/\mathcal{W})$ and the maximum is taken over all subsets $\mathcal{K}\subset\{1, \cdots, p\}$ such that the linear forms $L_{j}$, $j \in \mathcal{K}$, are linearly independent.
\end{theorem}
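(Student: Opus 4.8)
The plan is to adapt the Cartan--Vojta--Ru argument for the general form of the Second Main Theorem, with the classical Wronskian replaced by $\mathcal{W}=\mathcal{W}(f_{0},\ldots,f_{n})$ and the logarithmic derivative lemma replaced by Lemma~\ref{le3.2} in the form of the shift estimate~(\ref{aeq3.2.7}). First I would record the basic facts about $F_{j}:=L_{j}(\mathbf{f})$: each $F_{j}$ is entire, not identically zero (a nonzero linear form applied to the linearly independent $f_{0},\ldots,f_{n}$), satisfies $|F_{j}(z)|\le C\|\mathbf{f}(z)\|$ for a constant $C\ge 1$ depending only on the $L_{j}$, and, by Lemma~\ref{le2.4}, has $T(r,F_{j})=O(T_{f}(r))+O(1)$, hence $\sigma_{2}^{\log}(F_{j})<1$. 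Adjoining finitely many auxiliary hyperplanes only enlarges the left-hand side of~(\ref{eq5.2}) while leaving $T_{f}$ and $N_{\mathcal{W}}(r,0)$ untouched, so I may assume $L_{1},\ldots,L_{p}$ span the dual space, and therefore every linearly independent subset extends to a basis drawn from $\{L_{1},\ldots,L_{p}\}$.

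The next step is to reduce the maximum over $\mathcal{K}$ to bases of size $n+1$: for each $\theta$, choose a maximizing subset $\mathcal{K}(\theta)$ and complete it to $\widetilde{\mathcal{K}}(\theta)\supseteq\mathcal{K}(\theta)$ with $\{L_{j}\}_{j\in\widetilde{\mathcal{K}}(\theta)}$ a basis; since $\log(\|\mathbf{f}\|/|F_{j}|)\ge -\log C$ for every $j$ and $\theta\mapsto\widetilde{\mathcal{K}}(\theta)$ is a finite-valued measurable function, this replacement costs only $O(1)$ inside the integral. For a fixed basis $\mathcal{K}=\{j_{0},\ldots,j_{n}\}$, the key algebraic input is that $\mathcal{W}$ is multilinear and alternating in $(f_{0},\ldots,f_{n})$ (each entry $\mathcal{A}_{q^{n-i}}\mathcal{D}_{q}^{i}f_{l}$ is linear in $f_{l}$), so that $\mathcal{W}(F_{j_{0}},\ldots,F_{j_{n}})(x)=\det(M_{\mathcal{K}})\,\mathcal{W}(f_{0},\ldots,f_{n})(x)$ with $M_{\mathcal{K}}$ an invertible constant matrix; both sides being entire (Theorem~\ref{the4.1.0}), this identity, proved for $|x|>\mathcal{R}(n,q)$, extends to all $x$. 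Using it I would decompose, for $\mathcal{K}=\widetilde{\mathcal{K}}(\theta)$,
\[
\sum_{j\in\mathcal{K}}\log\frac{\|\mathbf{f}\|}{|F_{j}|}
=\log\frac{\|\mathbf{f}\|^{\,n+1}}{|\mathcal{W}|}
+\log\frac{|\mathcal{W}(F_{j_{0}},\ldots,F_{j_{n}})|}{|F_{j_{0}}\cdots F_{j_{n}}|}
-\log|\det M_{\mathcal{K}}|
\]
and integrate over $\theta$.

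For the first term, Jensen's formula applied to the entire function $\mathcal{W}\not\equiv 0$ (Lemma~\ref{le4.4.1}) gives $\int_{0}^{2\pi}\log(\|\mathbf{f}\|^{\,n+1}/|\mathcal{W}|)\,d\theta/2\pi=(n+1)T_{f}(r)-N_{\mathcal{W}}(r,0)+O(1)$; the third term is $O(1)$ because only finitely many nonzero constants $\det M_{\mathcal{K}}$ occur. For the second term I would invoke~(\ref{eq4.1.3}): factoring $F_{j_{l}}(x)$ out of the $l$-th column of the determinant appearing there yields, for $|x|>\mathcal{R}(n,q)$,
\[
\frac{\mathcal{W}(F_{j_{0}},\ldots,F_{j_{n}})(x)}{F_{j_{0}}(x)\cdots F_{j_{n}}(x)}
=\det\!\left(\frac{(\eta_{q}^{\,n-2i}F_{j_{l}})(x)}{F_{j_{l}}(x)}\right)_{0\le i,l\le n}\cdot\prod_{i=1}^{n}\prod_{j=1}^{i}\frac{-1}{\eta_{q}^{\,i-2j+2}x-\eta_{q}^{\,i-2j}x},
\]
where the double product has vanishing proximity function for large $|x|$ (as in the proof of Lemma~\ref{le3.2}) and each determinant entry satisfies $m(r,(\eta_{q}^{\,n-2i}F_{j_{l}})/F_{j_{l}})=O(T(r,F_{j_{l}})/\log\log r)=S_{\log}(r,f)$ by~(\ref{aeq3.2.7}); bounding the $\theta$-dependent integrand by the finite sum over all bases $\mathcal{K}$ then makes the second term $S_{\log}(r,f)$. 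Collecting the three contributions and rearranging yields~(\ref{eq5.2}). I expect the main obstacle to be the bookkeeping around the $\theta$-dependent set $\widetilde{\mathcal{K}}(\theta)$ — its measurability and the $O(1)$ cost of the completion, the need for a single exceptional set (from Lemma~\ref{le3.1} and~(\ref{aeq3.2.7})) valid for all finitely many index sets at once — together with making sure that the multilinearity identity for $\mathcal{W}$, established only on $\{|x|>\mathcal{R}(n,q)\}$, genuinely propagates to an identity of entire functions so that the zero-counting term $N_{\mathcal{W}}(r,0)$ is correctly accounted for.
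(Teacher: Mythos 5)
Your overall architecture (reduction to bases of size $n+1$, the constant-multiple identity $\mathcal{W}(L_{j_{0}}(\mathbf{f}),\ldots,L_{j_{n}}(\mathbf{f}))=\det(M_{\mathcal{K}})\,\mathcal{W}(f_{0},\ldots,f_{n})$, and Jensen applied to the entire function $\mathcal{W}$ to produce $(n+1)T_{f}(r)-N_{\mathcal{W}}(r,0)$) coincides with the paper's, but there is a genuine gap in your treatment of the middle term. You assert that, by Lemma~\ref{le2.4}, $T(r,F_{j})=O(T_{f}(r))+O(1)$ for $F_{j}=L_{j}(\mathbf{f})$, hence $\sigma_{2}^{\log}(F_{j})<1$, and you then conclude $m\bigl(r,(\eta_{q}^{\,n-2i}F_{j_{l}})/F_{j_{l}}\bigr)=O\bigl(T(r,F_{j_{l}})/\log\log r\bigr)=S_{\log}(r,f)$ via~(\ref{aeq3.2.7}). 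Lemma~\ref{le2.4} does not give this: it bounds only $N(r,1/f_{i})$ and $T(r,f_{i}/f_{j})$ by $T_{f}(r)$, not $T(r,f_{i})$ or $m(r,F_{j})$. Indeed these can be of completely different size: for the reduced representation $\mathbf{f}=(e^{z^{m}},\,z\,e^{z^{m}})$ of the curve $[1:z]$ one has $T_{f}(r)=\log r+O(1)$, while $T(r,L(\mathbf{f}))\asymp r^{m}$ for $L=x_{0}$ or $x_{1}$. For such a representation $\sigma_{2}^{\log}(F_{j})<1$ fails, so Lemma~\ref{le3.2}/(\ref{aeq3.2.7}) cannot even be applied to $F_{j}$, and even where it could be, $O(T(r,F_{j})/\log\log r)$ is not $S_{\log}(r,f)$. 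Since the theorem is stated for an arbitrary reduced representation satisfying only~(\ref{eq5.1}), this step breaks down; note also that passing to the good representation of Theorem~\ref{the3.5} does not by itself repair it, because that theorem controls $\sigma_{2}^{\log}(f_{i})$ but not $T(r,f_{i})$ in terms of $T_{f}(r)$.

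The paper circumvents exactly this difficulty by writing $L_{j}(\mathbf{f})=f_{0}\,L_{j}(\mathbf{g})$ with $\mathbf{g}=(1,f_{1}/f_{0},\ldots,f_{n}/f_{0})$ and splitting the shift quotients accordingly: the $L_{j}(\mathbf{g})$-part has $T(r,L_{j}(\mathbf{g}))=O(T_{f}(r))$ by Lemma~\ref{le2.4}, so the logarithmic derivative lemma applies and yields $S_{\log}(r,f)$; the $f_{0}$-part is \emph{not} estimated through proximity functions at all, but the symmetric shifts are paired into $(\eta_{q^{m}}f_{0})(\eta_{q^{-m}}f_{0})/f_{0}^{2}$ and the plain $\log$ (not $\log^{+}$) is integrated, so by Jensen it becomes a difference of integrated counting functions, which is $S_{\log}(r,f)$ because $N(r,1/f_{0})\le T_{f}(r)+O(1)$ and counting functions are essentially shift-invariant (the Lemma~\ref{le3.3} argument). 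To make your proof correct you would need to insert this factorization (or an equivalent device) before estimating the determinant entries; as written, the error-term estimate is unjustified and false in general.
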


\smallskip

\begin{proof}

Without loss of generality, we may assume that $p\geq n+1$ and $\#\mathcal{K}=n+1$. Let $E$ be the set of all injective maps $\mu: \{0, 1, \ldots, n\} \to \{1, \ldots, p\}$ such that $L_{\mu(0)}, \ldots, L_{\mu(n)}$ are linearly independent. Then, there exists a constant $A_{\mu} \in \mathbb{C} \setminus \{0\}$ such that
\[
\mathcal{W}(L_{\mu(0)}(\mathbf{f}), \ldots, L_{\mu(n)}(\mathbf{f})) = A_{\mu} \cdot \mathcal{W}(f_{0}, \ldots, f_{n}).
\]
Furthermore, from Lemma \ref{le4.4.1}, we have $\mathcal{W}(f_{0}, \ldots, f_{n}) \not\equiv 0$. Thus, when $r = |x|$ is sufficiently large, by applying (iv) in Proposition \ref{pro4.3.1}, we obtain
    \begin{eqnarray}\label{aeq5.1.1}
        &&\int_{0}^{2\pi}\max\limits_{\mathcal{K}}\sum\limits_{j\in\mathcal{K}}\log\frac{\lVert\textbf{f}(re^{i\theta})\rVert}{|L_{j}(\textbf{f})(re^{i\theta})|}\frac{d\theta}{2\pi}\nonumber\\
        &=&\int_{0}^{2\pi}\max\limits_{\mu\in E}\sum_{j=0}^{n}\log\frac{\lVert\textbf{f}(re^{i\theta})\rVert}{|L_{\mu(j)}(\textbf{f}(re^{i\theta}))|}\frac{d\theta}{2\pi}\nonumber\\
        &=&\int_{0}^{2\pi}\log\left(\max\limits_{\mu\in E}\left\{\frac{\lVert\textbf{f}(re^{i\theta})\rVert^{n+1}}{\prod_{j=0}^{n}|L_{\mu(j)}(\textbf{f}(re^{i\theta}))|}\right\}\right)\frac{d\theta}{2\pi}\nonumber\\
        &\leq&\int_{0}^{2\pi}\log\left(\max\limits_{\mu\in E}\frac{|\mathcal{W}(L_{\mu(0)}(\textbf{f}), \cdots, L_{\mu(n)}(\textbf{f}))|}{|L_{\mu(0)}(\textbf{f})\cdots L_{\mu(n)}(\textbf{f})|}(re^{i\theta})\right)\frac{d\theta}{2\pi}\nonumber\\
        &&+\int_{0}^{2\pi}\log\frac{\lVert\textbf{f}(re^{i\theta})\rVert^{n+1}}{|\mathcal{W}(f_{0}, \cdots, f_{n})(re^{i\theta})|}\frac{d\theta}{2\pi}+O(1)\nonumber\\
        &\leq&\int_{0}^{2\pi}\log\left(\max_{\mu\in E}\frac{|[\eta_{q}^{n}L_{\mu(0))}(\textbf{f})]\cdots [\eta_{q}^{-n}L_{\mu(0)}(\textbf{f})]|}{|L_{\mu(0)}(\textbf{f})\cdots L_{\mu(0)}(\textbf{f})|}(re^{i\theta})\right)\frac{d\theta}{2\pi}\nonumber\\
        &&+\int_{0}^{2\pi}\log\left(\max_{\mu\in E}\frac{\left|\mathcal{W}\left(\mathcal{D}_{q}\frac{L_{\mu(1)}(\textbf{f})}{L_{\mu(0)}(\textbf{f})}, \cdots, \mathcal{D}_{q}\frac{L_{\mu(n)}(\textbf{f})}{L_{\mu(0)}(\textbf{f})}\right)\right|}{\left|\frac{L_{\mu(1)}(\textbf{f})}{L_{\mu(0)}(\textbf{f})}\cdots \frac{L_{\mu(n)}(\textbf{f})}{L_{\mu(0)}(\textbf{f})}\right|}(re^{i\theta})\right)\frac{d\theta}{2\pi}\nonumber\\
        &&+\int_{0}^{2\pi}\log\frac{\lVert\textbf{f}(re^{i\theta})\rVert^{n+1}}{|\mathcal{W}(f_{0}, \cdots, f_{n})(re^{i\theta})|}\frac{d\theta}{2\pi}+O(1)\nonumber\\
        &=:&I_{1}+I_{2}+I_{3}+O(1).
    \end{eqnarray}
In the second inequality, we used the basic fact that 
\[
\log \max_{1 \leq k \leq m}\{a_{k}\} \leq \log \max_{1 \leq k \leq m}\left\{\frac{a_{k}}{b_{k}}\right\} + \log \max_{1 \leq k \leq m}\{b_{k}\}
\]
for $a_{k}, b_{k} \in \mathbb{R}^{+}$. Now, we first consider $I_{1}$. Suppose 
\[
\mathbf{g} = (g_{0}, \ldots, g_{n}) := (1, f_{1}/f_{0}, \ldots, f_{n}/f_{0}).
\]
Then, for any linear form $L$, we have 
$
L(\mathbf{f}) = f_{0}L(\mathbf{g}).
$
Hence,
    \begin{eqnarray}\label{aeq5.1.2}
        I_{1}&\leq&\int_{0}^{2\pi}\log\sum_{\mu\in E} \prod_{k=0}^{\lfloor n/2\rfloor}\left|\frac{(\eta_{q}^{n-2k}L_{\mu(0)}(\textbf{f}))(\eta_{q}^{-n+2k}L_{\mu(0)}(\textbf{f}))}{L_{\mu(0)}^{2}(\textbf{f})}\right|\frac{d\theta}{2\pi}\nonumber\\
        &=&\int_{0}^{2\pi}\log\prod_{k=0}^{\lfloor n/2\rfloor}\left|\frac{(\eta_{q}^{n-2k}f_{0})(\eta_{q}^{-n+2k}f_{0})}{f_{0}^{2}}\right|\frac{d\theta}{2\pi}\nonumber\\
        &&+\int_{0}^{2\pi}\log\sum_{\mu\in E}\prod_{k=0}^{\lfloor n/2\rfloor}\left|\frac{(\eta_{q}^{n-2k}L_{\mu(0)}(\textbf{g}))(\eta_{q}^{-n+2k}L_{\mu(0)}(\textbf{g}))}{L_{\mu(0)}^{2}(\textbf{g})}\right|\frac{d\theta}{2\pi}\nonumber\\
        &\leq&\sum_{k=0}^{\lfloor n/2\rfloor}\int_{0}^{2\pi}\log\left|\frac{(\eta_{q}^{n-2k}f_{0})(\eta_{q}^{-n+2k}f_{0})}{f_{0}^{2}}\right|\frac{d\theta}{2\pi}\nonumber\\
        &&+\sum_{\mu\in E}\sum_{k=0}^{\lfloor n/2\rfloor}\int_{0}^{2\pi}\log^{+}\left|\frac{(\eta_{q}^{n-2k}L_{\mu(0)}(\textbf{g}))(\eta_{q}^{-n+2k}L_{\mu(0)}(\textbf{g}))}{L_{\mu(0)}^{2}(\textbf{g})}\right|\frac{d\theta}{2\pi}+O(1)\nonumber\\
        &=:&I_{11}+I_{12}+O(1).
    \end{eqnarray}
Due to $\sigma_{2}^{\log}(f) < 1$, it follows from (\ref{equa2.2}) that 
\[
\limsup_{r \to \infty} \frac{\log^{+}\log^{+} N(r, 1/f_{0})}{\log\log r} < 1.
\]
Thus, we can obtain a similar estimation for $N(r, 1/f_{0})$ as in Lemma \ref{le3.3} using the same analysis. Furthermore, since 
\[
(\eta_{q}^{\pm(n-2k)}f_{0})(x) = (\eta_{q^{\pm(n-2k)}}f_{0})(x)
\]
holds for all sufficiently large $|x|$, and 
$
(\eta_{q^{n-2k}}f_{0})(\eta_{q^{-n+2k}}f_{0})
$
is entire for $k = 0, \ldots, \lfloor n/2 \rfloor$, by applying the First Main Theorem to 
$
(\eta_{q^{n-2k}}f_{0})(\eta_{q^{-n+2k}}f_{0}),
$
we obtain
    \begin{eqnarray}\label{aeq5.3}
        I_{11}&=&\sum_{k=0}^{\lfloor n/2\rfloor}\left(N\left(r, \frac{1}{(\eta_{q^{n-2k}}f_{0})(\eta_{q^{-n+2k}}f_{0})}\right)-2N\left(r, \frac{1}{f_{0}}\right)+O(\log r)\right)\nonumber\\
        &=&\sum_{k=0}^{\lfloor n/2\rfloor}\left(N\left(r, \frac{1}{(\eta_{q}^{n-2k}f_{0})(\eta_{q}^{-n+2k}f_{0})}\right)-2N\left(r, \frac{1}{f_{0}}\right)+O(\log r)\right)\nonumber\\
        &=&O\left(\frac{N(r, 1/f_{0})}{\log\log r}\right)+O(\log r)= S_{\log}(r, f),
    \end{eqnarray}
    where $r$ tends to infinity outside of an exceptional set $E$ satisfying (\ref{eq3.3}). For $I_{12}$, since for any linear form $L$ we have
\begin{eqnarray}\label{aeq5.11}
    T(r, L(\mathbf{g})) \leq K \sum_{i=1}^{n} T(r, g_{i}) + O(1) = O(T_{f}(r)),
\end{eqnarray}
where the constant $K > 0$ is independent of $r$ and $\mathbf{g}$, it follows that, by applying Lemma \ref{le3.2}, we have
\begin{eqnarray}\label{aeq5.4}
    I_{12} = O\left(\sum_{\mu \in E} \frac{T(r, L_{\mu(0)}(\mathbf{g}))}{\log\log r}\right) + O(\log r) = S_{\log}(r, f),
\end{eqnarray}
where $r$ tends to infinity outside of an exceptional set $E$ satisfying (\ref{eq3.3}). It is important to note that 
\[
\log \max\limits_{1 \leq j \leq n} \{a_{j}\} \leq \sum_{j=1}^{n} \log a_{j}
\]
for positive numbers $a_{j}$ may not hold, but 
\[
\log \max\limits_{1 \leq j \leq n} \{a_{j}\} \leq \sum_{j=1}^{n} \log^{+} a_{j}
\]
is valid.

For $I_{2}$, we define 
\[
h_{\mu_{i}} = \frac{L_{\mu(i)}(\mathbf{f})}{L_{\mu(0)}(\mathbf{f})} = \frac{L_{\mu(i)}(\mathbf{g})}{L_{\mu(0)}(\mathbf{g})}, \quad \text{for } i = 1, \ldots, n,
\]
where $\mathbf{g} := (1, f_{1}/f_{0}, \ldots, f_{n}/f_{0})$. Thus, from (\ref{aeq5.11}), we have 
\[
T(r, h_{\mu_{i}}) \leq T(r, L_{\mu(i)}(\mathbf{g})) + T(r, L_{\mu(0)}(\mathbf{g})) + O(1) = O(T_{f}(r)).
\]
It follows from (\ref{eq4.1.2}), Lemma \ref{le3.2} and Theorem \ref{the3.4} that
    \begin{eqnarray}\label{aeq5.5}
        I_{2}&\leq&\int_{0}^{2\pi}\sum_{\mu\in E}\log ^{+}\frac{\left|\mathcal{W}(\mathcal{D}_{q}h_{\mu_{1}}, \cdots, \mathcal{D}_{q}h_{\mu_{n}})\right|}{|h_{\mu_{1}}\cdots h_{\mu_{n}}|}(re^{i\theta})\frac{d\theta}{2\pi}\nonumber\\
        &\leq&\int_{0}^{2\pi}\sum_{\mu\in E}\sum_{i=1}^{n}\sum_{k=0}^{n-1}\log^{+}\left|\frac{\mathcal{A}_{q^{n-1-k}}\mathcal{D}_{q}^{k}h_{\mu_{i}}}{h_{\mu_{i}}}\right|(re^{i\theta})\frac{d\theta}{2\pi}+O(1)\nonumber\\
        &=&S_{\log}(r, f),
    \end{eqnarray}
     where $r$ tends infinity outside of an exceptional set $E$ satisfying (\ref{eq3.3}). Lastly, by applying the First Main Theorem to $\mathcal{W}(f_{0}, \cdots, f_{n})$, we have
    \begin{eqnarray}\label{aeq5.6}
        I_{3}=(n+1)T_{f}(r)-N_{\mathcal{W}}(r, 0)+O(1).
    \end{eqnarray}
    Then we complete the proof by combining (\ref{aeq5.1.1}), (\ref{aeq5.1.2}), (\ref{aeq5.3}), (\ref{aeq5.4}), (\ref{aeq5.5}) and (\ref{aeq5.6}).
    \end{proof}

Recall that the classical counting function truncated by $M\in\mathbb{N}$ for any meromorphic function $f$ is defined by
\begin{eqnarray}
    \overline{n}^{(M)}\left(r, \frac{1}{f-a}\right) &=& \sum_{x \in D(0, r)} \left(\nu_{f-a}^{0}(x) - \min\limits_{0\leq t\leq M}\left\{\nu_{(f-a)^{(t)}}^{0}(x)\right\}\right)\nonumber\\
    &=&\sum_{x \in D(0, r)}\min\left\{\nu_{f-a}^{0}(x), M\right\}\label{aeq5.6.1.1.1}
\end{eqnarray}
where $\nu_{f}^{0}(x) \geq 0$ denotes the order of zero of $f$ at $x$, and $a \in \mathbb{C}$.

Furthermore, the Askey-Wilson-type truncated counting function for any meromorphic function $f$ is defined by
\begin{eqnarray}
    \tilde{n}_{AW}\left(r, \frac{1}{f-a}\right) &=& \sum_{x \in D(0, r)} \left(\nu_{f-a}^{0}(x) - \min\left\{\nu_{f-a}^{0}(x), \nu_{\eta_{q}\mathcal{D}_{q}(f-a)}^{0}(x)\right\}\right)\nonumber\\
    &=& \sum_{x \in D(0, r)} \left(\nu_{f-a}^{0}(x) - \min\left\{\nu_{f-a}^{0}(x), \nu_{\eta_{q}^{2}(f-a)}^{0}(x)\right\}\right)+O(1)\label{aeq5.5.1}
\end{eqnarray}
which can be seen in \cite[Definitions (110) and (112)]{NADV}. Similarly, $\tilde{n}_{AW}(r, f)$ and the corresponding Askey-Wilson-type truncated integrated counting functions $\tilde{N}_{AW}(r, 1/(f-a))$ and $\tilde{N}_{AW}(r, f)$ can be defined \cite[Definitions (111) and (113)]{NADV}. Then Chiang and Feng formulated the truncated form of the Askey-Wilson-type Second Main Theorem \cite[Theorem 7.1]{NADV}.

In this paper, we redefine the \textit{Askey-Wilson-type truncated counting function} using a form-similar with (\ref{aeq5.6.1.1.1}) approach that is distinct from Chiang and Feng's definition, and generalize the Askey-Wilson-type Truncated Second Main Theorem in projective space. Specifically, let $M \in \mathbb{N}$. For any meromorphic function $f$ and $a \in \mathbb{C}$, we define the \textit{Askey-Wilson version of counting function truncated by $M$} as
\begin{eqnarray}\label{aeq5.5.2}
    \tilde{n}_{AW}^{[M]}\left(r, \frac{1}{f-a}\right) = \sum_{x \in D(0, r)} \left(\nu_{\eta_{q}^{\delta(M)}(f-a)}^{0}(x) - \min\limits_{0 \leq t \leq M} \left\{\nu_{\mathcal{A}_{q^{M-t}}\mathcal{D}_{q}^{t}(f-a)}^{0}(x)\right\}\right),
\end{eqnarray}
where
\begin{eqnarray}\label{aeq5.5.3}
    \delta(M) = \begin{cases} 
        -1, & M \text{ is odd}, \\
        0, & M \text{ is even}.
    \end{cases}
\end{eqnarray}
The \textit{integrated counting function truncated by $M$} is denoted by
\begin{eqnarray*}
    \tilde{N}_{AW}^{[M]}\left(r, \frac{1}{f-a}\right) &=& \int_{0}^{r} \frac{\tilde{n}_{AW}^{[M]}\left(t, 1/(f-a)\right) - \tilde{n}_{AW}^{[M]}\left(0, 1/(f-a)\right)}{t} dt \\
    && + \tilde{n}_{AW}^{[M]}\left(0, 1/(f-a)\right) \log r.
\end{eqnarray*}

Similarly, we can define 
\[
\tilde{n}_{AW}^{[M]}\left(r, f\right) = \tilde{n}_{AW}^{[M]}\left(r, 1/(1/f)\right) \quad \text{and} \quad 
\tilde{N}_{AW}^{[M]}\left(r, f\right) = \tilde{N}_{AW}^{[M]}\left(r, 1/(1/f)\right).
\]

\noindent\textbf{Remark}: By comparing (\ref{aeq5.6.1.1.1}) and (\ref{aeq5.5.2}), we can see that the term $\mathcal{A}_{q^{M-t}}\mathcal{D}_{q}^{t}f$ plays the role with the derivative $f^{(t)}$. In \cite{Rad}, Ishizaki, the second author, Li, and Tohge defined the difference analogue of the truncated counting function and the corresponding integrated counting function with this method. In their definition, $\overline{f}^{[t]}$ plays the role with $f^{(t)}$.

As for $\delta(M) \in \{M-2t\}_{t=0}^{M}$, from the following lemma, we will see that if there are infinitely many $a$-points of $f$, then $\delta(M)$ ensures $\tilde{n}_{AW}^{[M]}(r, 1/(f-a))$ increases as $r$ becomes sufficiently large. Before stating the lemma, we provide some explanations.

Let $f$ and $g$ be two complex functions that are analytic at $x_{0}$. Then, we can easily verify:
\begin{eqnarray}\label{aeq5.2.1}
    \nu_{f-g}^{0}(x_{0}) \geq \min\left\{\nu_{f}^{0}(x_{0}), \nu_{g}^{0}(x_{0})\right\},
\end{eqnarray}
and
\begin{eqnarray}\label{aeq5.2.2}
    \min\left\{\nu_{f}^{0}(x_{0}), \nu_{f-g}^{0}(x_{0})\right\} = \min\left\{\nu_{f}^{0}(x_{0}), \nu_{g}^{0}(x_{0})\right\}.
\end{eqnarray}
We say there is no cancellation for $f-g$ at $x_{0}$ when equality holds in (\ref{aeq5.2.1}), and there is cancellation for $f-g$ at $x_{0}$ when equality in (\ref{aeq5.2.1}) does not hold.

\smallskip

\begin{lemma}\label{le5.3}
    Let $M \in \mathbb{N}$ and $f$ be a meromorphic function. Then, we have
    \begin{eqnarray}\label{eq5.3}
        \min\limits_{0 \leq t \leq M}\left\{\nu_{\mathcal{A}_{q^{M-t}}\mathcal{D}_{q}^{t}f}^{0}(x)\right\} = \min\limits_{0 \leq t \leq M}\left\{\nu_{\eta_{q}^{M-2t}f}^{0}(x)\right\}
    \end{eqnarray}
    for all $x \in \mathbb{C}$ satisfying $|x| > \mathcal{R}(M, q)$.
\end{lemma}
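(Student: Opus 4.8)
The plan is to reduce (\ref{eq5.3}) to the invertibility, at the point in question, of the $f$-independent linear change of ``coordinates'' that carries the tuple $(\eta_{q}^{M-2t}f)_{t=0}^{M}$ to the tuple $(\mathcal{A}_{q^{M-t}}\mathcal{D}_{q}^{t}f)_{t=0}^{M}$, and then to read off that invertibility from the second equality (\ref{eq4.1.3}) of Proposition \ref{pro4.1}. Fix $x_{0}\in\mathbb{C}$ with $|x_{0}|>\mathcal{R}(M,q)$ and set $g_{k}:=\eta_{q}^{M-2k}f$ and $h_{t}:=\mathcal{A}_{q^{M-t}}\mathcal{D}_{q}^{t}f$ for $k,t=0,\dots,M$. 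First I would unwind the definitions: for $|x|>\mathcal{R}(M,q)$ the composition rule $\eta_{q}^{m_{1}}\eta_{q}^{m_{2}}=\eta_{q}^{m_{1}+m_{2}}$ is valid for $|m_{1}|,|m_{2}|\le M$, so expanding $\mathcal{D}_{q}^{t}$ and then applying $\mathcal{A}_{q^{M-t}}$ exactly as in the proof of Proposition \ref{pro4.1} shows that each $h_{t}$ is a finite linear combination $h_{t}=\sum_{k=0}^{M}a_{t,k}(x)\,g_{k}$ whose coefficients $a_{t,k}(x)$ depend only on $x$, $q$, $M$ (not on $f$) and are rational expressions in $x$ whose denominators are products of factors $\eta_{q}^{a}x-\eta_{q}^{b}x$ with $a\neq b$, $|a|,|b|\le M$. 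As noted in the proof of Proposition \ref{pro4.1}, all such factors are nonzero for $|x|>\mathcal{R}(M,q)$, so each $a_{t,k}$ is analytic at $x_{0}$. Writing $v(f):=(h_{0},\dots,h_{M})^{\mathrm{T}}$, $w(f):=(g_{0},\dots,g_{M})^{\mathrm{T}}$, and $A(x):=(a_{t,k}(x))_{t,k=0}^{M}$, this is the identity $v(f)=A(x)\,w(f)$, valid for every meromorphic $f$ and every $x$ with $|x|>\mathcal{R}(M,q)$.

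Next I would identify $\det A$. Applying the identity $v(f)=A(x)w(f)$ with $f$ replaced by each of $M+1$ meromorphic functions $f_{0},\dots,f_{M}$ and forming the matrices with the indicated columns gives
\begin{equation*}
\det\bigl[\,v(f_{0})\mid\cdots\mid v(f_{M})\,\bigr]=\det A(x)\cdot\det\bigl[\,w(f_{0})\mid\cdots\mid w(f_{M})\,\bigr].
\end{equation*}
By the definition (\ref{eq4.1.2}) the left-hand determinant is $\mathcal{W}(f_{0},\dots,f_{M})(x)$, while the right-hand determinant is precisely the one occurring in (\ref{eq4.1.3}); comparing with (\ref{eq4.1.3}) (legitimate since $|x_{0}|>\mathcal{R}(M,q)$) and choosing $f_{0},\dots,f_{M}$ so that the latter determinant is not identically zero — for instance the monomials $f_{j}(\cdot)=(\cdot)^{j}$ in the argument, for which it becomes the nonzero Vandermonde determinant $\prod_{0\le k<k'\le M}(\eta_{q}^{M-2k}x-\eta_{q}^{M-2k'}x)$ — yields
\begin{equation*}
\det A(x)=\prod_{i=1}^{M}\prod_{j=1}^{i}\frac{-1}{\eta_{q}^{i-2j+2}x-\eta_{q}^{i-2j}x}
\end{equation*}
for all $|x|>\mathcal{R}(M,q)$, which is finite and nonzero at $x_{0}$. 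Since the entries of $A$ are analytic at $x_{0}$ and $\det A(x_{0})\neq 0$, Cramer's rule shows that $A(x)^{-1}$ has entries analytic at $x_{0}$; hence $w(f)=A(x)^{-1}v(f)$ near $x_{0}$, i.e. each $g_{k}=\sum_{t=0}^{M}b_{k,t}(x)\,h_{t}$ with every $b_{k,t}$ analytic at $x_{0}$.

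Finally I would conclude from these two-sided relations. Put $d:=\min_{0\le k\le M}\nu_{g_{k}}^{0}(x_{0})$ and $e:=\min_{0\le t\le M}\nu_{h_{t}}^{0}(x_{0})$, so that (\ref{eq5.3}) is the statement $e=d$. If $f$ has no pole at any of the (pairwise distinct) points $\eta_{q}^{M-2k}x_{0}$, then every $g_{k}$ is analytic at $x_{0}$, hence so is every $h_{t}$, and $h_{t}=\sum_{k}a_{t,k}g_{k}$ with $a_{t,k}$ analytic at $x_{0}$ gives $\nu_{h_{t}}^{0}(x_{0})\ge d$ for each $t$, hence $e\ge d$; symmetrically $g_{k}=\sum_{t}b_{k,t}h_{t}$ gives $\nu_{g_{k}}^{0}(x_{0})\ge e$ for each $k$, hence $d\ge e$, so $e=d$. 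If instead some $g_{k_{0}}$ has a pole at $x_{0}$, then $d=0$ and it suffices to see $e=0$: were every $h_{t}$ analytic and vanishing at $x_{0}$, the relation $g_{k_{0}}=\sum_{t}b_{k_{0},t}h_{t}$ with $b_{k_{0},t}$ analytic at $x_{0}$ would force $g_{k_{0}}$ to be analytic at $x_{0}$, a contradiction; hence $e=0=d$.

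The step I expect to be the main obstacle is the invertibility of $A(x_{0})$, i.e. the determinant computation of the second paragraph; but this is exactly what (\ref{eq4.1.3}) of Proposition \ref{pro4.1} supplies, once one observes that the change of variables there is the same $f$-independent matrix $A(x)$ acting columnwise. The remaining points — that the coefficients $a_{t,k}$ are genuinely independent of $f$ and analytic at $x_{0}$ for $|x|>\mathcal{R}(M,q)$, and the short separate treatment of points lying over a pole of $f$ — are routine bookkeeping.
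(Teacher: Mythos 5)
Your proof is correct, but it follows a genuinely different route from the paper. The paper's own argument is elementary and purely valuation-theoretic: it first establishes, by a case analysis on cancellations (at most one of a sum and a difference can cancel), the identity
\begin{equation*}
\min\Bigl\{\nu^{0}_{\eta_{q}^{M}f+\eta_{q}^{-M}f}(x),\ \min_{0\le t\le M-1}\nu^{0}_{\eta_{q}^{M-2t}f-\eta_{q}^{M-2t-2}f}(x)\Bigr\}=\min_{0\le t\le M}\nu^{0}_{\eta_{q}^{M-2t}f}(x),
\end{equation*}
and then proves (\ref{eq5.3}) by induction on $M$, applying the induction hypothesis to $\mathcal{D}_{q}f$ and unwinding $\mathcal{A}_{q^{M-t}}\mathcal{D}_{q}^{t}f=\mathcal{A}_{q^{M-1-(t-1)}}\mathcal{D}_{q}^{t-1}(\mathcal{D}_{q}f)$. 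You instead package the passage from $(\eta_{q}^{M-2k}f)_{k}$ to $(\mathcal{A}_{q^{M-t}}\mathcal{D}_{q}^{t}f)_{t}$ as an $f$-independent matrix $A(x)$ with entries analytic for $|x|>\mathcal{R}(M,q)$, identify $\det A$ with the product factor in (\ref{eq4.1.3}) by feeding the columnwise relation into the definition (\ref{eq4.1.2}) and Proposition \ref{pro4.1}, and conclude by inverting $A$ via Cramer's rule; the two-sided analytic-coefficient relations then give equality of the minima, with a clean separate treatment of the pole case. This buys a conceptually transparent argument (equality of minima is just invertibility of an analytic change of frame, and it would work verbatim for any such $f$-independent transformation), and it even yields the explicit value of $\det A$; the cost is that it leans on Proposition \ref{pro4.1} and on knowing that the points $\eta_{q}^{M-2k}x$ are pairwise distinct (equivalently that the Vandermonde is not identically zero) for $|x|>\mathcal{R}(M,q)$ — which does hold, since $\eta_{q}^{a}x=\eta_{q}^{b}x$ with $a\ne b$ forces $|z|\le|q|^{-(M-1)/2}$, or can be sidestepped by the identity theorem as you implicitly do — whereas the paper's induction is self-contained, needs no determinant identity, and produces the intermediate telescoping statement as a by-product. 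Your handling of the pole case (using the convention $\nu^{0}=0$ at poles) is consistent with the paper's usage and is, if anything, spelled out more carefully than in the paper's own proof.
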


\smallskip

\begin{proof}
    We first prove that
    \begin{eqnarray}\label{aeq5.3.1}
        &&\min\left\{\nu_{\eta_{q}^{M}f + \eta_{q}^{-M}f}^{0}(x), \min\limits_{0 \leq t \leq M-1}\left\{\nu_{\eta_{q}^{M-2t}f - \eta_{q}^{M-2t-2}f}^{0}(x)\right\}\right\} \nonumber \\
        &=& \min\limits_{0 \leq t \leq M}\left\{\nu_{\eta_{q}^{M-2t}f}^{0}(x)\right\},
    \end{eqnarray}
    for all $x \in \mathbb{C}$ satisfying $|x| > \mathcal{R}(M, q)$. Indeed, if there is cancellation for all $\eta_{q}^{M-2t}f(x) - \eta_{q}^{M-2t-2}f(x)$, $t = 0, \ldots, M-1$, then there is no cancellation for $\eta_{q}^{M}f(x) + \eta_{q}^{-M}f(x)$. Thus, (\ref{aeq5.3.1}) holds and equals $\nu_{\eta_{q}^{M-2t}f}^{0}(x)$ for $t = 0, \ldots, M$. 

    If there exist some $t \in \{0, 1, \ldots, M-1\}$ such that there is no cancellation for $\eta_{q}^{M-2t}f(x) - \eta_{q}^{M-2t-2}f(x)$, we claim that
    \begin{eqnarray}\label{aeq5.3.1.1}
        \min\limits_{0 \leq t \leq M-1}\left\{\nu_{\eta_{q}^{M-2t}f - \eta_{q}^{M-2t-2}f}^{0}(x)\right\} = \min\limits_{0 \leq t \leq M}\left\{\nu_{\eta_{q}^{M-2t}f}^{0}(x)\right\}.
    \end{eqnarray}
    Then $\nu_{\eta_{q}^{M}f + \eta_{q}^{-M}f}^{0}(x) \geq \min\left\{\nu_{\eta_{q}^{M}f}^{0}(x), \nu_{\eta_{q}^{-M}f}^{0}(x)\right\} \geq \min\limits_{0 \leq t \leq M}\left\{\nu_{\eta_{q}^{M-2t}f}^{0}(x)\right\}$, which proves (\ref{aeq5.3.1}). Without loss of generality, assume $$\nu_{\eta_{q}^{M}f - \eta_{q}^{M-2}f}^{0}(x) = \min\left\{\nu_{\eta_{q}^{M}f}^{0}(x), \nu_{\eta_{q}^{M-2}f}^{0}(x)\right\}.$$ It follows from (\ref{aeq5.2.2}) that
    \begin{eqnarray*}
        &&\min\limits_{0 \leq t \leq M-1}\left\{\nu_{\eta_{q}^{M-2t}f - \eta_{q}^{M-2t-2}f}^{0}(x)\right\} \\
        &=& \min\left\{\nu_{\eta_{q}^{M}f}^{0}(x), \min\left\{\nu_{\eta_{q}^{M-2}f}^{0}(x), \nu_{\eta_{q}^{M-2}f - \eta_{q}^{M-4}f}^{0}(x)\right\}, \ldots\right\} \\
        &=& \min\left\{\nu_{\eta_{q}^{M}f}^{0}(x), \nu_{\eta_{q}^{M-2}f}^{0}(x), \nu_{\eta_{q}^{M-4}f}^{0}(x), \ldots\right\} \\
        &=& \cdots \,\,=\min\limits_{0 \leq t \leq M}\left\{\nu_{\eta_{q}^{M-2t}f}^{0}(x)\right\},
    \end{eqnarray*}
    which proves the claim (\ref{aeq5.3.1.1}).

    Now, we prove (\ref{eq5.3}) by induction. When $M = 1$, since at most one of $\eta_{q}f + \eta_{q}^{-1}f$ and $\eta_{q}f - \eta_{q}^{-1}f$ have cancellation, it follows from (\ref{aeq5.2.1}) that
    \begin{eqnarray*}
        \min\left\{\nu_{\mathcal{A}_{q}f}^{0}(x), \nu_{\mathcal{D}_{q}f}^{0}(x)\right\} &=& \min\left\{\nu_{\eta_{q}f + \eta_{q}^{-1}f}^{0}(x), \nu_{\eta_{q}f - \eta_{q}^{-1}f}^{0}(x)\right\} \\
        &=& \min\left\{\nu_{\eta_{q}f}^{0}(x), \nu_{\eta_{q}^{-1}f}^{0}(x)\right\}.
    \end{eqnarray*}
    Thus, the assertion is valid for $M = 1$. Assume (\ref{eq5.3}) holds for $M-1$. Then for $M$, we have
    \begin{eqnarray*}
            &&\min\limits_{0\leq t\leq M}\left\{\nu_{\mathcal{A}_{q^{M-t}}\mathcal{D}_{q}^{t}f}^{0}(x)\right\}\\
            &=&\min\left\{\nu_{\mathcal{A}_{q^{M}}f}^{0}(x), \min\limits_{1\leq t\leq M}\left\{\nu_{\mathcal{A}_{q^{M-t}}\mathcal{D}_{q}^{t}f}^{0}(x)\right\}\right\}\\
            &=&\min\left\{\nu_{\mathcal{A}_{q^{M}}f}^{0}(x), \min\limits_{0\leq t\leq M-1}\left\{\nu_{\mathcal{A}_{q^{M-1-t}}\mathcal{D}_{q}^{t}(\mathcal{D}_{q}f)}^{0}(x)\right\}\right\}\\
            &=&\min\left\{\nu_{\mathcal{A}_{q^{M}}f}^{0}(x), \min\limits_{0\leq t\leq M-1}\left\{\nu_{\eta_{q}^{M-1-2t}(\mathcal{D}_{q}f)}^{0}(x)\right\}\right\}\\
            &=&\min\left\{\nu_{\eta_{q}^{M}f+\eta_{q}^{-M}f}^{0}(x), \min\limits_{0\leq t\leq M-1}\left\{\nu_{\eta_{q}^{M-2t}f-\eta_{q}^{M-2t-2}f}^{0}(x)\right\}\right\}\\
            &=&\min\limits_{0\leq j\leq M}\left\{\nu_{\eta_{q}^{M-2j}f}^{0}(x)\right\},
        \end{eqnarray*}
    where the third equality follows by the induction hypothesis and the last equality is obtained from (\ref{aeq5.3.1}). Hence, the proof is complete.
\end{proof}

From this lemma, we also have
\begin{eqnarray*}
    \tilde{n}_{AW}^{[1]}\left(r, \frac{1}{f-a}\right) = \sum_{x \in D(0, r)} \left(\nu_{\eta_{q}^{-1}(f-a)}^{0}(x) - \min \left\{\nu_{\eta_{q}(f-a)}^{0}(x), \nu_{\eta_{q}^{-1}(f-a)}^{0}(x)\right\}\right)+O(1).
\end{eqnarray*}
Hence there is only one $\eta_{q}^{-1}$ difference between definitions (\ref{aeq5.5.1}) and (\ref{aeq5.5.2}), up to a constant, and the difference can be ignored to some degree from the following theorem. This theorem also implies that $\tilde{N}_{AW}(r, 1/(f-a))$ can be replaced by $\tilde{N}_{AW}^{[1]}(r, 1/(f-a))$ in \cite[Theorem 7.1]{NADV}.

\smallskip

\begin{theorem}\label{the5.2}
    Let $f$ be a non-constant meromorphic function with $\sigma_{2}^{\log}(f) < 1$. Then, for each $a \in \hat{\mathbb{C}}$, we have
    \begin{eqnarray}\label{eq5.2.00}
        \tilde{N}_{AW}^{[1]}\left(r, \frac{1}{f-a}\right) = \tilde{N}_{AW}\left(r, \frac{1}{f-a}\right) + S_{\log}(r, f),
    \end{eqnarray}
    where $r$ tends to infinity outside of a set $F$ satisfying (\ref{eq3.3}).
\end{theorem}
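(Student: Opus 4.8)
The plan is to compare the two truncated counting functions pointwise, for all sufficiently large $|x|$, and then control the contribution of the bounded region $|x| \leq \mathcal{R}(1, q)$ via an $O(\log r)$ term, exactly as in the passage from Lemma~\ref{le3.3}. Recall from the remark after Lemma~\ref{le5.3} that, up to a constant,
\[
\tilde{n}_{AW}^{[1]}\left(r, \tfrac{1}{f-a}\right) = \sum_{x \in D(0,r)} \left(\nu_{\eta_{q}^{-1}(f-a)}^{0}(x) - \min\left\{\nu_{\eta_{q}(f-a)}^{0}(x), \nu_{\eta_{q}^{-1}(f-a)}^{0}(x)\right\}\right),
\]
while by (\ref{aeq5.5.1}), again up to a constant,
\[
\tilde{n}_{AW}\left(r, \tfrac{1}{f-a}\right) = \sum_{x \in D(0,r)} \left(\nu_{f-a}^{0}(x) - \min\left\{\nu_{f-a}^{0}(x), \nu_{\eta_{q}^{2}(f-a)}^{0}(x)\right\}\right).
\]
So the difference between the two summands at a point $x$ with $|x| > \mathcal{R}(1,q)$ is governed by replacing the reference zero-order $\nu_{f-a}^{0}(x)$ by $\nu_{\eta_{q}^{-1}(f-a)}^{0}(x)$ and the ``shifted'' order $\nu_{\eta_{q}^{2}(f-a)}^{0}(x)$ by $\nu_{\eta_{q}(f-a)}^{0}(x)$; but applying the analytic, invertible shift $\eta_{q}^{-1}$ to all three of $f-a$, $\eta_{q}(f-a)$, $\eta_{q}^{2}(f-a)$ converts one expression into the other, shifting the location $x$ to $\eta_{q}^{-1}x$. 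Hence the two counting measures agree as weighted point-count measures after the coordinate change $x \mapsto \eta_{q}^{-1}x$, valid for $|x|$ large.

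The key steps, in order. First, fix $g = f - a$ (or $g = 1/f$ when $a = \infty$) and write both truncated counting functions explicitly via the identities above, noting that the $O(1)$ errors in (\ref{aeq5.5.1}) and the remark contribute only $O(1)$ to the integrated functions. Second, observe that for $|x| > \mathcal{R}(1,q)$ the operators $\eta_{q}$ and $\eta_{q}^{-1}$ are analytic and mutually inverse, so $\nu_{\eta_{q}^{-1}h}^{0}(x) = \nu_{h}^{0}(\eta_{q}^{-1}x)$ and $\nu_{\eta_{q} h}^{0}(x) = \nu_{h}^{0}(\eta_{q}x)$; therefore the summand of $\tilde{n}_{AW}^{[1]}$ at $x$ equals the summand of $\tilde{n}_{AW}$ at the point $\eta_{q}^{-1}x$. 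Third, translate this into a relation between the \emph{integrated} counting functions: since $|\eta_{q}^{\pm 1}x| \leq 2|q^{\pm 1/2}|\,r$ when $|x| = r$ is large (the same estimate used in Lemma~\ref{le3.3}), the point-set $\{\eta_{q}^{-1}x : |x| < r\}$ is squeezed between discs of radii comparable to $r$, so
\[
\tilde{N}_{AW}^{[1]}\left(r, \tfrac{1}{f-a}\right) \leq \tilde{N}_{AW}\left(2|q^{-1/2}|\,r, \tfrac{1}{f-a}\right) + O(\log r)
\]
and symmetrically with the roles reversed. Fourth, apply Lemma~\ref{le3.1} to the non-decreasing function $\tilde{N}_{AW}(r, 1/(f-a))$ — whose logarithmic hyper-order is at most that of $T(r, f) < 1$ by the First Main Theorem and the bound $\tilde{N}_{AW} \leq N$ — to absorb the dilation $r \mapsto 2|q^{\mp 1/2}| r \leq r \log r$ into an $S_{\log}(r, f)$ error outside an exceptional set $F$ satisfying (\ref{eq3.3}); combining the two one-sided inequalities gives (\ref{eq5.2.00}).

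The main obstacle is the bookkeeping at the boundary between the ``large $|x|$'' region, where $\eta_{q}^{\pm 1}$ behave like honest bijective shifts and the pointwise identity holds cleanly, and the compact region $|x| \leq \mathcal{R}(1,q)$, where the shift operators are neither invertible nor even single-valued in the relevant sense; one must argue, as in the comment following Lemma~\ref{le3.3}, that the zeros of the finitely many relevant functions inside this fixed disc contribute at most $O(1)$ to the counting functions and at most $O(\log r)$ to the integrated counting functions, and that no spurious cancellation phenomenon across the boundary can occur. A secondary technical point is to confirm that the $\min$ over truncation levels is genuinely invariant under the shift, which is immediate from the pointwise zero-order identities above but should be stated carefully so that the truncation is matched level-for-level. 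Once these are handled the rest is the routine Lemma~\ref{le3.1} dilation argument already used repeatedly in Section~\ref{sec3}.
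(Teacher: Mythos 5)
Your proposal is correct and follows essentially the same route as the paper: a pointwise identification of the two truncated counting measures via Lemma~\ref{le5.3} (the paper phrases this as $\tilde{N}_{AW}^{[1]}(r,1/\eta_{q}(f-a))=\tilde{N}_{AW}(r,1/(f-a))+O(\log r)$, which is your change of variable $x\mapsto\eta_{q}^{-1}x$ in the summation), followed by the Lemma~\ref{le3.3}/Lemma~\ref{le3.1} dilation argument, justified by the bound $\tilde{N}_{AW}^{[1]}\leq N(r,1/\eta_{q}^{-1}(f-a))\leq T(r,f)+S_{\log}(r,f)$ ensuring logarithmic hyper-order less than one, with the bounded region $|x|\leq\mathcal{R}(1,q)$ absorbed into $O(\log r)$. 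The only cosmetic slip is calling the contribution of the $O(1)$ counting-function errors ``$O(1)$'' at the integrated level, where it is $O(\log r)$, but since you carry $O(\log r)\subset S_{\log}(r,f)$ elsewhere this does not affect the argument.
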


\smallskip

\begin{proof}
There is nothing to prove when there are only finitely many $a$-points for $f$, because $\tilde{N}_{AW}^{[1]}(r, 1/(f-a)) = O(\log r)$ and $\tilde{N}_{AW}(r, 1/(f-a)) = O(\log r)$, both of which are included in the error term $S_{\log}(r, f)$. Hence, we assume that there are infinitely many $a$-points for $f$. By following Lemma \ref{le5.3}, when $r$ is large enough, we have
\begin{eqnarray*}
    &&\tilde{n}_{AW}^{[1]}\left(r, \frac{1}{\eta_{q}(f-a)}\right) \\
    &=& \sum_{x \in D(0, r)} \left(\nu_{f-a}^{0}(x) - \min\left\{\nu_{f-a}^{0}(x), \nu_{\eta_{q}^{2}(f-a)}^{0}(x)\right\}\right) + O(1) \\
    &=& \tilde{n}_{AW}\left(r, \frac{1}{f-a}\right) + O(1),
\end{eqnarray*}
which implies
\begin{eqnarray}\label{aeq5.2.1.1}
    \tilde{N}_{AW}^{[1]}\left(r, \frac{1}{\eta_{q}(f-a)}\right) = \tilde{N}_{AW}\left(r, \frac{1}{f-a}\right) + O(\log r).
\end{eqnarray}

Since $\tilde{n}_{AW}^{[1]}(r, 1/(f-a))$ and, consequently, $\tilde{N}_{AW}^{[1]}(r, 1/(f-a))$ are non-decreasing when $r$ is sufficiently large (by definition), and from Lemma \ref{le3.3}, we have
\begin{eqnarray*}
    \tilde{N}_{AW}^{[1]}(r, 1/(f-a)) &\leq& N(r, 1/\eta_{q}^{-1}(f-a)) \\
&=& N(r, 1/(f-a)) + S_{\log}(r, f) \\
&\leq& T(r, f) + S_{\log}(r, f),
\end{eqnarray*}
it follows that
\[
\limsup_{r \to \infty} \frac{\log\log \tilde{N}_{AW}^{[1]}(r, 1/(f-a))}{\log\log r} < 1.
\]
Thus, we can apply the proof of Lemma \ref{le3.3} to $\tilde{N}_{AW}(r, 1/(f-a))$ and obtain a similar result:
\begin{eqnarray}\label{aeq5.2.1.2}
    \tilde{N}_{AW}^{[1]}\left(r, \frac{1}{\eta_{q}(f-a)}\right) = \tilde{N}_{AW}^{[1]}\left(r, \frac{1}{f-a}\right) + S_{\log}(r, f).
\end{eqnarray}

The assertion then follows by combining (\ref{aeq5.2.1.1}) and (\ref{aeq5.2.1.2}).
\end{proof}

Next, we will extend the Truncated Second Main Theorem for the Askey-Wilson operator in projective space.

\smallskip

\begin{theorem}[Askey-Wilson version of the truncated SMT]\label{the5.4}
    Let $H_{1}, \ldots, H_{p}$ be arbitrary hyperplanes in $\mathbb{P}^{n}(\mathbb{C})$ in general position with defining linear forms $L_{1}, \ldots, L_{p}$. Let $f: \mathbb{C} \to \mathbb{P}^{n}(\mathbb{C})$ be a holomorphic map with a reduced representation $\mathbf{f} = (f_{0}, \ldots, f_{n})$ such that $f_{0}, \ldots, f_{n}$ are linearly independent, and let
    \begin{eqnarray}\label{eq5.4.1}
        \limsup_{r \to \infty}\frac{\log^{+}\log^{+} T_{f}(r)}{\log\log r} < 1.
    \end{eqnarray}
    Then,
    \begin{eqnarray}
        (p - (n+1))T_{f}(r) &\leq& \sum_{j=1}^{p} N\left(r, \frac{1}{L_{j}(\mathbf{f})}\right) - N_{\mathcal{W}}\left(r, 0\right) + S_{\log}(r, f), \label{eq5.4.2} \\
        &\leq& \sum_{j=1}^{p} \tilde{N}_{AW}^{[n]}\left(r, \frac{1}{L_{j}(\mathbf{f})}\right) + S_{\log}(r, f), \label{eq5.4.3}
    \end{eqnarray}
    where $r$ tends to infinity outside of an exceptional set $E$ satisfying (\ref{eq3.3}), $\mathcal{W} = \mathcal{W}(f_{0}, \ldots, f_{n})$ and $N_{\mathcal{W}}(r, 0)=N(r, 1/\mathcal{W})$.
\end{theorem}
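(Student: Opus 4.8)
The plan is to obtain (\ref{eq5.4.2}) from the general form of the Second Main Theorem (Theorem~\ref{the5.1}) together with the First Main Theorem, and then to sharpen it to (\ref{eq5.4.3}) through a pointwise estimate relating the zero divisor of the Askey--Wilson Wronskian--Casorati determinant to the truncated counting functions $\tilde N_{AW}^{[n]}$. Write $g_j=L_j(\mathbf f)$; one may assume $p\geq n+1$, since for $p\leq n$ both inequalities hold for all large $r$ because $T_f(r)/\log r\to\infty$ for a non-constant curve. For (\ref{eq5.4.2}) I would first record the usual general-position estimate: as $\bigcap_{j\in E}H_j=\varnothing$ whenever $\#E=n+1$, compactness yields $c>0$ with $\max_{j\in E}|L_j(\mathbf x)|\geq c\|\mathbf x\|$ for all $\mathbf x\in\mathbb C^{n+1}$ and all such $E$; ordering $|L_1(\mathbf f(z))|\leq\cdots\leq|L_p(\mathbf f(z))|$ and completing $\{L_1,\dots,L_n\}$ to an admissible set $\mathcal K$ (general position again), one gets $\sum_{j=1}^p\log\frac{\|\mathbf f(z)\|}{|L_j(\mathbf f)(z)|}\leq\max_{\mathcal K}\sum_{j\in\mathcal K}\log\frac{\|\mathbf f(z)\|}{|L_j(\mathbf f)(z)|}+O(1)$. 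Integrating in $\theta$, using $\deg H_j=1$, and feeding this into (\ref{eq5.2}) gives $\sum_{j=1}^p m_f(r,H_j)+N_{\mathcal W}(r,0)\leq(n+1)T_f(r)+S_{\log}(r,f)$; substituting $m_f(r,H_j)=T_f(r)-N(r,1/g_j)+O(1)$ from the First Main Theorem and rearranging yields (\ref{eq5.4.2}).

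For (\ref{eq5.4.3}), in view of (\ref{eq5.4.2}) it suffices to prove $\sum_{j=1}^p N(r,1/g_j)-N_{\mathcal W}(r,0)\leq\sum_{j=1}^p\tilde N_{AW}^{[n]}(r,1/g_j)+S_{\log}(r,f)$. Put $R_t=\mathcal A_{q^{n-t}}\mathcal D_q^{t}$, so that by (\ref{eq4.1.2}) the rows of $\mathcal W$ are $R_0,\dots,R_n$ applied to the columns. The key point is the pointwise inequality
\[
\nu^0_{\mathcal W}(x_0)\ \geq\ \sum_{j=1}^p\min_{0\leq t\leq n}\nu^0_{R_t g_j}(x_0),\qquad |x_0|>\mathcal R(n,q).
\]
To see this, fix such an $x_0$. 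By Lemma~\ref{le5.3}, $\min_{0\leq t\leq n}\nu^0_{R_t g_j}(x_0)=\min_{0\leq t\leq n}\nu^0_{\eta_q^{n-2t}g_j}(x_0)$, which is positive only if $g_j$ vanishes at every $\eta_q^{n-2t}x_0$, in particular at $\eta_q^{n}x_0$; since $\mathbf f$ is a reduced representation and the $H_j$ are in general position, at most $n$ of the $g_j$ vanish at any single point, so the set $J$ of those $j$ has $\#J\leq n$. As $p\geq n+1$, extend $J$ to an injective $\mu:\{0,\dots,n\}\to\{1,\dots,p\}$; by general position $L_{\mu(0)},\dots,L_{\mu(n)}$ are linearly independent, so (as in the proof of Theorem~\ref{the5.1}) $\mathcal W(g_{\mu(0)},\dots,g_{\mu(n)})=A_\mu\,\mathcal W(f_0,\dots,f_n)$ with $A_\mu\in\mathbb C\setminus\{0\}$, and $\mathcal W(f_0,\dots,f_n)\not\equiv0$ by Lemma~\ref{le4.4.1}. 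Expanding the determinant $\mathcal W(g_{\mu(0)},\dots,g_{\mu(n)})=\det(R_t g_{\mu(i)})_{0\leq t,i\leq n}$ by the Leibniz rule gives $\nu^0_{\mathcal W}(x_0)\geq\min_{\sigma\in S_{n+1}}\sum_{t=0}^n\nu^0_{R_t g_{\mu(\sigma(t))}}(x_0)\geq\sum_{i=0}^n\min_{0\leq t\leq n}\nu^0_{R_t g_{\mu(i)}}(x_0)\geq\sum_{j\in J}\min_{0\leq t\leq n}\nu^0_{R_t g_j}(x_0)$, and all remaining terms of the full sum over $j$ vanish, proving the claim.

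Finally I would integrate the pointwise inequality; the finitely many $x$ with $|x|\leq\mathcal R(n,q)$ contribute only $O(\log r)$. Using that, by the definition (\ref{aeq5.5.2}), $\sum_{|x|<r}\min_{0\leq t\leq n}\nu^0_{R_t g_j}(x)=n(r,1/\eta_q^{\delta(n)}g_j)-\tilde n_{AW}^{[n]}(r,1/g_j)$, this yields $N_{\mathcal W}(r,0)\geq\sum_{j=1}^p N(r,1/\eta_q^{\delta(n)}g_j)-\sum_{j=1}^p\tilde N_{AW}^{[n]}(r,1/g_j)+S_{\log}(r,f)$; then Lemma~\ref{le3.3} (together with $T(r,g_j)=O(T_f(r))$) lets me replace $N(r,1/\eta_q^{\delta(n)}g_j)$ by $N(r,1/g_j)$ up to $S_{\log}(r,f)$, which gives the required comparison and, combined with (\ref{eq5.4.2}), proves (\ref{eq5.4.3}).

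I expect the pointwise Wronskian estimate to be the main obstacle. Unlike ordinary derivatives, the operators $R_t$ do not lower the order of a zero at $x_0$ itself but act at the shifted points $\eta_q^{n-2t}x_0$, so it is precisely Lemma~\ref{le5.3} — which identifies $\min_t\nu^0_{\mathcal A_{q^{n-t}}\mathcal D_q^t g_j}$ with $\min_t\nu^0_{\eta_q^{n-2t}g_j}$ — that makes the per-point choice of $\mu$ interact correctly with the truncation structure, while the shift $\delta(n)$ in the leading term of $\tilde n_{AW}^{[n]}$ is the remaining bookkeeping that has to be handled through Lemma~\ref{le3.3}.
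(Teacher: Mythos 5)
Your argument is correct and follows essentially the same route as the paper: (\ref{eq5.4.2}) is obtained from Theorem \ref{the5.1} together with the general-position bound and the First Main Theorem, and (\ref{eq5.4.3}) from the pointwise estimate $\nu^{0}_{\mathcal{W}}(x)\geq\sum_{j}\min_{0\leq t\leq n}\nu^{0}_{\mathcal{A}_{q^{n-t}}\mathcal{D}_{q}^{t}L_{j}(\mathbf{f})}(x)$ for $|x|>\mathcal{R}(n,q)$ (via Lemma \ref{le5.3}, the fact that at most $n$ of the forms vanish at the relevant shifted point, and the constant-multiple Wronskian identity), followed by the shift comparison of the counting functions, exactly as in the paper's proof. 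The only point to adjust is your parenthetical appeal to $T(r,L_{j}(\mathbf{f}))=O(T_{f}(r))$, which can fail for an arbitrary reduced representation (the components may share a fast-growing zero-free factor, so the hypothesis on $T_{f}$ does not control $T(r,f_{i})$); as in the paper, one should instead bound the counting function, $N(r,1/L_{j}(\mathbf{f}))\leq N(r,1/f_{0})+N(r,1/L_{j}(\mathbf{g}))=O(T_{f}(r))$ with $\mathbf{g}=(1,f_{1}/f_{0},\ldots,f_{n}/f_{0})$, and run the argument of Lemma \ref{le3.3} on this counting function, which is all that the replacement $N(r,1/\eta_{q}^{\delta(n)}L_{j}(\mathbf{f}))=N(r,1/L_{j}(\mathbf{f}))+S_{\log}(r,f)$ requires.
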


\smallskip

\begin{proof}
For any $x\in\mathbb{C}$, we may assume
\begin{eqnarray*}
    |L_{1(x)}(\textbf{f})(x)|\leq|L_{2(x)}(\textbf{f})(x)|\leq\cdots \leq |L_{p(x)}(\textbf{f})(x)|.
\end{eqnarray*}
Since $H_{1}, \ldots, H_{p}$ are in general position, then there exists a constant $A>0$ only depending on the coefficients of $L_{j}$, $j=1, \cdots, p$ such that $\lVert \textbf{f}(x) \rVert\leq A\cdot |L_{(n+1)(x)}(\textbf{f})(x)|$. Then by applying Theorem \ref{the5.1}, we have
\begin{eqnarray*}
    \sum_{j=1}^{p}\int_{0}^{2\pi}\log\frac{\|\mathbf{f}(re^{i\theta})\|}{|L_{j}(\mathbf{f})(re^{i\theta})|}\frac{d\theta}{2\pi} + N_{\mathcal{W}}(r, 0) \leq (n+1)T_{f}(r) + S_{\log}(r, f),
\end{eqnarray*}
where $r$ tends to infinity outside of an exceptional set $F$ satisfying (\ref{eq3.3}). Since $L_{j}(\mathbf{f})$ is entire for $j = 1, \ldots, p$, by the First Main Theorem, we obtain
\begin{eqnarray*}
    \int_{0}^{2\pi}\log|L_{j}(\mathbf{f})(re^{i\theta})|\frac{d\theta}{2\pi} = N\left(r, \frac{1}{L_{j}(\mathbf{f})}\right) + O(1).
\end{eqnarray*}
Hence, the first inequality (\ref{eq5.4.2}) is valid.

For the second inequality (\ref{eq5.4.3}), we first claim
\begin{eqnarray}\label{aeq5.4.1}
    \sum_{j=1}^{p}N\left(r, \frac{1}{\eta_{q}^{\delta(n)}L_{j}(\mathbf{f})}\right) - N_{\mathcal{W}}(r, 0) \leq \sum_{j=1}^{p}\tilde{N}_{AW}^{[n]}\left(r, \frac{1}{L_{j}(\mathbf{f})}\right) + S_{\log}(r, f),
\end{eqnarray}
where $\delta(n)$ is defined as in (\ref{aeq5.5.3}). Since $H_{1}, \ldots, H_{p}$ are in general position, at most $n$ hyperplanes pass through $f$ for any fixed $x \in \mathbb{C}$. Without loss of generality, we may assume
\begin{eqnarray*}
    \nu_{\eta_{q}^{\delta(n)}L_{1}(\mathbf{f})}^{0}(x) \geq \cdots \geq \nu_{\eta_{q}^{\delta(n)}L_{k}(\mathbf{f})}^{0}(x) > 0 = \nu_{\eta_{q}^{\delta(n)}L_{k+1}(\mathbf{f})}^{0}(x) = \cdots = \nu_{\eta_{q}^{\delta(n)}L_{p}(\mathbf{f})}^{0}(x),
\end{eqnarray*}
where $0 \leq k \leq n$. Furthermore, we have
\begin{eqnarray*}
    \mathcal{W}(f_{0}, \ldots, f_{n})(x) = B \cdot \mathcal{W}(L_{1}(\mathbf{f}), \ldots, L_{n+1}(\mathbf{f}))(x),
\end{eqnarray*}
where $B$ is a non-zero constant. Then,
\begin{eqnarray*}
    \nu_{\mathcal{W}}^{0}(x) = \nu_{\mathcal{W}(L_{1}(\mathbf{f}), \ldots, L_{n+1}(\mathbf{f}))}^{0}(x) \geq \sum_{j=1}^{p} \min\limits_{0 \leq t \leq n}\left\{\nu_{\mathcal{A}_{q^{n-t}}\mathcal{D}_{q}^{t}L_{j}(\mathbf{f})}^{0}(x)\right\}
\end{eqnarray*}
for all $x \in \mathbb{C}$ satisfying $|x| > \mathcal{R}(n, q)$. By the definition of $\tilde{N}_{AW}^{[1]}(r, 1/(L_{j}(\mathbf{f})))$, we can obtain the claim (\ref{aeq5.4.1}).

Let $\mathbf{g} = (1, f_{1}/f_{0}, \ldots, f_{n}/f_{0})$. Then $L_{j}(\mathbf{f}) = f_{0}L_{j}(\mathbf{g}) \not\equiv 0$ because $f_{0}, \ldots, f_{n}$ are linearly independent. Hence, from Lemma \ref{le3.1}, we have
\begin{eqnarray*}
    N\left(r, \frac{1}{L_{j}(\mathbf{f})}\right) &\leq& N\left(r, \frac{1}{f_{0}}\right) + N\left(r, \frac{1}{L_{j}(\mathbf{g})}\right) \\
    &\leq& T_{f}(r) + T(r, L_{j}(\mathbf{g})) + O(1) \\
    &=& K T_{f}(r),
\end{eqnarray*}
where $j = 1, \ldots, p$ and $K$ is a finite positive number depending only on the linear form $L_{j}$. By a similar analysis to Lemma \ref{le3.3}, we have
\begin{eqnarray}\label{aeq5.4.2}
    \sum_{j=1}^{p}N\left(r, \frac{1}{\eta_{q}^{\delta(n)}L_{j}(\mathbf{f})}\right) = \sum_{j=1}^{p}N\left(r, \frac{1}{L_{j}(\mathbf{f})}\right) + S_{\log}(r, f).
\end{eqnarray}
Thus, (\ref{eq5.4.3}) follows by combining (\ref{aeq5.4.1}) and (\ref{aeq5.4.2}).
\end{proof}

\section{Askey-Wilson version of SMT with hypersurfaces}\label{sec6}

    As far as we know, Shi and Yan were the first to consider the number of irreducible components of hypersurfaces to establish a new SMT for holomorphic maps into projective space \cite{Shi}. However, in their key lemma (\cite[Lemma 2.4]{Shi}), the reorganization of the homogeneous polynomial is restricted to having only two elements. In this paper, we extend their key lemma and generalize the corresponding SMT for the Askey-Wilson operator. First, we introduce a new definition for a homogeneous polynomial.

    \smallskip

\begin{definition}[Min-max $s^{'}$-th decomposition]\label{def6.1} 
    Let $Q$ be a homogeneous polynomial in $\overline{k}[x_{0}, \ldots, x_{n}]$ of degree $d$, and suppose it has $s$ distinct irreducible factors. Fix a positive integer $s^{'} \leq s$, and let 
    \[
\begin{aligned}
    \Omega_{Q}(s^{'}) = \big\{ &\{R_{1}, \ldots, R_{s^{'}}\} :  \; Q = R_{1}\cdots R_{s^{'}}, \\
    & R_{j} \text{ is a non-constant homogeneous polynomial in } \overline{k}[x_{0}, \ldots, x_{n}], \\
    & j = 1, \ldots, s^{'}, \text{ and there are no common factors between any two } R_{j} \big\}.
\end{aligned}
\]
    We call a decomposition $\mathcal{R}^{*} = \{R_{1}^{*}, \ldots, R_{s^{'}}^{*}\} \in \Omega_{Q}(s^{'})$ the \textit{min-max $s^{'}$-th decomposition of $Q$} if
    \begin{eqnarray*}
        \max\limits_{1 \leq j \leq s^{'}} \deg R_{j}^{*} = \min\limits_{\mathcal{R} \in \Omega_{Q}(s^{'})} \max\limits_{R_{j} \in \mathcal{R}} \deg R_{j}.
    \end{eqnarray*}
    We also define $d_{Q}^{*}(s^{'}) = \max\limits_{1 \leq j \leq s^{'}} \deg R_{j}^{*}$.
\end{definition}

\smallskip

From now on, we adopt the notation $\lceil x \rceil$ ($\lfloor x\rfloor$) to express the smallest (largest) integer greater (less) than or equal to $x\in\mathbb{R}$.

The following lemma is a generalization of the key lemma in \cite[Lemma 2.4]{Shi} (for $s^{'}=2$), while the proof is completely different.

\smallskip

\begin{lemma}\label{le6.2}
    Under the assumptions of Definition \ref{def6.1}, we have
    \begin{eqnarray}\label{eq6.2}
        d_{Q}^{*}(s^{'}) \leq \max\left\{d - s + 1, \left\lceil\frac{d}{s^{'}}\right\rceil\right\}.
    \end{eqnarray}
\end{lemma}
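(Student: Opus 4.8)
My plan is to first translate the statement into a purely combinatorial fact about partitioning positive integers, and then to prove that fact by an induction on $s^{'}$ assisted by a greedy (scheduling) argument.

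\emph{Reduction.} Factor $Q=P_{1}^{a_{1}}\cdots P_{s}^{a_{s}}$ into distinct irreducible homogeneous polynomials with $a_{i}\geq 1$, and set $e_{i}=a_{i}\deg P_{i}\geq 1$, so that $e_{1}+\cdots+e_{s}=d$. In any $\mathcal{R}=\{R_{1},\dots,R_{s^{'}}\}\in\Omega_{Q}(s^{'})$ the requirement that the $R_{j}$ have no common factors forces, for each $i$, the whole block $P_{i}^{a_{i}}$ to lie inside a single $R_{j}$; conversely any partition of $\{1,\dots,s\}$ into $s^{'}$ non-empty parts yields such a decomposition. Hence $\Omega_{Q}(s^{'})$ corresponds exactly to the partitions of the multiset $\{e_{1},\dots,e_{s}\}$ into $s^{'}$ non-empty groups, with $\deg R_{j}$ the sum of the group assigned to $R_{j}$, and the statement reduces to: any positive integers $e_{1},\dots,e_{s}$ with sum $d$ can be split into $s^{'}$ non-empty groups each of sum at most $B:=\max\{d-s+1,\lceil d/s^{'}\rceil\}$. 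I will use throughout that $e_{i}=d-\sum_{j\neq i}e_{j}\leq d-(s-1)=d-s+1\leq B$, so a single integer always fits under $B$ (in particular the case $s=s^{'}$ is trivial via singletons), and that at most $s^{'}-1$ of the $e_{i}$ exceed $\lceil d/s^{'}\rceil$ (otherwise their sum would exceed $d$).

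\emph{Induction on $s^{'}$.} The case $s^{'}=1$ is trivial since then $B=d$. If some $e_{i}>\lceil d/s^{'}\rceil$, I put $e_{i}$ alone in one group and invoke the inductive hypothesis on the remaining $s-1$ integers (of sum $d-e_{i}$) with $s^{'}-1$ groups; the bound it produces is $\max\{(d-e_{i})-(s-1)+1,\ \lceil(d-e_{i})/(s^{'}-1)\rceil\}$, whose first term equals $(d-s+1)-(e_{i}-1)\leq d-s+1$ and whose second term is $\leq\lceil d/s^{'}\rceil$ because $e_{i}>\lceil d/s^{'}\rceil$ gives $d-e_{i}<d-\lceil d/s^{'}\rceil\leq(s^{'}-1)\lceil d/s^{'}\rceil$; together with the singleton group (of size $e_{i}\leq d-s+1\leq B$) this settles the case. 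I may therefore assume every $e_{i}\leq M:=\lceil d/s^{'}\rceil$. Then I sort $e_{1}\geq\cdots\geq e_{s}$ and assign the integers in this order, each to a currently lightest group (the longest-processing-time rule). If the heaviest final group is a singleton, its value is $\leq e_{1}\leq d-s+1\leq B$. Otherwise, letting $e_{\ell}$ be the last integer placed into it, that integer was placed when all $s^{'}$ groups were already non-empty, so $\ell\geq s^{'}+1$ and hence $e_{\ell}\leq d/\ell$; and at that moment the receiving group had total at most $\frac{1}{s^{'}}(e_{1}+\cdots+e_{\ell-1})$, so the final value $C$ of this group satisfies $C\leq\frac{1}{s^{'}}(e_{1}+\cdots+e_{\ell-1})+e_{\ell}\leq\frac{1}{s^{'}}\bigl(d-(s-\ell+1)\bigr)+e_{\ell}$. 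I then show $C\leq B$ by a short case analysis in $\ell$ (with the extreme values $\ell=s^{'}+1$ and $\ell=s$ the ones to watch), using $e_{\ell}\leq M$, $e_{\ell}\leq d/\ell$, and the integrality of $C$; for instance at $\ell=s$ either $d-s+1$ already dominates, or the smallest integer equals $1$ and rounding the estimate down gives $C\leq\lceil d/s^{'}\rceil$.

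\emph{Main obstacle.} The delicate step is exactly this last one: the crude longest-processing-time estimate only yields $C\lesssim 2d/(s^{'}+1)$, which in general exceeds both $d-s+1$ and $\lceil d/s^{'}\rceil$. What rescues the argument is that precisely when $2d/(s^{'}+1)$ is large, $e_{\ell}$ is forced to be small (too many integers of comparable size would push the total past $d$), so that the integrality of $C$ can be exploited to round the bound down to $B$; reconciling this with the two terms of $B$ over all admissible $\ell$ is the technical core. An alternative worth trying is an extremal argument: take a decomposition minimising $(\deg R_{1},\dots,\deg R_{s^{'}})$ lexicographically (with the degrees sorted non-increasingly); if $\deg R_{1}>B$ then $R_{1}$ is not a single prime power (a counting argument on the remaining $s-1$ prime powers rules that out), each prime power in $R_{1}$ must have degree $\geq\deg R_{1}-\deg R_{s^{'}}$ (otherwise moving the smallest of them from $R_{1}$ to $R_{s^{'}}$ lexicographically lowers the vector), and combining this with $\deg R_{1}\leq d-s+(\#\text{prime powers in }R_{1})$ and $\#\text{prime powers in }R_{1}\leq s-s^{'}+1$ should contradict $\deg R_{1}>\max\{d-s+1,\lceil d/s^{'}\rceil\}$.
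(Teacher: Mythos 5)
Your reduction to a combinatorial scheduling problem is exactly the setting of the paper's proof: the blocks $P_i^{a_i}$ must go whole into a single $R_j$, so the problem is to distribute positive integers $e_1,\dots,e_s$ of sum $d$ into $s'$ non-empty groups, and both you and the paper then sort the blocks by decreasing degree and assign each to a currently lightest group. Your preliminary induction on $s'$, peeling off any block with $e_i>\lceil d/s'\rceil$ into a singleton group, is correct and is a clean way to reach the case where every block has degree at most $\lceil d/s'\rceil$. The problem is that in this remaining case — which must be handled for every $s'$, so it is the heart of the lemma, not a degenerate base case — you do not actually prove the bound. The only quantitative estimate you give is the one-shot averaging bound $C\le\frac{1}{s'}(e_1+\cdots+e_{\ell-1})+e_\ell$ with $e_\ell\le d/\ell$, and you yourself concede that this only yields roughly $2d/(s'+1)$, which can exceed both $d-s+1$ and $\lceil d/s'\rceil$; the promised ``short case analysis in $\ell$'' using integrality is precisely the step you label the technical core and leave undone, and the alternative lexicographic-minimum argument ends with ``should contradict'' rather than a verification. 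So there is a genuine gap at the decisive inequality.

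What the paper does at exactly this point is replace the single aggregate estimate by an invariant maintained throughout the greedy distribution: grouping the blocks into ``stages'' by their common degree $k$, it proves by reverse induction that after all blocks of degree $\ge k$ are placed (any $k\ge 2$) the maximal group degree satisfies $I^{(k)}_{\max}\le d^{(k)}-s^{(k)}+1$, where $d^{(k)}$ and $s^{(k)}$ are the total degree and the number of blocks used so far, and moreover that whenever the maximum increases during stage $k$ the spread of group degrees is at most $k$. The endgame is then a dichotomy at the stage of degree-one blocks: either the maximum does not increase there, giving $I_{\max}\le d-s+1$, or all group degrees lie within $1$ of each other, giving $I_{\max}\le\lceil d/s'\rceil$. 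Your averaging bound discards the information this invariant retains (how tightly the loads are clustered when the maximum last grew), which is why it cannot be rounded down to $\max\{d-s+1,\lceil d/s'\rceil\}$ in general; to complete your proposal you would need to prove some such running invariant (or an equivalent exchange/extremal statement), not merely invoke integrality.
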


\smallskip

    \begin{proof}
Let $Q = Q_{1} \cdots Q_{s}$, where $Q_{i}$ is a non-constant factor of $Q$ such that there are no common factors between any two $Q_{i}$, and $\deg Q_{i} = d_{i}$ for $i = 1, \ldots, s$. Without loss of generality, let $d_{1} \geq d_{2} \geq \cdots \geq d_{s} \geq 1$. The assertion clearly holds when $s^{'} = 1$ or $s = s^{'}$. Hence, we will assume $s > s^{'} \geq 2$. We first define an algorithm to distribute $d_{i}$.

\smallskip

%\noindent
\textbf{Step 1}: Let $E_{j} = \{j\}$ and $R_{j} = Q_{j}$ for $j = 1, \ldots, s^{'}$, then $\deg R_{1} = d_{1}, \ldots, \deg R_{s^{'}} = d_{s^{'}}$.

%\noindent
\textbf{Step 2}: Suppose $d_{1}, \ldots, d_{k-1}$ have been used, where $k \in \mathbb{N}$ (i.e., $\{1, \ldots, k-1\} = \cup_{j=1}^{s^{'}} E_{j}$). Now, we proceed to distribute $d_{k}$. Denote $j^{*} \in \{1, \ldots, s^{'}\}$ such that 
\[
\deg R_{j^{*}} = \min\limits_{1 \leq j \leq s^{'}} \deg R_{j}.
\]
(There may be several $j^{*}$ for which the equality holds, but any one of them can be chosen.) Let $E_{j^{*}} = E_{j^{*}} \cup \{k\}$ and $R_{j^{*}} = R_{j^{*}} Q_{k}$, then 
\[
\deg R_{j^{*}} = \deg R_{j^{*}} + d_{k}.
\]
Repeat this step until all $d_{i}$, $i = 1, \ldots, s$, are exhausted.

        \smallskip

It is clear that any two of $R_{j}$ do not have common factors; hence,
\begin{eqnarray*}
    \{R_{1}, \ldots, R_{s^{'}}\} \in \Omega_{Q}(s^{'}).
\end{eqnarray*}
Therefore,
\begin{eqnarray*}
    \max_{1 \leq j \leq s^{'}} \deg R_{j}^{*} \leq \max_{1 \leq j \leq s^{'}} \deg R_{j}.
\end{eqnarray*}
Thus, it is sufficient to estimate the upper bound for $\max_{1 \leq j \leq s^{'}} \deg R_{j}$. Note that the larger $d_{i}$ values are distributed first in this algorithm. Suppose $d_{1} = N$, then $\{d_{i}\}_{i=1}^{s} \subset \{1, \ldots, N\}$. 

We say the decomposition is \textit{in the $k$-th stage} ($k \in \{1, \ldots, N\}$) if we have just used all $d_{i}$ values equal to $k$. Denote by $d^{(k)}$ and $s^{(k)}$ be the summation of $d_{i}$ and their numbers we have used in $k$ stage. Define $I_{\max}^{(k)}$ and $I_{\min}^{(k)}$ as the maximum and minimum of the degrees of $R_{j}$ for $j = 1, \ldots, s^{'}$ in the $k$-th stage, respectively. That is
\begin{eqnarray*}
    d^{(k)}&:=&\sum_{j=1}^{s^{'}}\deg R_{j}=\sum_{d_{i}\geq k}d_{i},\\
    s^{(k)}&:=&\#\{d_{i}\geq k\},\\
    I_{\max}^{(k)}&:=&\max_{1\leq j\leq s^{'}}\deg R_{j},\\
    I_{\min}^{(k)}&:=&\min_{1\leq j\leq s^{'}}\deg R_{j},\\
\end{eqnarray*}
where $\{R_{1}, \cdots, R_{s^{'}}\}$ is in the $k$-th stage. In particular, $d^{(1)}=d$ and $s^{(1)}=s$. We also adopt the notation $I_{\max}:=I_{\max}^{(1)}$ and $I_{\min}:=I_{\min}^{(1)}$. (A specific example following this algorithm can be seen from Example \ref{ex1} below.)

Now we claim that
\begin{eqnarray}\label{aeq6.2.1}
    I_{\max}^{(k)} \leq d^{(k)} - s^{(k)} + 1,
\end{eqnarray}
for $k = 2, \ldots, N$.

Indeed, for $k = N$, which means only $N$ should be distributed. Meanwhile, $d^{(N)} = N s^{(N)}$. Suppose $s^{(N)} = ps^{'} + q$, where $p \in \mathbb{N}_{0}$ and $q \in \{0, 1, \ldots, s^{'} - 1\}$. Then, 
\[
I_{\max}^{(N)} = N\left\lceil \frac{s^{(N)}}{s^{'}} \right\rceil \quad \text{and} \quad d^{(N)} - s^{(N)} + 1 = (N-1)s^{(N)} + 1.
\]
If $q = 0$, then $p \geq 1$ because $d_{1}=N$, and then we have
\begin{eqnarray*}
    d^{(N)} - s^{(N)} + 1 - I_{\max}^{(N)} &=& (N - 1)p s^{'} + 1 - Np \\
    &\geq& 2(N - 1)p + 1 - Np \\
    &=& (N - 2)p + 1 > 0.
\end{eqnarray*}
If $q \geq 1$, we have
\[
\begin{aligned}
    d^{(N)} - s^{(N)} + 1 - I_{\max}^{(N)} &= (N - 1)(ps^{'} + q) + 1 - N(p + 1) \\
    &\geq 2p(N - 1) - Np + q(N - 1) - N + 1 \\
    &\geq (N - 2)p + (N - 1)(q - 1) \geq 0.
\end{aligned}
\]

Assume that (\ref{aeq6.2.1}) holds for the $(k+1)$-th stage. Now, it is sufficient to prove that it holds for the $k$-th stage with $k \geq 2$, as the claim will then follow by induction.

From the algorithm, it is easy to verify that $I_{\max}^{(k)} \geq I_{\max}^{(k+1)}$. If equality holds, note that 
\[
d^{(k)} - s^{(k)} = (d^{(k+1)} - s^{(k+1)}) + (lk - l) \geq d^{(k+1)} - s^{(k+1)},
\]
where $l = \#\{i : d_{i} = k, \; 1 \leq i \leq s\}$. Then, we have
\[
I_{\max}^{(k)} = I_{\max}^{(k+1)} \leq d^{(k+1)} - s^{(k+1)} + 1 \leq d^{(k)} - s^{(k)} + 1.
\]

If $I_{\max}^{(k)} > I_{\max}^{(k+1)}$, then we claim that $I_{\max}^{(k)} - I_{\min}^{(k)} \leq k$. In fact, in this case, when we are using $k$, there exists a moment such that $\deg R_{j} \leq I_{\max}^{(k+1)}$ for all $j = 1, \ldots, s^{'}$. However, after adding one $k$, we have $\deg R_{j^{*}} > I_{\max}^{(k+1)}$ for some $j^{*} \in \{1, \ldots, s^{'}\}$. Let $I_{\min}^{*}$ denote the minimum degree of $R_{j}$ at this stage, and let $I_{\min}^{*+lk}$ and $I_{\max}^{*+lk}$ denote the minimum and maximum degrees of $R_{j}$, respectively, after adding $lk$ in the stage, where $l \in \mathbb{N}$. Hence,
\begin{eqnarray*}
    I_{\max}^{*+k} = I_{\min}^{*} + k > I_{\max}^{(k+1)} \geq I_{\min}^{*},
\end{eqnarray*}
which implies that $I_{\max}^{*+k} - I_{\min}^{*+k} \leq I_{\max}^{*+k} - I_{\min}^{*} = k$. 

Now, assume that $I_{\max}^{*+lk} - I_{\min}^{*+lk} \leq k$ holds for some $l \in \mathbb{N}$. If there are no additional $k$ values to use, then 
\[
I_{\max}^{(k)} - I_{\min}^{(k)} = I_{\max}^{*+lk} - I_{\min}^{*+lk} \leq k.
\]
If there are still some $k$ values remaining, then 
\[
I_{\max}^{*+(l+1)k} = I_{\min}^{*+lk} + k,
\]
as otherwise it would contradict the assumption that $I_{\max}^{*+lk} - I_{\min}^{*+lk} \leq k$. Therefore,
\[
I_{\max}^{*+(l+1)k} - I_{\min}^{*+(l+1)k} \leq I_{\max}^{*+(l+1)k} - I_{\min}^{*+lk} = k.
\]
Thus, $I_{\max}^{(k)} - I_{\min}^{(k)} \leq k$ holds for all $k \in \mathbb{N}$ in this way.

Hence, we can define $\{\nu + j\}_{j=0}^{k}$ as all the degrees. Assume $t_{j}$ represents the number of $\nu + j$, $j = 0, \ldots, k$, in the $k$-th stage, where $t_{k} \geq 1$ and $\nu \geq k$ (since every $\deg R_{j}$ adds at least $k$, otherwise $I_{\max}^{(k)} = I_{\max}^{(k+1)}$). Then, we have
\begin{eqnarray*}
    I_{\max}^{(k)} = \nu + k, \quad d^{(k)} = \sum_{j=0}^{k} t_{j} (\nu + j), \quad s^{'} = \sum_{j=0}^{k} t_{j}.
\end{eqnarray*}
For any $R_{j}$ in the $k$-th stage, $j = 1, \ldots, s^{'}$, the component is no less than $k$. Hence, there are at most $\frac{\nu + j}{k}$ components in $R_{j}$. Thus,
\begin{eqnarray*}
    s^{(k)} \leq \sum_{j=0}^{k} t_{j} \frac{\nu + j}{k}.
\end{eqnarray*}
Since $k \geq 2$, $t_{k} \geq 1$, and $s^{'} \geq 2$, we have
        \begin{eqnarray*}
            &&d^{(k)}-s^{(k)}+1-I_{\max}^{(k)}\\
            &\geq& \sum_{j=0}^{k}t_{j}(\nu+j)\frac{k-1}{k}+1-\nu-k\\
            &=&s^{'}\nu\frac{k-1}{k}-\nu+\sum_{j=0}^{k}t_{j}j\frac{k-1}{k}+1-k\\
            &\geq&\nu\frac{s^{'}(k-1)-k}{k}+t_{k}k\frac{k-1}{k}+1-k\\
            &\geq&\nu\frac{k-2}{k}\geq 0.
        \end{eqnarray*}

Therefore, the claim (\ref{aeq6.2.1}) follows. Now, we only need to address the first stage ($k = 1$). 

If $I_{\max} = I_{\max}^{(1)} = I_{\max}^{(2)}$, then according to the decreasing properties of $d^{(k)} - s^{(k)} + 1$, we have
\[
I_{\max} \leq d^{(1)} - s^{(1)} + 1 = d - s + 1.
\]

If $I_{\max} = I_{\max}^{(1)} < I_{\max}^{(2)}$, then $I_{\max} - I_{\min} \leq 1$ due to the analysis above, which means there are at most two distinct values, $I_{\max}$ and $I_{\max} - 1$. Let $t \geq 1$ and $s^{'} - t$ denote the numbers of $I_{\max}$ and $I_{\max} - 1$, respectively. Then,
\begin{eqnarray*}
    \left\lceil \frac{d}{s^{'}} \right\rceil = \left\lceil \frac{t I_{\max} + (s^{'} - t)(I_{\max} - 1)}{s^{'}} \right\rceil=\left\lceil I_{\max}-\frac{s^{'}-t}{s^{'}} \right\rceil \geq I_{\max}.
\end{eqnarray*}

Thus, we complete the proof.
        %get a decomposition $\{E_{j}\}_{j=1}^{s^{'}}$ of $\{1, \cdots, s\}$,
    \end{proof}

\noindent\textbf{Remark}: This lemma can also be applied to the classical Second Main Theorem for holomorphic maps $f: \mathbb{C} \to \mathbb{P}^{n}(\mathbb{C})$ with hypersurfaces to obtain more general results. This can be achieved by replacing \cite[Lemma 2.4]{Shi} with the above Lemma \ref{le6.2} in their proof.

\smallskip

\begin{example}\label{ex1}
    Let $Q=\prod_{i=1}^{10}Q_{i}$ be a homogeneous polynomial in $\overline{k}[x_{0}, \ldots, x_{n}]$ where $Q_{i}$ are all homogeneous polynomials in $\overline{k}[x_{0}, \ldots, x_{n}]$ without common factors. Assume $\{\deg Q_{i}\}_{i=1}^{10}=\{6,5, 5, 5, 5, 5, 3, 2, 2, 1\}$. Letting $s^{'}=3$, we proceed to find the decomposition $Q=R_{1}R_{2}R_{3}$ following the algorithm in the proof of Lemma \ref{le6.2}. The following table shows how the algorithm works.
\begin{table}[h]
\caption{The procedure to distribute $Q_{i}$ following the algorithm}\label{tab2}
\begin{tabular*}{\textwidth}{@{\extracolsep\fill}lccccccc}
\toprule%
& \multicolumn{3}{@{}c@{}}{Degree of polynomials} & \multicolumn{4}{@{}c@{}}{Corresponding quantities} \\\cmidrule{2-4}\cmidrule{5-8}%
$k$-th stage & $\deg R_{1}$ & $\deg R_{2}$ & $\deg R_{3}$ & $d^{(k)}$ & $s^{(k)}$ & $I_{\max}^{(k)}$ & $I_{\min}^{(k)}$ \\
\midrule
$6$-th stage  & $6_{1}$ &  &  & 6 & 1 & 6 & 0 \\
$5$-th stage  & $5_{6}$ & $5_{2}+5_{4}$  & $5_{3}+5_{5}$  & 31 & 6 & 11 & 10\\
$4$-th stage  &  &  &  & 31 & 6 & 11 & 10\\
$3$-rd stage  & & $3_{7}$ &  & 34 & 7 & 13 & 10\\
$2$-nd stage  & $2_{9}$ &  & $2_{8}$  & 38 & 9 & 13 & 12\\
$1$-st stage  &  &  & $1_{10}$  & 39 & 10 & 13 & 13\\
\botrule
\end{tabular*}
\footnotetext{Note: The subscript means the order in which $Q_{i}$ is distributed. When we just finished distributing the second $"5"$, we can see that $\deg R_{j}\leq I_{\max}^{(6)}$ for all $j=1, 2, 3$, while $\deg R_{2}>I_{\max}^{(6)}$ after adding one $"5"$. In this case, $I^{*}_{\max}=6$, $I^{*}_{\min}=5$, $I^{*+5}_{\max}=10$ and $I^{*+5}_{\min}=5$.}
\end{table}
\end{example}

\medskip

When we consider the truncated SMT with a decomposition of the polynomials, some relationships between $\nu_{f}^{0}(x)$ and $\nu_{f_{i}}^{0}(x)$ are needed, where $f = \prod f_{i}$. Let us first recall some potential properties of the differential operator used in \cite{Shi}.

Let $M, m \in \mathbb{N}$, and let $f_{1}, \ldots, f_{m}$ be arbitrary entire functions with $f = \prod_{i=1}^{m} f_{i}$. Then we have
\begin{eqnarray}\label{aeq6.111}
    \min\limits_{0\leq t\leq M}\nu_{f^{(t)}}^{0}(x) \geq \sum_{i=1}^{m}\min\limits_{0\leq t\leq M}\nu_{f_{i}^{(t)}}^{0}(x) \geq \min\limits_{0\leq t\leq mM}\nu_{f^{(t)}}^{0}(x)
\end{eqnarray}
for any $x\in\mathbb{C}$. Since $\nu_{f}^{0}(x)=\sum_{i=1}^{m}\nu_{f_{i}}^{0}(x)$, it follows from (\ref{aeq5.6.1.1.1}) that
\begin{eqnarray*}
    \overline{n}^{(mM)}\left(r, \frac{1}{f}\right)\geq \sum_{i=1}^{m} \overline{n}^{(M)}\left(r, \frac{1}{f_{i}}\right)\geq \overline{n}^{(M)}\left(r, \frac{1}{f}\right).
\end{eqnarray*}

For the first inequality of (\ref{aeq6.111}), we only provide the proof when $m=2$ since the other cases can be obtained by a simple induction. Assume $f=f_{1}f_{2}$, then for any $0\leq t\leq M$,
\begin{eqnarray*}
    \nu_{f^{(t)}}^{0}(x)&=&\nu_{\sum_{j=0}^{t}f_{1}^{(j)}f_{2}^{(t-j)}}^{0}(x)\geq \min_{0\leq j\leq t}\nu_{f_{1}^{(j)}}^{0}(x)+\min_{0\leq j\leq t}\nu_{f_{2}^{(j)}}^{0}(x)\\
    &\geq& \min_{0\leq j\leq M}\nu_{f_{1}^{(j)}}^{0}(x)+\min_{0\leq j\leq M}\nu_{f_{2}^{(j)}}^{0}(x).
\end{eqnarray*}
Hence
\begin{eqnarray*}
    \min\limits_{0\leq t\leq M}\nu_{f^{(t)}}^{0}(x) \geq \min_{0\leq t\leq M}\nu_{f_{1}^{(t)}}^{0}(x)+\min_{0\leq t\leq M}\nu_{f_{2}^{(t)}}^{0}(x).
\end{eqnarray*}

For the second inequality, if $\nu_{f}^{0}(x)\leq mM$, then $\min\limits_{0\leq t\leq mM}\nu_{f^{(t)}}^{0}(x)=0$, and hence it is valid. If $\nu_{f}^{0}(x)> mM$, then there exist some $i\in\{1, \cdots, m\}$ such that $\nu_{f_{i}}^{0}(x)> M$. Let $E=\{i:\nu_{f_{i}}^{0}(x)> M\}$ and $p=\# E>0$, then we have $\min\limits_{0\leq t\leq M}\nu_{f_{i}^{(t)}}^{0}(x)=\nu_{f_{i}}^{0}(x)-M>0$ when $i\in E$ and $\sum_{i\in E}\nu_{f_{i}}^{0}(x)\geq \sum_{i=1}^{m}\nu_{f_{i}}^{0}(x)-(m-p)M$. Thus
\begin{eqnarray*}
    \sum_{i=1}^{m}\min\limits_{0\leq t\leq M}\nu_{f_{i}^{(t)}}^{0}(x)&=&\sum_{i\in E} \min\limits_{0\leq t\leq M}\nu_{f_{i}^{(t)}}^{0}(x)=\sum_{i\in E}\left(\nu_{f_{i}}^{0}(x)-M\right)\\
    &\geq& \sum_{i=1}^{m}\nu_{f_{i}}^{0}(x)-mM=\nu_{f}^{0}(x)-mM=\min\limits_{0\leq t\leq mM}\nu_{f^{(t)}}^{0}(x),
\end{eqnarray*}
which proves the second inequality of (\ref{aeq6.111}).

Similar to the first inequality in (\ref{aeq6.111}), we have the following lemma for the operators $\mathcal{A}_{q}$ and $\mathcal{D}_{q}$.

\smallskip

\begin{lemma}\label{le6.4}
    Let $M, m \in \mathbb{N}$ and $f = \prod_{i=1}^{m} f_{i}$, where $f_{1}, \ldots, f_{m}$ are entire functions. Then,
    \begin{eqnarray*}
        \min\limits_{0 \leq t \leq M}\left\{\nu_{\mathcal{A}_{q^{M-t}}\mathcal{D}_{q}^{t}f}^{0}(x)\right\} \geq \sum_{i=1}^{m}\min\limits_{0 \leq t \leq M}\left\{\nu_{\mathcal{A}_{q^{M-t}}\mathcal{D}_{q}^{t}f_{i}}^{0}(x)\right\},
    \end{eqnarray*}
    for all $x \in \mathbb{C}$ satisfying $|x| > \mathcal{R}(M, q)$.
\end{lemma}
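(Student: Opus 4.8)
The plan is to reduce the asserted inequality, via Lemma~\ref{le5.3}, to the corresponding statement for the one-sided shift operators $\eta_{q}^{M-2t}$, for which the proof is immediate. Since $0\leq t\leq M$ forces $|M-2t|\leq M$, the single restriction $|x|>\mathcal{R}(M,q)$ simultaneously guarantees that all the operators $\eta_{q}^{M-2t}$ are analytic and obey the shift formula (\ref{equa2.1}); this is exactly the range in which Lemma~\ref{le5.3} applies. Applying that lemma to $f$ and separately to each $f_{i}$ gives, for $|x|>\mathcal{R}(M,q)$,
\[
\min_{0\leq t\leq M}\bigl\{\nu_{\mathcal{A}_{q^{M-t}}\mathcal{D}_{q}^{t}f}^{0}(x)\bigr\}=\min_{0\leq t\leq M}\bigl\{\nu_{\eta_{q}^{M-2t}f}^{0}(x)\bigr\},
\]
and likewise $\min_{0\leq t\leq M}\{\nu_{\mathcal{A}_{q^{M-t}}\mathcal{D}_{q}^{t}f_{i}}^{0}(x)\}=\min_{0\leq t\leq M}\{\nu_{\eta_{q}^{M-2t}f_{i}}^{0}(x)\}$ for $i=1,\dots,m$.

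Next I would use that, on the region $|x|>\mathcal{R}(M,q)$, each $\eta_{q}^{k}$ acts by precomposition with the analytic map $x\mapsto(q^{k/2}z+q^{-k/2}z^{-1})/2$, where $x=(z+z^{-1})/2$; in particular $\eta_{q}^{k}(f_{1}\cdots f_{m})=(\eta_{q}^{k}f_{1})\cdots(\eta_{q}^{k}f_{m})$ there. Since $f_{1},\dots,f_{m}$ are entire, each $\eta_{q}^{M-2t}f_{i}$ is analytic near such an $x$ (by Theorem~\ref{the2.3} and the remarks following it), so the zero orders are well-defined nonnegative integers, and multiplicativity yields
\[
\nu_{\eta_{q}^{M-2t}f}^{0}(x)=\sum_{i=1}^{m}\nu_{\eta_{q}^{M-2t}f_{i}}^{0}(x)\qquad(0\leq t\leq M).
\]

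Finally, the lemma follows from the elementary fact that a minimum of sums dominates the sum of the minima: for each fixed $x$ with $|x|>\mathcal{R}(M,q)$,
\[
\min_{0\leq t\leq M}\bigl\{\nu_{\eta_{q}^{M-2t}f}^{0}(x)\bigr\}=\min_{0\leq t\leq M}\Bigl\{\sum_{i=1}^{m}\nu_{\eta_{q}^{M-2t}f_{i}}^{0}(x)\Bigr\}\geq\sum_{i=1}^{m}\min_{0\leq t\leq M}\bigl\{\nu_{\eta_{q}^{M-2t}f_{i}}^{0}(x)\bigr\},
\]
and combining this with the two displays above gives the claim. There is no genuine obstacle here: unlike the derivative case (\ref{aeq6.111}), where the Leibniz rule must be unwound term by term, Lemma~\ref{le5.3} converts the averaged difference operators $\mathcal{A}_{q^{M-t}}\mathcal{D}_{q}^{t}$ into plain shifts, after which only multiplicativity of $\eta_{q}^{k}$ and the trivial min-of-sums bound are required. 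The only points needing care are the restriction $|x|>\mathcal{R}(M,q)$ inherited from Lemma~\ref{le5.3}, and the fact that the functions $\eta_{q}^{M-2t}f_{i}$ stay entire, both of which have already been established.
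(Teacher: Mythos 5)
Your proposal is correct and follows essentially the same route as the paper: apply Lemma \ref{le5.3} to replace the operators $\mathcal{A}_{q^{M-t}}\mathcal{D}_{q}^{t}$ by the shifts $\eta_{q}^{M-2t}$, use multiplicativity $\nu_{\eta_{q}^{M-2t}f}^{0}(x)=\sum_{i}\nu_{\eta_{q}^{M-2t}f_{i}}^{0}(x)$, and conclude with the bound that a minimum of sums dominates the sum of the minima. The extra care you take about analyticity and the range $|x|>\mathcal{R}(M,q)$ is left implicit in the paper but is consistent with its argument.
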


\smallskip

\begin{proof}
    By applying Lemma \ref{le5.3}, we have
    \begin{eqnarray*}
        &&\min\limits_{0 \leq t \leq M}\left\{\nu_{\mathcal{A}_{q^{M-t}}\mathcal{D}_{q}^{t}f}^{0}(x)\right\} = \min\limits_{0 \leq t \leq M}\left\{\sum_{i=1}^{m} \nu_{\eta_{q}^{M-2t}f_{i}}^{0}(x)\right\} \\
        &\geq& \sum_{i=1}^{m}\min\limits_{0 \leq t \leq M}\left\{\nu_{\eta_{q}^{M-2t}f_{i}}^{0}(x)\right\} = \sum_{i=1}^{m}\min\limits_{0 \leq t \leq M}\left\{\nu_{\mathcal{A}_{q^{M-t}}\mathcal{D}_{q}^{t}f_{i}}^{0}(x)\right\},
    \end{eqnarray*}
    for all $x \in \mathbb{C}$ satisfying $|x| > \mathcal{R}(M, q)$, which proves the assertion.
\end{proof}

Then we immediately have the following properties for $\tilde{N}_{AW}^{[M]}(r, 1/f)$.

\smallskip

\begin{lemma}\label{le6.4.1}
    Let $M, m\in\mathbb{N}$ and $f=\prod_{i=1}^{m}f_{i}$, where $f_{1}, \dots, f_{m}$ are entire functions. Then,
    \begin{eqnarray*}
        \tilde{N}_{AW}^{[M]}\left(r, \frac{1}{f}\right)\leq \sum_{i=1}^{m}\tilde{N}_{AW}^{[M]}\left(r, \frac{1}{f_{i}}\right)+O(\log r).
    \end{eqnarray*}
\end{lemma}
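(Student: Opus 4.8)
The plan is to reduce the statement to a summand-by-summand inequality valid on the region $|x|>\mathcal{R}(M,q)$, to absorb the finitely many points of the complementary disc into an $O(1)$ term, and then to integrate. Throughout we may assume each $f_{i}\not\equiv0$, as otherwise $f\equiv0$.

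First, since $\delta(M)\in\{-1,0\}$, the operator $\eta_{q}^{\delta(M)}$ evaluates its argument at a single point determined by $x$ (the point $x$ itself when $M$ is even, or the point given by the $-1$ case of the definition when $M$ is odd), and is therefore multiplicative: $\eta_{q}^{\delta(M)}(fg)=(\eta_{q}^{\delta(M)}f)(\eta_{q}^{\delta(M)}g)$. Hence $\eta_{q}^{\delta(M)}f=\prod_{i=1}^{m}\eta_{q}^{\delta(M)}f_{i}$ and $\nu_{\eta_{q}^{\delta(M)}f}^{0}(x)=\sum_{i=1}^{m}\nu_{\eta_{q}^{\delta(M)}f_{i}}^{0}(x)$ for every $x\in\mathbb{C}$. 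On the other hand, Lemma \ref{le6.4} gives $\min_{0\leq t\leq M}\{\nu_{\mathcal{A}_{q^{M-t}}\mathcal{D}_{q}^{t}f}^{0}(x)\}\geq\sum_{i=1}^{m}\min_{0\leq t\leq M}\{\nu_{\mathcal{A}_{q^{M-t}}\mathcal{D}_{q}^{t}f_{i}}^{0}(x)\}$ whenever $|x|>\mathcal{R}(M,q)$. Subtracting, the summand defining $\tilde{n}_{AW}^{[M]}(r,1/f)$ at such a point $x$ is bounded above by the sum over $i$ of the summands defining $\tilde{n}_{AW}^{[M]}(r,1/f_{i})$ at $x$; note also that each of these summands is nonnegative in this range, because $\delta(M)\in\{M-2t\}_{t=0}^{M}$ and Lemma \ref{le5.3} yield $\min_{0\leq t\leq M}\{\nu_{\mathcal{A}_{q^{M-t}}\mathcal{D}_{q}^{t}f}^{0}(x)\}=\min_{0\leq t\leq M}\{\nu_{\eta_{q}^{M-2t}f}^{0}(x)\}\leq\nu_{\eta_{q}^{\delta(M)}f}^{0}(x)$.

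Next, I would treat the disc $|x|\leq\mathcal{R}(M,q)$, whose radius depends only on $M$ and $q$: the entire functions $\eta_{q}^{\delta(M)}f$ and $\eta_{q}^{\delta(M)}f_{i}$ have only finitely many zeros inside it, so the total contribution of these points to $\tilde{n}_{AW}^{[M]}(r,1/f)-\sum_{i=1}^{m}\tilde{n}_{AW}^{[M]}(r,1/f_{i})$ is bounded independently of $r$. Together with the previous paragraph this gives $\tilde{n}_{AW}^{[M]}(r,1/f)\leq\sum_{i=1}^{m}\tilde{n}_{AW}^{[M]}(r,1/f_{i})+O(1)$, and feeding this bound into the definition of the integrated counting function $\tilde{N}_{AW}^{[M]}$ (the $O(1)$ bound on the difference of the $\tilde{n}$'s integrates, over $[1,r]$, to an $O(\log r)$ term, while the part over $[0,1]$ contributes only $O(1)$) yields the asserted inequality.

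There is no genuine obstacle here; the substance of the lemma lies entirely in Lemma \ref{le6.4} (and, underneath it, Lemma \ref{le5.3}). The only point requiring attention is the bookkeeping on the bounded exceptional disc $|x|\leq\mathcal{R}(M,q)$, where the shift operators need not be invertible and the summand-wise inequality can fail; one just has to check that this region costs at most an $O(1)$ term at the level of $\tilde{n}_{AW}^{[M]}$ and hence the customary $O(\log r)$ term at the level of $\tilde{N}_{AW}^{[M]}$, in keeping with the error terms used throughout Sections \ref{sec3}--\ref{sec5}.
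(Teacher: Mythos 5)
Your proposal is correct and follows essentially the same route as the paper: additivity of $\nu^{0}_{\eta_{q}^{\delta(M)}f}$ over the factors, Lemma \ref{le6.4} for the minimum term when $|x|>\mathcal{R}(M,q)$, and integration of the resulting summand-wise bound. The paper's own proof is just a terser version of this, leaving implicit the bookkeeping on the disc $|x|\leq\mathcal{R}(M,q)$ that you spell out as the source of the $O(\log r)$ term.
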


\begin{proof}
    Since $\nu_{\eta_{q}^{\delta(M)}f}^{0}(x)=\sum_{i=1}^{m}\nu_{\eta_{q}^{\delta(M)}f_{i}}^{0}(x)$ for any $x\in\mathbb{C}$, then the assertion follows according to the definition of (\ref{aeq5.5.2}) by applying Lemma \ref{le6.4}.
\end{proof}

\smallskip

However, there may not be any analogues of the second inequality in (\ref{aeq6.111}) for the operators $\mathcal{A}_{q}$ and $\mathcal{D}_{q}$ due to the discrete property of these operators. In this case, we introduce a new definition of the truncated counting function with a homogeneous polynomial, expressed in terms of 
\[
\sum_{i=1}^{m} \min\limits_{0 \leq t \leq M} \left\{ \nu_{\mathcal{A}_{q^{M-t}} \mathcal{D}_{q}^{t} f_{i}}^{0}(x) \right\},
\]
as follows.

Let a homogeneous polynomial $Q$ of degree $d$ define the hypersurface $D$ in $\mathbb{P}^{n}(\mathbb{C})$, and assume that $Q$ has a min-max $s^{'}$-th decomposition $Q = Q_{1} \cdots Q_{s^{'}}$. Then, we define \textit{the Askey-Wilson version of the zero counting function truncated by $M$ with the $s^{'}$-th decomposition of $Q$ for a holomorphic curve $f$} as

    \begin{eqnarray*}
        \tilde{n}_{AW, f}^{[M](s^{'})}\left(r, Q\right)=\sum\limits_{x\in D(0, r)}\left(\nu_{\eta_{q}^{\delta(M)}Q(\textbf{f})}^{0}(x)-\sum_{i=1}^{s^{'}}\min\limits_{0\leq t\leq M}\left\{\nu_{\mathcal{A}_{q^{M-t}}\mathcal{D}_{q}^{t}Q_{i}(\textbf{f})}^{0}(x)\right\}\right),
    \end{eqnarray*}
where $\delta(M)$ is defined as in (\ref{aeq5.5.3}), and the corresponding integrated counting function is defined by
    \begin{eqnarray*}
        \tilde{N}_{AW, f}^{[M](s^{'})}\left(r, Q\right)&=&\int_{0}^{r}\frac{\tilde{n}_{AW, f}^{[M](s^{'})}(t, Q)-\tilde{n}_{AW, f}^{[M](s^{'})}(0, Q)}{t}dt\\
            &&+\tilde{n}_{AW, f}^{[M](s^{'})}(0, Q)\log r.
    \end{eqnarray*}
It is clear that
\begin{eqnarray}\label{aeq112}
    \tilde{N}_{AW, f}^{[M](1)}(r, Q)=\tilde{N}_{AW}^{[M]}\left(r, \frac{1}{Q(\textbf{f})}\right)
\end{eqnarray}
and both definitions are independent of the choice of the reduced representation of $f$.

The following lemma, due to Quang \cite{Quang}, is frequently used to address the degeneracy SMT.

\smallskip

\begin{lemma}\label{le6.3} (See \cite[Lemma 3.1]{Quang})
    Let $l\in\mathbb{N}$ and $Q_{1}, \ldots, Q_{l+1} \in \mathbb{C}[x_{0}, \ldots, x_{n}]$ be homogeneous polynomials of the same degree $d \geq 1$ such that the hypersurfaces $\{Q_{1} = 0\}, \ldots, \{Q_{l+1} = 0\}$ in $\mathbb{P}^{n}(\mathbb{C})$ are located in $l$-subgeneral position. Then, there exist $n$ homogeneous polynomials $P_{2}, \ldots, P_{n+1}$ of the form
    \begin{eqnarray*}
        P_{t} = \sum_{j=2}^{l-n+t} b_{tj} Q_{j}, \quad b_{tj} \in \mathbb{C}, \quad t = 2, \ldots, n+1,
    \end{eqnarray*}
    such that the hypersurfaces $\{P_{1} = 0\}, \{P_{2} = 0\}, \ldots, \{P_{n+1} = 0\}$ in $\mathbb{P}^{n}(\mathbb{C})$ are in general position, where $P_{1} = Q_{1}$.
\end{lemma}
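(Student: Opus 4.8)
The plan is to build $P_2,\ldots,P_{n+1}$ one at a time by a generic‑combination procedure, maintaining throughout the single invariant
\[
\dim\bigl(\{P_1=0\}\cap\cdots\cap\{P_m=0\}\bigr)\le n-m ,
\]
writing $V_m$ for this intersection. Since $\{P_1=0\}=\{Q_1=0\}$ is a hypersurface of degree $d\ge 1$, the invariant holds for $m=1$. Once it is established for $m=n+1$ we are done: $\dim V_{n+1}\le -1$ forces $V_{n+1}=\varnothing$, and by the definition of $l$‑subgeneral position applied with $l=k=n$ to exactly $n+1$ hypersurfaces, $\bigcap_{j=1}^{n+1}\{P_j=0\}=\varnothing$ is precisely the assertion that $\{P_1=0\},\ldots,\{P_{n+1}=0\}$ are in general position.

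For the inductive step from $m$ to $m+1$ (with $1\le m\le n$), let $Z$ be any irreducible component of $V_m$. Because $V_m$ is cut out by the $m$ nonzero forms $P_1,\ldots,P_m$, the projective dimension theorem gives $\dim Z\ge n-m$. The key point, and the only place the hypothesis enters, is that $Z$ cannot be contained in $\{Q_j=0\}$ for every $j$ in the prescribed range $\{2,\ldots,l-n+m+1\}$: indeed $Z\subseteq V_m\subseteq\{P_1=0\}=\{Q_1=0\}$ automatically, so if $Z$ also lay in each of $\{Q_2=0\},\ldots,\{Q_{l-n+m+1}=0\}$, then intersecting $Z$ with the remaining $n-m$ hypersurfaces $\{Q_{l-n+m+2}=0\},\ldots,\{Q_{l+1}=0\}$ would leave a set of dimension $\ge\dim Z-(n-m)\ge 0$, hence nonempty, sitting inside $\bigcap_{j=1}^{l+1}\{Q_j=0\}$ — contradicting $\bigcap_{j=1}^{l+1}\{Q_j=0\}=\varnothing$. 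Thus for each of the finitely many components $Z$ of $V_m$ there is an index $j(Z)\in\{2,\ldots,l-n+m+1\}$ with $Q_{j(Z)}|_Z\not\equiv 0$.

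I would then pick the coefficients $b_{m+1,2},\ldots,b_{m+1,l-n+m+1}$ generically. For each component $Z$ of $V_m$, the tuples $(b_{m+1,j})$ for which $P_{m+1}:=\sum_{j=2}^{l-n+m+1}b_{m+1,j}Q_j$ vanishes identically on $Z$ form a proper linear subspace of $\mathbb{C}^{\,l-n+m}$ — proper by the previous paragraph, and the ambient space is nontrivial since $l\ge n$ gives $l-n+m\ge 1$ — so a tuple lying outside this finite union of proper subspaces exists over $\mathbb{C}$. For such a choice $P_{m+1}$ is a nonzero form of degree $d$ vanishing identically on no component of $V_m$, so $\dim V_{m+1}=\dim(V_m\cap\{P_{m+1}=0\})\le\dim V_m-1\le n-(m+1)$, which closes the induction.

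The hard part is really just the dimension bookkeeping in the inductive step: checking that the prescribed index range $\{2,\ldots,l-n+m+1\}$ is exactly wide enough that a codimension-$m$ component of $V_m$ cannot be absorbed by all of those $Q_j$ simultaneously. This is where $l$‑subgeneral position is consumed, and it works precisely because the number $n-m$ of leftover hypersurfaces $Q_{l-n+m+2},\ldots,Q_{l+1}$ never exceeds $\dim Z$. Everything else — producing the generic coefficient vector and translating $V_{n+1}=\varnothing$ into the stated ``general position'' — is routine.
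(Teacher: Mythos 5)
Your proof is correct. The paper gives no proof of this lemma---it is quoted directly from Quang's cited work---and your inductive construction (maintaining $\dim(\{P_{1}=0\}\cap\cdots\cap\{P_{m}=0\})\le n-m$, using $l$-subgeneral position to produce, for each irreducible component, some $Q_{j}$ with $2\le j\le l-n+m+1$ not vanishing identically on it, and then choosing the coefficient vector outside finitely many proper linear subspaces of $\mathbb{C}^{l-n+m}$) is essentially the standard argument of the cited source, so nothing further is needed.
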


\smallskip

Now, we are ready to state the main theorem of this paper.

\smallskip

\begin{theorem}[Askey-Wilson version of SMT with hypersurfaces]\label{the6.7}
    Let $f:\mathbb{C}\rightarrow \mathbb{P}^{n}(\mathbb{C})$ be a holomorphic curve with a reduced representation $\textbf{f}=(f_{0}, \cdots, f_{n})$ satisfying that $f_{0}, \cdots, f_{n}$ are algebraically independent, and
    \begin{eqnarray}\label{eq6.5.1}
        \limsup\limits_{r\rightarrow \infty}\frac{\log^{+}\log^{+} T_{f}(r)}{\log\log r}<1.
    \end{eqnarray}
    Let $Q_{j}$ be a homogeneous polynomial in $\mathbb{C}[x_{0}, \cdots, x_{n}]$ of degree $d_{j}(\geq 1)$ defining the hypersurface $D_{j}$ in $\mathbb{P}^{n}(\mathbb{C})$, $j=1, \cdots, p$, such that $D_{1}, \cdots, D_{p}$ are located in $l$-subgeneral position. Suppose $s_{j}\in\mathbb{N}$ is the number of distinct irreducible factors of $Q_{j}$, fix a positive integer $s^{'}\leq \min\left\{n, \min\limits_{1\leq j\leq p}\{s_{j}\}\right\}$, and let $d=\textrm{lcm}\{d_{1}, \cdots, d_{p}\}$.
    Define
    \begin{eqnarray*}
        \alpha=\max\limits_{1\leq j\leq p}\left\{\frac{\max\left\{d_{j}-s_{j}+1, \left\lceil\frac{d_{j}}{s^{'}}\right\rceil\right\}}{d_{j}}\right\}(l-n)
    \end{eqnarray*}
    and
    \begin{eqnarray*}
        \beta=\frac{\alpha d}{\max\limits_{1\leq j\leq p}\left\{\max\left\{d-s_{j}+1, \left\lceil\frac{d}{s^{'}}\right\rceil\right\}\right\}(l-n)}.
    \end{eqnarray*}
    If $\beta(\alpha+1)\geq \alpha$, then, for any $\varepsilon>0$,
    \begin{eqnarray}\label{eq6.5.2}
        (p-(\alpha+1)(n+1)-\varepsilon)T_{f}(r)\leq \frac{1}{d}\sum_{j=1}^{p} \tilde{N}_{AW, f}^{[M_{1}](s^{'})}\left(r, Q_{j}^{\frac{d}{d_{j}}}\right)+S_{\log}(r, f),
    \end{eqnarray}
    where $r$ tends to infinity outside an exceptional set $F$ satisfying (\ref{eq3.3}) and
    \begin{eqnarray*}
        M_{1}=\lfloor 4(e\lceil\alpha+1\rceil(n+1)^{2}d!\lceil \varepsilon^{-1}\rceil)^{n}-1\rfloor.
    \end{eqnarray*}
\end{theorem}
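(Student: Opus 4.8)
The plan is to lift $f$ by a Veronese embedding, reduce the $l$-subgeneral position hypothesis to general position pointwise via Quang's Lemma~\ref{le6.3}, run a Ru-type filtration of the space of forms of large degree, apply the Askey--Wilson general form of the Second Main Theorem for hyperplanes (Theorem~\ref{the5.1}) to the lifted curve, and convert the Askey--Wilson Wronskian term it produces into the truncated counting functions $\tilde N_{AW,f}^{[M_1](s')}$ by means of Lemmas~\ref{le5.3}, \ref{le6.4} and~\ref{le6.4.1}. First I would normalise the degrees: put $\tilde Q_j:=Q_j^{d/d_j}$, a homogeneous polynomial of degree $d$ with the same $s_j$ distinct irreducible factors and zero set $D_j$, so $\{D_j\}_{j=1}^{p}$ stays in $l$-subgeneral position; fix a min-max $s'$-decomposition $\tilde Q_j=\tilde Q_{j,1}\cdots\tilde Q_{j,s'}$ (legitimate since $s'\le s_j$), so by Lemma~\ref{le6.2} each factor has degree at most $\tilde d_j^{*}:=\max\{d-s_j+1,\lceil d/s'\rceil\}$. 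Since $f_0,\dots,f_n$ are algebraically independent, no nonzero homogeneous polynomial vanishes on $(f_0,\dots,f_n)$, so for any positive $N$ divisible by $d$ the degree-$N$ monomials in $f_0,\dots,f_n$ are linearly independent, and $F:=\phi_N\circ f$ (the composition of $f$ with the degree-$N$ Veronese map into $\mathbb P^{\mathcal N}(\mathbb C)$, $\mathcal N+1=\binom{n+N}{n}$) is a holomorphic curve with linearly independent coordinate functions, $T_F(r)=N\,T_f(r)+O(1)$ and $S_{\log}(r,F)=S_{\log}(r,f)$.

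Second, for $|z|$ large relabel so that $|\tilde Q_1(\mathbf{f})(z)|\le\cdots\le|\tilde Q_p(\mathbf{f})(z)|$. Since any $l+1$ of the $D_j$ have empty intersection, a compactness argument on $\mathbb P^n(\mathbb C)$ shows at most $l$ of the $|\tilde Q_j(\mathbf{f})(z)|$ lie below $c\|\mathbf{f}(z)\|^{d}$ for a fixed $c>0$, so $\sum_{j=1}^{p}\lambda_{\tilde Q_j}(\mathbf{f}(z))\le\sum_{k=1}^{l}\lambda_{\tilde Q_k}(\mathbf{f}(z))+O(1)$; applying Lemma~\ref{le6.3} to $\tilde Q_1,\dots,\tilde Q_{l+1}$ yields degree-$d$ polynomials $P_1=\tilde Q_1,P_2,\dots,P_{n+1}$ in general position with $P_t=\sum_{j=2}^{l-n+t}b_{tj}\tilde Q_j$, hence $|P_t(\mathbf{f})(z)|\le C|\tilde Q_{l-n+t}(\mathbf{f})(z)|$, and bounding the remaining small terms by $\lambda_{P_1}(\mathbf{f}(z))$ gives
\[
\sum_{j=1}^{p}\lambda_{\tilde Q_j}(\mathbf{f}(z))\le (l-n+1)\lambda_{P_1}(\mathbf{f}(z))+\sum_{t=2}^{n+1}\lambda_{P_t}(\mathbf{f}(z))+O(1).
\]
Only finitely many relabellings occur, so one argues on each measurable piece of $|z|=r$ and sums. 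For $P_1,\dots,P_{n+1}$ in general position (with $P_1$ the most singular) I would build a filtration of the space of degree-$N$ forms adapted to the divisibility along $(P_1^{\,l-n+1},P_2,\dots,P_{n+1})$, producing a basis $\{\psi_k\}_{k=1}^{\mathcal N+1}$ depending only on the relabelling such that, at every $z$,
\[
\sum_{k}\lambda_{\psi_k}(\mathbf{f}(z))\ \ge\ \Big(\tfrac{\mathcal N+1}{n+1}-\varepsilon_N\Big)\tfrac{N}{d}\Big((l-n+1)\lambda_{P_1}(\mathbf{f}(z))+\textstyle\sum_{t=2}^{n+1}\lambda_{P_t}(\mathbf{f}(z))\Big)-O(1),
\]
with $\varepsilon_N\to0$ as $N\to\infty$, and such that each $\psi_k$ vanishes along every $\tilde Q_j$ met by $f$ — through its $s'$-decomposition, along $\tilde Q_{j,1},\dots,\tilde Q_{j,s'}$ — to an order governed by $\tilde d_j^{*}/d$.

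Third, apply Theorem~\ref{the5.1} to $F$ (legitimate since its coordinates are linearly independent and $\mathcal W(F)=\mathcal W(F_0,\dots,F_{\mathcal N})\not\equiv0$ by Lemma~\ref{le4.4.1}):
\[
\int_0^{2\pi}\max_{\mathcal K}\sum_{k\in\mathcal K}\log\frac{\|\mathbf{F}(re^{i\theta})\|}{|\psi_k(\mathbf{F})(re^{i\theta})|}\frac{d\theta}{2\pi}+N_{\mathcal W(F)}(r,0)\le(\mathcal N+1)\,T_F(r)+S_{\log}(r,f).
\]
Combining with the two displays of the second paragraph, the first integral is at least $\big(\tfrac{\mathcal N+1}{n+1}-\varepsilon_N\big)\tfrac{N}{d}\sum_j m_f(r,\tilde Q_j)$ up to $O(1)$; and, exactly as in the proof of Theorem~\ref{the5.4}, the zero order of $\mathcal W(F)$ at each large $z$ dominates the basis-sum of the level-$\mathcal N$ truncated vanishing orders of $\psi_k(\mathbf{F})=\psi_k(\mathbf{f})$, which by the filtration's divisibility bounds together with Lemmas~\ref{le5.3} and~\ref{le6.4} is at least $\big(\tfrac{\mathcal N+1}{n+1}-\varepsilon_N\big)\tfrac{N}{d}\sum_j\sum_{i=1}^{s'}\min_{0\le t\le M_1}\nu^0_{\mathcal A_{q^{M_1-t}}\mathcal D_q^{t}\tilde Q_{j,i}(\mathbf{f})}(z)$ once $N$ is large enough that $\mathcal N$ exceeds $M_1$; passing to integrated counting functions and inserting the shift $\eta_q^{\delta(M_1)}$ via Lemma~\ref{le3.3} turns this into $\big(\tfrac{\mathcal N+1}{n+1}-\varepsilon_N\big)\tfrac{N}{d}\sum_j\big(N_f(r,\tilde Q_j)-\tilde N_{AW,f}^{[M_1](s')}(r,\tilde Q_j)\big)+S_{\log}(r,f)$. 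Dividing the displayed inequality by $\big(\tfrac{\mathcal N+1}{n+1}-\varepsilon_N\big)\tfrac{N}{d}$, using $(\mathcal N+1)T_F(r)=N(\mathcal N+1)T_f(r)+O(1)$ and the First Main Theorem $\tfrac1d\sum_j(m_f(r,\tilde Q_j)+N_f(r,\tilde Q_j))=p\,T_f(r)+O(1)$, and charging the $(l-n+1)$-fold overcounting of the closest hypersurface against its zeros with degree weight $\tilde d_j^{*}$ rather than $d$ (this is where Lemmas~\ref{le6.2} and~\ref{le6.4} enter, and where $\alpha$ — which records $(l-n)$ times the worst ratio $\max\{d_j-s_j+1,\lceil d_j/s'\rceil\}/d_j$ in the native degrees — and $\beta$ — the factor reconciling this with the same ratio formed with the common degree $d$ — appear, the hypothesis $\beta(\alpha+1)\ge\alpha$ being exactly the compatibility that closes the estimate), one obtains $(p-(\alpha+1)(n+1)-\varepsilon'_N)T_f(r)\le\frac1d\sum_j\tilde N_{AW,f}^{[M_1](s')}(r,Q_j^{d/d_j})+S_{\log}(r,f)$ with $\varepsilon'_N\to0$. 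Choosing $N=dm$ minimal so that $\varepsilon'_N<\varepsilon$ and the truncation step is valid, and estimating $\mathcal N+1=\binom{n+N}{n}$ by the usual binomial bounds, produces the stated $M_1=\lfloor 4(e\lceil\alpha+1\rceil(n+1)^2 d!\lceil\varepsilon^{-1}\rceil)^n-1\rfloor$; letting $r\to\infty$ outside the exceptional set $F$ with $\int_F dt/(t\log t)<\infty$ finishes the argument.

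The hard part will be the combined filtration-and-Wronskian step: constructing the Ru-type flag of degree-$N$ forms so that it is simultaneously usable on the proximity side (via Theorem~\ref{the5.1}) with the $(l-n+1)$-fold weight on the closest hypersurface and on the Askey--Wilson Wronskian side, and then proving that the order of $\mathcal W(F)$ at a zero of $\tilde Q_j(\mathbf{f})$ is bounded below by the decomposition quantity $\sum_{i=1}^{s'}\min_{0\le t\le M_1}\nu^0_{\mathcal A_{q^{M_1-t}}\mathcal D_q^{t}\tilde Q_{j,i}(\mathbf{f})}$ rather than the cruder $\min_{0\le t\le M_1}\nu^0_{\mathcal A_{q^{M_1-t}}\mathcal D_q^{t}\tilde Q_j(\mathbf{f})}$ (precisely the content supplied by Lemmas~\ref{le6.2} and~\ref{le6.4}), all while keeping the truncation level $M_1$ explicit, uniform in $z$, and compatible with the Askey--Wilson analyticity threshold $\mathcal R(\mathcal N,q)$.
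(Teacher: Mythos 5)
Your outer architecture (normalising to $\tilde Q_j=Q_j^{d/d_j}$, pointwise renumbering, Quang's Lemma~\ref{le6.3}, a Ru--Corvaja--Zannier filtration of $V_N$, applying Theorem~\ref{the5.1} to the lifted curve $F$, and converting the Wronskian term via Lemmas~\ref{le5.3} and~\ref{le6.4}) is exactly the paper's, but the step you yourself flag as ``the hard part'' is precisely where the theorem lives, and your sketch of it does not work as stated. You propose a \emph{single} filtration adapted to the divisibility along $(P_1^{\,l-n+1},P_2,\dots,P_{n+1})$ which is supposed simultaneously to (a) reproduce the proximity bound with the $(l-n+1)$-fold weight on the closest hypersurface and (b) force every basis element to vanish along the $s'$-decomposition components of \emph{every} $\tilde Q_j$ met by $f$ to order governed by $\tilde d_j^*/d$. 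No such filtration is constructed, and one cannot be: a filtration is built from an $n$-element family cutting out a zero-dimensional subvariety, so it can only see one such family at a time, and the ordering relevant on the proximity side (by the magnitudes $|\tilde Q_{j(x)}(\mathbf f)(x)|$) is not the ordering relevant on the Wronskian side (by vanishing orders, via Lemma~\ref{le5.3}); your proposal conflates the two. In particular the constants $\alpha$, $\beta$ and the hypothesis $\beta(\alpha+1)\ge\alpha$ never actually enter your argument except as an assertion that they are ``exactly the compatibility that closes the estimate''.

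What the paper actually does at this point is structurally different. On the proximity side the $(l-n)$ extra copies of the minimal $\tilde Q_{1(x)}$ are replaced by its min-max $s'$-decomposition raised to a \emph{common degree} $\hat d=\mathrm{lcm}$ of all decomposition degrees and $d$ (this $\hat d\le d!$ is why $d!$ appears in $M_1$; your sketch, which works throughout in degree $d$, cannot produce it), Lemma~\ref{le6.2} giving the weight $\alpha$, and the $s'$ components are completed to a zero-dimensional family of degree-$\hat d$ forms; this yields \emph{two} families and the combined coefficient $(\alpha+1)$ in front of the hyperplane SMT for $F$. On the counting side, $\nu^0_{\mathcal W}$ is bounded below \emph{twice}: once through the filtration attached to the Quang general-position family (covering $\tilde Q_1$ and $\tilde Q_{l-n+2},\dots,\tilde Q_l$, then refined into decomposition components by Lemma~\ref{le6.4}), and once through the filtration attached to the decomposition family of the worst $\tilde Q_{j^*}$ among $\tilde Q_2,\dots,\tilde Q_{l-n+1}$; the two bounds are combined with weights $(\alpha+1)$ and $\beta(\alpha+1)/\alpha$, and the hypothesis $\beta(\alpha+1)\ge\alpha$ is used exactly here to make the second weight at least $1$ so that all $l$ relevant hypersurfaces are charged. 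Finally, the truncation level is not a free parameter chosen after the fact: it is $M_1=M-1$ with $M=\dim V_N$, and only the explicit choice $N=\lceil\alpha+1\rceil(n+1)^3\hat d\lceil\varepsilon^{-1}\rceil+(n+1)\hat d$ turns it into the stated bound. Without supplying this double-filtration mechanism (or an equivalent), your proposal does not prove the theorem.
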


\smallskip

    \begin{proof}
        Let $\tilde{Q}_{j}=Q_{j}^{d/d_{j}}$, $j=1, \cdots, p$. Then $\tilde{Q}_{1}, \cdots, \tilde{Q}_{p}$ have the same degree of $d$. For any given $x\in \mathbb{C}\backslash \bigcup_{j=1}^{p}f^{-1}(D_{j})$, there exists a renumbering $\{1(x), \cdots, p(x)\}$ of $\{1, \cdots, p\}$ such that
        \begin{eqnarray*}
            |\tilde{Q}_{1(x)}(\textbf{f})(x)|\leq\cdots\leq|\tilde{Q}_{p(x)}(\textbf{f})(x)|.
        \end{eqnarray*}
        Since $D_{1}, \cdots, D_{p}$ are in $l$-subgeneral position, we have
        \begin{eqnarray*}
            \lVert \textbf{f}(x)\rVert^{d}\leq A_{1}\cdot \max\limits_{1\leq j\leq l+1}|\tilde{Q}_{j(x)}(\textbf{f})(x)|
        \end{eqnarray*}
        for a finite positive constant $A_{1}$ only depending on the coefficients of $\tilde{Q}_{1}, \dots, \tilde{Q}_{p}$. We denote by $P_{1(x)}=\tilde{Q}_{1(x)}$, $P_{2(x)}$, $\cdots$, $P_{n(x)}$ the homogeneous polynomials obtained in Lemma \ref{le6.3} with respect to the homogeneous polynomials $\tilde{Q}_{1(x)}$, $\cdots$, $\tilde{Q}_{l(x)}$ ($P_{(n+1)(x)}$ and $\tilde{Q}_{(l+1)(x)}$ are ignored here). Then, since there are only finitely many renumberings of $\{1, \cdots, p\}$, we can conclude that there exists a positive constant $A_{2}$, chosen uniformly for all given $x$ such that
               \begin{eqnarray*}
            |P_{t(x)}(\textbf{f})(x)|\leq A_{2}\cdot \max\limits_{2\leq j\leq l-n+t}|\tilde{Q}_{j(x)}(\textbf{f})(x)|=A_{2}\cdot |\tilde{Q}_{(l-n+t)(x)}(\textbf{f})(x)|,
        \end{eqnarray*}
        for $t=2, \cdots, n$.
        Hence
        \begin{eqnarray}\label{aeq6.7.5}
            &&\log\prod_{j=1}^{p}\frac{\left\lVert \textbf{f}(x)\right\rVert^{d}}{|\tilde{Q}_{j(x)}(\textbf{f})(x)|}\leq\log\prod_{j=1}^{l}\frac{\left\lVert \textbf{f}(x)\right\rVert^{d}}{|\tilde{Q}_{j(x)}(\textbf{f})(x)|}+O(1)\nonumber\\
            &=& \log\prod_{j=l-n+2}^{l}\frac{\left\lVert \textbf{f}(x)\right\rVert^{d}}{|\tilde{Q}_{j(x)}(\textbf{f})(x)|}+\log\prod_{j=1}^{l-n+1}\frac{\left\lVert \textbf{f}(x)\right\rVert^{d}}{|\tilde{Q}_{j(x)}(\textbf{f})(x)|}+O(1)\nonumber\\
            &\leq&\log\prod_{j=2}^{n}\frac{\left\lVert \textbf{f}(x)\right\rVert^{d}}{|P_{j(x)}(\textbf{f})(x)|}+\log\prod_{j=1}^{l-n+1}\frac{\left\lVert \textbf{f}(x)\right\rVert^{d}}{|\tilde{Q}_{j(x)}(\textbf{f})(x)|}+O(1)\nonumber\\
            &\leq&\log\prod_{j=1}^{n}\frac{\left\lVert \textbf{f}(x)\right\rVert^{d}}{|P_{j(x)}(\textbf{f})(x)|}+(l-n)\log\frac{\left\lVert \textbf{f}(x)\right\rVert^{d}}{|\tilde{Q}_{1(x)}(\textbf{f})(x)|}+O(1).
        \end{eqnarray}
        Define $Q_{j}=P_{j, 1}^{*}\cdots P_{j, s^{'}}^{*}$ to be the corresponding min-max $s^{'}$-th decomposition for $j=1, \cdots, p$. Let $d_{j, i}^{*}=\deg  P_{j, i}^{*}$, $j=1, \cdots, p$ and $i=1, \cdots, s^{'}$, then $d_{Q_{j}}^{*}(s^{'})=\max\limits_{1\leq i\leq s^{'}}\{d_{j, i}^{*}\}$, and by applying Lemma \ref{le6.2}, we obtain
        \begin{eqnarray}\label{aeq6.7.5.1}
            \frac{d_{Q_{j}}^{*}(s^{'})}{d_{j}}(l-n)\leq \alpha
        \end{eqnarray}
        for $j=1, \cdots, p$. Let
        \begin{eqnarray*}
            \hat{d}&:=&\text{lcm}\{d_{1,1}^{*}, \cdots, d_{1, s^{'}}^{*}, \cdots, d_{p, 1}^{*}, \cdots, d_{p, s^{'}}^{*}, d\}\leq d!\\
            \hat{P}_{j, i}&:=&(P_{j, i}^{*})^{\frac{\hat{d}}{d_{k, i}^{*}}}, \quad j=1, \cdots, p \text{ and } i=1, \cdots, s^{'},\\
        \end{eqnarray*}
        and
        \begin{eqnarray*}
            \hat{P}_{t(x)}&:=&P_{t(x)}^{\frac{\hat{d}}{d}}, \quad t=1, \cdots, n.
        \end{eqnarray*}
        Hence $\deg \hat{P}_{j, i}=\deg \hat{P}_{t(x)}=\hat{d}$. Since $\hat{P}_{j, 1}, \cdots, \hat{P}_{j, s^{'}}$ are pairwise relatively prime, thus we can find another $n-s^{'}$ homogeneous polynomials $\hat{P}_{j, s^{'}+1}, \cdots, \hat{P}_{j, n}$ of degree $\hat{d}$ such that $\hat{P}_{j, 1}, \cdots, \hat{P}_{j, s^{'}}, \hat{P}_{j, s^{'}+1}, \cdots, \hat{P}_{j, n}$  define a subvariety of $\mathbb{P}^{n}(\mathbb{C})$ of dimension $0$. Moreover, there exists a positive constant $A_{3}$ only depending on the coefficients of $Q_{j}$ such that $|\hat{P}_{j, i}(\textbf{f})(x)|\leq A_{3}\lVert \textbf{f}(x)\rVert^{\hat{d}}$ for all $j=1, \cdots, p$ and $i=1, \cdots, n$. Then
        \begin{eqnarray}\label{aeq6.7.6}
            &&(l-n)\log\frac{\left\lVert \textbf{f}(x)\right\rVert^{d}}{|\tilde{Q}_{1(x)}(\textbf{f})(x)|}\nonumber\\
            &=&(l-n)\frac{d}{d_{1(x)}}\log\frac{\left\lVert \textbf{f}(x)\right\rVert^{d_{1(x)}}}{|Q_{1(x)}(\textbf{f})(x)|}\nonumber\\
            &=&(l-n)\frac{d}{d_{1(x)}}\sum_{i=1}^{s^{'}}\log\frac{\left\lVert \textbf{f}(x)\right\rVert^{d_{1(x), i}^{*}}}{|P_{1(x), i}^{*}(\textbf{f})(x)|}\nonumber\\
            &=&(l-n)\frac{d}{d_{1(x)}}\sum_{i=1}^{s^{'}}\frac{d_{1(x), i}^{*}}{\hat{d}}\log \frac{\lVert \textbf{f}(x) \rVert^{\hat{d}}}{|\hat{P}_{1(x), i}(\textbf{f})(x)|}\nonumber\\
            &\leq&(l-n)\frac{d\cdot d_{Q_{1(x)}}^{*}(s^{'})}{d_{1(x)}\cdot \hat{d}}\sum_{i=1}^{s^{'}}\log \frac{\lVert \textbf{f}(x) \rVert^{\hat{d}}}{|\hat{P}_{1(x), i}(\textbf{f})(x)|}+O(1)\nonumber\\
            &\leq&\frac{\alpha d}{\hat{d}}\sum_{i=1}^{n}\log \frac{\lVert \textbf{f}(x) \rVert^{\hat{d}}}{|\hat{P}_{1(x), i}(\textbf{f})(x)|}+O(1),
        \end{eqnarray}
        where the last inequality can be deduced from (\ref{aeq6.7.5.1}). Then it follows by combining (\ref{aeq6.7.5}) and (\ref{aeq6.7.6}) that
        \begin{eqnarray}\label{aeq6.7.7}
            &&\log\prod_{j=1}^{p}\frac{\left\lVert \textbf{f}(x)\right\rVert^{d}}{|\tilde{Q}_{j(x)}(\textbf{f})(x)|}\nonumber\\
            &\leq&\frac{d}{\hat{d}}\sum_{j=1}^{n}\log\frac{\lVert \textbf{f}(x)\rVert^{\hat{d}}}{|\hat{P}_{j(x)}(\textbf{f})(x)|}+\frac{\alpha d}{\hat{d}}\sum_{i=1}^{n}\log \frac{\lVert \textbf{f}(x) \rVert^{\hat{d}}}{|\hat{P}_{1(x), i}(\textbf{f})(x)|}+O(1).
        \end{eqnarray}
        Thus we have two families of homogeneous polynomials $\{\hat{P}_{j(x)}\}_{j=1}^{n}$ and $\{\hat{P}_{1(x), i}\}_{i=1}^{n}$ of degree $\hat{d}$ defining two subvarieties of $\mathbb{P}^{n}$ of dimension $0$ for any fixed $x$.
        
        We define $\mathcal{H}$ as the set of homogeneous polynomial families of common degree $\hat{d}$ constructed by $Q_{1}, \cdots, Q_{p}$ with some (finitely many) specific operations, and, for any element $H=\{H_{1}, \cdots, H_{n}\}\in \mathcal{H}$, $H_{1}, \cdots, H_{n}$ define a subvariety of $\mathbb{P}^{n}(\mathbb{C})$ of dimension $0$. Since there are finitely many hypersurfaces $D_{1}, \cdots, D_{p}$, then $\mathcal{H}$ has only finitely many elements.
   
Next, we discuss the classical filtration method. To maintain the completeness of the proof, we repeat it here. Denote by $V_{N}$ the space of homogeneous polynomials in $\mathbb{C}[x_{0}, \cdots, x_{n}]$ of degree $N$, where $N$ is a fixed large integer which is divisible by $\hat{d}$ to be chosen later. Set 
    \begin{eqnarray}\label{aeq6.7.11}
        M=M_{N}:=\dim V_{N}=\left(\begin{matrix}N+n \\ 
   n
    \end{matrix}\right).
    \end{eqnarray}

For any fixed $\{H_{1}, \cdots, H_{n}\} \in \mathcal{H}$, and according to the lexicographic order, we define a filtration of $V_{N}$ as follows:
        \begin{eqnarray*}
            W(\textbf{i})=\sum_{(\textbf{e})>(\textbf{i})}H_{1}^{e_{1}}\cdots H_{n}^{e_{n}}V_{N-\hat{d}\sigma(\textbf{e})}
        \end{eqnarray*}
        for $n$-tuples $(\textbf{t})=(t_{1}, \cdots, t_{n})\in \mathbb{N}^{n}$ with $\sigma(\textbf{t})=\sum_{s=1}^{n}t_{s}\leq N\slash \hat{d}$ where $(\textbf{t})=(\textbf{i})$ and $(\textbf{e})$. Now we consider the quotient space $W(\textbf{i})\slash W(\textbf{j})$ for some consecutive tuples $(\textbf{i})>(\textbf{j})$ and $\hat{d}\sigma(\textbf{j})<\hat{d}\sigma(\textbf{i})\leq N$, and denote $\Delta(\textbf{i})=\text{dim}W(\textbf{i})\slash W(\textbf{j})$. We choose a suitable basis $\psi_{1}, \cdots, \psi_{M}$ for $V_{N}$ such that $\psi\in\{\psi_{1}, \cdots, \psi_{M}\}$ belongs to the basis of the quotinent space $W(\textbf{i})\slash W(\textbf{j})$ for some consecutive tuples $(\textbf{i})>(\textbf{j})$ and $\hat{d}\sigma(\textbf{j})<\hat{d}\sigma(\textbf{i})\leq N$. 
        
        Since $\sum_{\sigma(\textbf{i})\leq T}=\sum_{\sigma(\textbf{i}^{'})= T}=C_{T+m}^{m}$ for a non-negative integer $T$ where $(\textbf{i})$ and $(\textbf{i}^{'})$ are non-negative integer $m$-tuples and $(m+1)$-tuples respectively, and the sum below is independent of $j$, then by following a lemma due to Corvaja and Zannier in \cite{Cor} (see also \cite[Lemma 3.3]{Ru3} and \cite[Lemma 3.19]{Quang}), we have
        \begin{eqnarray}\label{aeq6.7.12}
            \Omega&=&\Omega_{j}:=\sum_{\sigma(\textbf{i})\leq N/\hat{d}}i_{j}\Delta(\textbf{i})\geq\sum_{\sigma(\textbf{i})\leq N/\hat{d}-n}i_{j}\Delta(\textbf{i})\nonumber\\
            &=&\frac{\hat{d}^{n}}{n+1}\sum_{\sigma(\textbf{i}^{'})=N/\hat{d}-n}\sum_{j=1}^{n+1}i_{j}^{'}=\frac{N(N-\hat{d})\cdots(N-n\hat{d})}{\hat{d}(n+1)!}
        \end{eqnarray}
        and
        \begin{eqnarray}\label{aeq6.7.13}
            \sum_{\sigma(\textbf{i})\leq N/\hat{d}}\sigma(\textbf{i})\Delta(\textbf{i})=n\Omega.
        \end{eqnarray} 
         Now we write $\psi=H_{1}^{i_{1}}\cdots H_{n}^{i_{n}}\kappa$, where $\kappa\in V_{N-\hat{d}\sigma(\textbf{i})}$. Then
        \begin{eqnarray*}
            |\psi(\textbf{f})(x)|\leq A_{4}\cdot|H_{1}(\textbf{f})(x)|^{i_{1}}\cdots |H_{n}(\textbf{f})(x)|^{i_{n}} \lVert \textbf{f}(x)\rVert^{N-\hat{d}\sigma(\textbf{i})},
        \end{eqnarray*}
        where $A_{4}$ is a positive constant depending only on $\psi$. Observe that there are precisely $\Delta(\textbf{i})$ such functions $\psi$ in our basis. Hence, taking the product over all functions in the basis, we obtain, after taking logarithms 
        \begin{eqnarray}\label{aeq6.7.13.1}
            \sum_{t=1}^{M}\log|\psi_{t}(\textbf{f})(x)|&\leq& \sum_{\sigma(\textbf{i})\leq N/\hat{d}}\Delta(\textbf{i})(i_{1}\log|H_{1}(\textbf{f})(x)|+\cdots+i_{n}\log|H_{n}(\textbf{f})(x)|)\nonumber\\
            &&+\left(\sum_{\sigma(\textbf{i})\leq N/\hat{d}}\Delta(\textbf{i})(N-\hat{d}\sigma(\textbf{i}))\right)\log\lVert \textbf{f}(x)\rVert+O(1)\\
            &\leq&\Omega\sum_{j=1}^{n}\log|H_{j}(\textbf{f})(x)|+(NM-\hat{d}n\Omega)\log\lVert \textbf{f}(x)\rVert+O(1).\nonumber
        \end{eqnarray}
        Let $\phi_{1}, \cdots, \phi_{M}$ be a fixed basis of $V_{N}$ and $\textbf{F}=(\phi_{1}(\textbf{f}), \cdots, \phi_{M}(\textbf{f}))$ be a reduced representation of the holomorphic curve $F:\mathbb{C}\rightarrow \mathbb{P}^{M-1}(\mathbb{C})$. Then $\{\psi_{1}, \cdots, \psi_{M}\}$ can be written as linearly independent linear forms $L_{1}, \cdots, L_{M}$ in $\phi_{1}, \cdots, \phi_{M}$ such that $\psi_{t}(\textbf{f})=L_{t}(\textbf{F})$, $t=1, \cdots, M$. Then (\ref{aeq6.7.13.1}) yields
        \begin{eqnarray*}
            \sum_{j=1}^{n}\log\frac{\lVert \textbf{f}(x)\rVert^{|\hat{d}}}{|H_{j}(\textbf{f})(x)|}&\leq&\frac{1}{\Omega}\left(\sum_{t=1}^{M}\log\frac{\lVert \textbf{F}(x)\rVert}{|L_{t}(\textbf{F})(x)|}-M\log\lVert \textbf{F}(x)\rVert\right)\nonumber\\
            &&+\frac{NM}{\Omega}\log\lVert\textbf{f}(x)\rVert+O(1).
        \end{eqnarray*}
        Since $f$ is algebraically independent, then $F$ is linearly independent. From (\ref{aeq6.7.7}), Theorem \ref{the5.1} and the First Main Theorem, we have
        \begin{eqnarray}\label{aeq6.7.13.2}
            &&\int_{0}^{2\pi}\sum_{j=1}^{p}\log \frac{\lVert\textbf{f}(re^{i\theta})\rVert^{d}}{|\tilde{Q}_{j}(\textbf{f})(re^{i\theta})|}\frac{d\theta}{2\pi}\nonumber\\
            &\leq& \frac{(\alpha+1)d}{\hat{d}\Omega}\left(\int_{0}^{2\pi}\max\limits_{\mathcal{K}}\sum_{t\in\mathcal{K}}\log\frac{\lVert \textbf{F}(re^{i\theta}) \rVert}{|L_{t}(\textbf{F})(re^{i\theta})|}\frac{d\theta}{2\pi}-MT_{F}(r)\right)\nonumber\\
            &&+\frac{(\alpha+1)d}{\hat{d}\Omega}NMT_{f}(r)+O(1)\nonumber\\
            &\leq&\frac{(\alpha+1)d}{\hat{d}}\left(\frac{-N_{W}(r, 0)+S_{\log}(r, F)}{\Omega}+\frac{NM}{\Omega}T_{f}(r)\right)+O(1),
        \end{eqnarray}
        where the maximum is taken over all subsets $\mathcal{K}\subset\{1, \cdots, M\}$ such that the linear forms $L_{t}$, $t\in\mathcal{K}$, are linearly independent. Since $T_{F}(r)\leq NT_{f}(r)+O(1)$. Thus, (\ref{aeq6.7.12}) yields $\frac{S_{\log}(r, F)}{\Omega}\leq \frac{N}{\Omega}S_{\log}(r, f)+O(1)=S_{\log}(r, f)$. Then by adding $\sum_{j=1}^{p}N(r, 1/\tilde{Q}_{j}(\textbf{f}))$ to both sides of (\ref{aeq6.7.13.2}) and applying the First Main Theorem, we have
        \begin{eqnarray}\label{aeq6.7.14}
            &&\left(pd-\frac{(\alpha+1)d}{\hat{d}\Omega}NM\right)T_{f}(r)\nonumber\\
            &\leq& \sum_{j=1}^{p}N\left(r, \frac{1}{\tilde{Q}_{j}(\textbf{f})}\right)-\frac{(\alpha+1)d}{\hat{d}\Omega}N_{\mathcal{W}}(r, 0)+S_{\log}(r, f).
        \end{eqnarray}

        Now we first claim that
        \begin{eqnarray}\label{aeq6.7.15}
            &&\sum_{j=1}^{p}N\left(r, \frac{1}{\eta_{q}^{\delta(M_{1})}\tilde{Q}_{j}(\textbf{f})}\right)-\frac{(\alpha+1)d}{\hat{d}\Omega}N_{\mathcal{W}}(r, 0)\nonumber\\
            &\leq& \sum_{j=1}^{p}\tilde{N}_{AW,f}^{[M_{1}](s^{'})}(r, \tilde{Q}_{j})+O(\log r),
        \end{eqnarray}
        where $M_{1}=M-1$. For any fixed $x\in\mathbb{C}$ such that $|x|>\mathcal{R}(M_{1}, q)$, since $D_{1}, \cdots, D_{p}$ are located in $l$-subgeneral position, without loss of generality, we may suppose
        \begin{eqnarray}
            &&\nu^{0}_{\eta_{q}^{\delta(M_{1})}\tilde{Q}_{1}(\textbf{f})}(x)\geq \cdots \geq \nu^{0}_{\eta_{q}^{\delta(M_{1})}\tilde{Q}_{\tau_{1}}(\textbf{f})}(x)>0\nonumber\\
            &=&\nu^{0}_{\eta_{q}^{\delta(M_{1})}\tilde{Q}_{\tau_{1}+1}(\textbf{f})}(x)=\cdots=\nu^{0}_{\eta_{q}^{\delta(M_{1})}\tilde{Q}_{p}(\textbf{f})}(x)\label{aeq110}
        \end{eqnarray}
        where $0\leq \tau_{1}\leq l$. Since $\delta(M_{1})\in\{M_{1}-2t\}_{t=0}^{M_{1}}$, then from Lemma \ref{le5.3}, there exists a injective map $\mu:\{1, \cdots, l\}\rightarrow \{1, \cdots, l\}$ such that
        \begin{eqnarray}
            &&\min\limits_{0\leq t\leq M_{1}}\left\{\nu_{\mathcal{A}_{q^{M_{1}-t}}\mathcal{D}_{q}^{t}\tilde{Q}_{\mu(1)}(\textbf{f})}^{0}(x)\right\}\geq \cdots \geq \min\limits_{0\leq t\leq M_{1}}\left\{\nu_{\mathcal{A}_{q^{M_{1}-t}}\mathcal{D}_{q}^{t}\tilde{Q}_{\mu(\tau_{2})}(\textbf{f})}^{0}(x)\right\}>0\nonumber\\
            &=&\min\limits_{0\leq t\leq M_{1}}\left\{\nu_{\mathcal{A}_{q^{M_{1}-t}}\mathcal{D}_{q}^{t}\tilde{Q}_{\mu(\tau_{2}+1)}(\textbf{f})}^{0}(x)\right\}=\cdots=\min\limits_{0\leq t\leq M_{1}}\left\{\nu_{\mathcal{A}_{q^{M_{1}-t}}\mathcal{D}_{q}^{t}\tilde{Q}_{p}(\textbf{f})}^{0}(x)\right\}\label{aeq111}
        \end{eqnarray}
        where $0\leq \tau_{2}\leq \tau_{1}\leq l$. In particular $\{\mu(1), \cdots, \mu(\tau_{2})\}\subset\{1, \cdots, \tau_{1}\}$. Without loss of generality, we assume $\mu(i)=i$, $i=1, \cdots, l$. Define $\tilde{P}_{1}, \cdots, \tilde{P}_{n}$ be the homogeneous polynomials with degree $d$ constructed by Lemma \ref{le6.3} with respect to $\tilde{Q}_{1}$, $\cdots$, $\tilde{Q}_{l}$ ($\tilde{Q}_{l+1}$). Then
        \begin{eqnarray*}
            \min\limits_{0\leq t\leq M_{1}}\left\{\nu_{\mathcal{A}_{q^{M_{1}-t}}\mathcal{D}_{q}^{t}\tilde{P}_{k}(\textbf{f})}^{0}(x)\right\}&\geq& \min\limits_{0\leq t\leq M_{1}}\min\limits_{2\leq s\leq l-n+k}\left\{\nu_{\mathcal{A}_{q^{M_{1}-t}}\mathcal{D}_{q}^{t}\tilde{Q}_{s}(\textbf{f})}^{0}(x)\right\}\\
            &=&\min\limits_{2\leq s\leq l-n+k}\min\limits_{0\leq t\leq M_{1}}\left\{\nu_{\mathcal{A}_{q^{M_{1}-t}}\mathcal{D}_{q}^{t}\tilde{Q}_{s}(\textbf{f})}^{0}(x)\right\}\\
            &=&\min\limits_{0\leq t\leq M_{1}}\left\{\nu_{\mathcal{A}_{q^{M_{1}-t}}\mathcal{D}_{q}^{t}\tilde{Q}_{l-n+k}(\textbf{f})}^{0}(x)\right\},
        \end{eqnarray*}
        where $k=2, \cdots, n$.
        
        Let $\check{P}_{k} = \tilde{P}_{k}^{\hat{d}/d}$ for $k = 1, \cdots, n$. Then $\{\check{P}_{1}, \cdots, \check{P}_{n}\} \in \mathcal{H}$.
         With respect to $\check{P}_{1}, \cdots, \check{P}_{n}$, by constructing the filtration $W(\textbf{i})$ of $V_{N}$, we can obtain a basis $\{\varphi_{1}, \cdots, \varphi_{M}\}$ of $V_{N}$ and linearly independent linear forms $L_{1}^{'}, \cdots, L_{M}^{'}$, such that $\varphi_{j}(\textbf{f})=L_{j}^{'}(\textbf{F})$. By the property of the Askey-Wilson version of Wronskian-Casorati determinant, we have
        \begin{eqnarray*}
            \mathcal{W}=\mathcal{W}(F_{1}, \cdots, F_{M})=A_{5}\cdot\mathcal{W}(L_{1}^{'}(\textbf{F}), \cdots, L_{M}^{'}(\textbf{F}))=A_{5}\cdot\mathcal{W}(\varphi_{1}(\textbf{f}), \cdots, \varphi_{M}(\textbf{f}))
        \end{eqnarray*}
        where $A_{5}$ is a finite non-zero number, $F_{i}=\phi_{i}(\textbf{f})$ and $\{\phi_{i}\}_{i=1}^{M}$ is a basis of $V_{N}$ which is fixed above. Let $\varphi$ be an element of the basis belonging to $W(\textbf{i})/W(\textbf{j})$ (($\textbf{j}$)$>$($\textbf{i}$) are consecutive $n$-tuples), so we may write $\varphi=\check{P}_{1}^{i_{1}}\cdots \check{P}_{n}^{i_{n}}\kappa_{1}$, $\kappa_{1}\in V_{N-\hat{d}\sigma(\textbf{i})}$. Suppose $\tilde{Q}_{j}=\prod_{i=1}^{s^{'}}R_{j, i}^{*}$ is the min-max $s^{'}$-th decomposition of $\tilde{Q}_{j}$ for $j=1, \cdots, p$. Since the number of distinct irreducible factors of $\tilde{Q}_{j}$ is still $s_{j}$, $j=1, \cdots, p$, then it follows from Lemma \ref{le6.2} that
        \begin{eqnarray*}
            d_{\tilde{Q}_{j}}^{*}(s^{'})\leq \max\left\{d-s_{j}+1, \left\lceil \frac{d}{s^{'}}\right\rceil\right\},
        \end{eqnarray*}
        $j=1, \cdots, p$, and hence
        \begin{eqnarray}\label{aeq6.7.15.1}
            \max\limits_{1\leq j\leq p}d_{\tilde{Q}_{j}}^{*}(s^{'})\leq \frac{\alpha d}{\beta(l-n)}.
        \end{eqnarray}
        Then from Lemma \ref{le5.3} and Lemma \ref{le6.4}, we have
        \begin{eqnarray}\label{aeq6.7.16}
            \nu_{\mathcal{W}}^{0}(x)&\geq& \sum_{s=1}^{M}\min\limits_{0\leq t\leq M_{1}}\left\{\nu_{\mathcal{A}_{q^{M_{1}-t}}\mathcal{D}_{q}^{t}\varphi_{s}(\textbf{f})}^{0}(x)\right\}\nonumber\\
            &\geq& \sum_{(\textbf{i})}\Delta(\textbf{i})\sum_{k=1}^{n}i_{j}\min\limits_{0\leq t\leq M_{1}}\left\{\nu_{\mathcal{A}_{q^{M_{1}-t}}\mathcal{D}_{q}^{t}\check{P}_{k}(\textbf{f})}^{0}(x)\right\}\nonumber\\
            &\geq&\frac{\hat{d}\Omega}{d}\sum_{k=1}^{n}\min\limits_{0\leq t\leq M_{1}}\left\{\nu_{\mathcal{A}_{q^{M_{1}-t}}\mathcal{D}_{q}^{t}\tilde{P}_{k}(\textbf{f})}^{0}(x)\right\}\nonumber\\
            &\geq&\frac{\hat{d}\Omega}{d}\min\limits_{0\leq t\leq M_{1}}\left\{\nu_{\mathcal{A}_{q^{M_{1}-t}}\mathcal{D}_{q}^{t}\tilde{Q}_{1}(\textbf{f})}^{0}(x)\right\}\nonumber\\
            &&+\frac{\hat{d}\Omega}{d}\sum_{k=2}^{n}\min\limits_{0\leq t\leq M_{1}}\left\{\nu_{\mathcal{A}_{q^{M_{1}-t}}\mathcal{D}_{q}^{t}\tilde{Q}_{l-n+k}(\textbf{f})}^{0}(x)\right\}\nonumber\\
            &\geq&\frac{\hat{d}\Omega}{d}\sum_{i=1}^{s^{'}}\min\limits_{0\leq t\leq M_{1}}\left\{\nu_{\mathcal{A}_{q^{M_{1}-t}}\mathcal{D}_{q}^{t}R_{1, i}^{*}(\textbf{f})}^{0}(x)\right\}\nonumber\\
            &&+\frac{\hat{d}\Omega}{d}\sum_{k=2}^{n}\sum_{i=1}^{s^{'}}\min\limits_{0\leq t\leq M_{1}}\left\{\nu_{\mathcal{A}_{q^{M_{1}-t}}\mathcal{D}_{q}^{t}R_{l-n+k, i}^{*}(\textbf{f})}^{0}(x)\right\}.
        \end{eqnarray}

        Among $\tilde{Q}_{2}, \cdots, \tilde{Q}_{l-n+1}$, we choose $j^{*}\in\{2, \cdots l-n+1\}$ such that
        \begin{eqnarray}
            &&\sum_{i=1}^{s^{'}}\min\limits_{0\leq t\leq M_{1}}\left\{\nu_{\mathcal{A}_{q^{M_{1}-t}}\mathcal{D}_{q}^{t}R_{j^{*}, i}^{*}(\textbf{f})}^{0}(x)\right\}\nonumber\\
            &=&\max\limits_{2\leq j \leq l-n+1}\sum_{i=1}^{s^{'}}\min\limits_{0\leq t\leq M_{1}}\left\{\nu_{\mathcal{A}_{q^{M_{1}-t}}\mathcal{D}_{q}^{t}R_{j, i}^{*}(\textbf{f})}^{0}(x)\right\}. \label{aeq6.7.16.1}
        \end{eqnarray}
        Let $\check{P}_{j^{*},i}=(R_{j^{*}, i}^{*})^{\hat{d}/\deg  R_{j^{*}, i}^{*}}$ for $i=1, \cdots, s^{'}$. Then we can find another $n-s^{'}$ homogeneous polynomials of degree $\hat{d}$ such that $\{\check{P}_{j^{*}, 1}, \cdots, \check{P}_{j^{*}, n}\}\in \mathcal{H}$. In a similar way, with respect to $\check{P}_{j^{*}, 1}, \cdots, \check{P}_{j^{*}, n}$, through constructing the filtration $W(\textbf{i})$ of $V_{N}$, we obtain
        \begin{eqnarray}\label{aeq6.7.17}
            \nu_{\mathcal{W}}^{0}(x)&\geq& \Omega\sum_{i=1}^{n}\min\limits_{0\leq t\leq M_{1}}\left\{\nu_{\mathcal{A}_{q^{M_{1}-t}}\mathcal{D}_{q}^{t}\check{P}_{j^{*},i}(\textbf{f})}^{0}(x)\right\}\nonumber\\
            &\geq& \sum_{i=1}^{s^{'}}\frac{\hat{d}\Omega}{\deg R_{j^{*}, i}^{*}}\min\limits_{0\leq t\leq M_{1}}\left\{\nu_{\mathcal{A}_{q^{M_{1}-t}}\mathcal{D}_{q}^{t}R_{j^{*},i}^{*}(\textbf{f})}^{0}(x)\right\}\nonumber\\
            &\geq&\frac{\hat{d}\Omega}{d_{\tilde{Q}_{j^{*}}}^{*}(s^{'})} \sum_{i=1}^{s^{'}}\min\limits_{0\leq t\leq M_{1}}\left\{\nu_{\mathcal{A}_{q^{M_{1}-t}}\mathcal{D}_{q}^{t}R_{j^{*},i}^{*}(\textbf{f})}^{0}(x)\right\}\nonumber\\
            &\geq&\frac{\hat{d}\Omega}{\max\limits_{1\leq j\leq p}d_{\tilde{Q}_{j}}^{*}(s^{'})(l-n)}(l-n)\sum_{i=1}^{s^{'}}\min\limits_{0\leq t\leq M_{1}}\left\{\nu_{\mathcal{A}_{q^{M_{1}-t}}\mathcal{D}_{q}^{t}R_{j^{*},i}^{*}(\textbf{f})}^{0}(x)\right\}\nonumber\\
            &\geq& \beta\frac{\hat{d}\Omega}{\alpha d}\sum_{j=2}^{l-n+1}\sum_{i=1}^{s^{'}}\min\limits_{0\leq t\leq M_{1}}\left\{\nu_{\mathcal{A}_{q^{M_{1}-t}}\mathcal{D}_{q}^{t}R_{j,i}^{*}(\textbf{f})}^{0}(x)\right\},
        \end{eqnarray}
        where the last inequality follows by (\ref{aeq6.7.15.1}) and (\ref{aeq6.7.16.1}). Then, by combining (\ref{aeq110}), (\ref{aeq111}), (\ref{aeq6.7.16}) and (\ref{aeq6.7.17}), we have

        \begin{eqnarray*}
            &&\sum_{j=1}^{p}\nu^{0}_{\eta_{q}^{\delta(M_{1})}\tilde{Q}_{j}(\textbf{f})}(x)-\frac{(\alpha+1)d}{\hat{d}\Omega}\nu_{\mathcal{W}}^{0}(x)\nonumber\\
            &=&\left(\sum_{j=1}^{1}+\sum_{j=l-n+2}^{l}+\sum_{j=2}^{l-n+1}\right)\nu^{0}_{\eta_{q}^{\delta(M_{1})}\tilde{Q}_{j}(\textbf{f})}(x)-\frac{(\alpha+1)d}{\hat{d}\Omega}\nu_{\mathcal{W}}^{0}(x)\nonumber\\
            &\leq&\left(\sum_{j=1}^{1}+\sum_{j=l-n+2}^{l}\right)\left(\nu^{0}_{\eta_{q}^{\delta(M_{1})}\tilde{Q}_{j}(\textbf{f})}(x)-(\alpha+1)\sum_{i=1}^{s^{'}}\min_{0\leq t\leq M_{1}}\left\{\nu^{0}_{\mathcal{A}_{q^{M_{1}-t}}\mathcal{D}_{q}^{t}R^{*}_{j,i}(\textbf{f})}(x)\right\}\right)\nonumber\\
            &&+\sum_{j=2}^{l-n+1}\left(\nu^{0}_{\eta_{q}^{\delta(M_{1})}\tilde{Q}_{j}(\textbf{f})}(x)-\frac{\beta}{\alpha}(\alpha+1)\sum_{i=1}^{s^{'}}\min_{0\leq t\leq M_{1}}\left\{\nu^{0}_{\mathcal{A}_{q^{M_{1}-t}}\mathcal{D}_{q}^{t}R^{*}_{j,i}(\textbf{f})}(x)\right\}\right)\nonumber\\
            &\leq&\sum_{j=1}^{l}\left(\nu^{0}_{\eta_{q}^{\delta(M_{1})}\tilde{Q}_{j}(\textbf{f})}(x)-\sum_{i=1}^{s^{'}}\min_{0\leq t\leq M_{1}}\left\{\nu^{0}_{\mathcal{A}_{q^{M_{1}-t}}\mathcal{D}_{q}^{t}R^{*}_{j,i}(\textbf{f})}(x)\right\}\right)\nonumber
        \end{eqnarray*}
        where the last inequality follows from the assumption that $\beta(\alpha+1)\geq \alpha$. Thus, the claim (\ref{aeq6.7.15}) is obtained.

        Next, we claim that
        \begin{eqnarray}\label{aeq6.7.17.1}
            \sum_{j=1}^{p}\left(N\left(r, \frac{1}{\eta_{q}^{\delta(M_{1})}\tilde{Q}_{j}(\textbf{f})}\right)-N\left(r, \frac{1}{\tilde{Q}_{j}(\textbf{f})}\right)\right)=S_{\log}(r, f).
        \end{eqnarray}
        When $M_{1}$ is even, the left hand side of (\ref{aeq6.7.17.1}) is equal to $0$ because $\delta(M_{1})=0$. When $M_{1}$ is odd, that is $\delta(M_{1})=-1$, we can get the claim by following the proof with Lemma \ref{le3.3} as long as we can get
        \begin{eqnarray}\label{aeq6.7.17.2}
            \limsup_{r\rightarrow \infty}\frac{\log^{+}\log^{+} N(r, 1/\tilde{Q}_{j}(\textbf{f}))}{\log\log r}<1.
        \end{eqnarray}
        
        Indeed, without loss of generality, let $\textbf{g} = (1, f_{1}/f_{0}, \cdots, f_{n}/f_{0})$. Since $f_{0}, \cdots, f_{n}$ are algebraically independent, it follows that $\tilde{Q}_{j}(\textbf{f}) = f_{0}^{d} \tilde{Q}_{j}(\textbf{g}) \not\equiv 0$. By applying Lemma \ref{le2.4}, we have
        \begin{eqnarray*}
            N\left(r, \frac{1}{\tilde{Q}_{j}(\textbf{f})}\right)&\leq&dN\left(r, \frac{1}{f_{0}}\right)+N\left(r, \frac{1}{\tilde{Q}_{j}(\textbf{g})}\right)\\
            &\leq&dT_{f}(r)+T(r, \tilde{Q}_{j}(\textbf{g}))+O(1)\leq A_{6}\cdot T_{f}(r),
        \end{eqnarray*}
        where $j=1, \cdots, p$ and $A_{6}$ is a finite positive number only depending on $\tilde{Q}_{j}$, which implies (\ref{aeq6.7.17.2}) according to the assumption (\ref{eq6.5.1}), and hence (\ref{aeq6.7.17.1}) holds. Then by combining (\ref{aeq6.7.14}), (\ref{aeq6.7.15}) and (\ref{aeq6.7.17.1}), we have
        \begin{eqnarray}\label{aeq6.7.17.3}
            \left(pd-\frac{(\alpha+1)d}{\hat{d}\Omega}NM\right)T_{f}(r)\leq \sum_{j=1}^{p}\tilde{N}_{AW,f}^{[M_{1}](s^{'})}(r, \tilde{Q}_{j})+S_{\log}(r, f).
        \end{eqnarray}

Lastly, we should determine the large number $N$ according to $\varepsilon$. Since $\frac{y}{1+y}\leq \log(1+y)\leq y$ for all $y\geq 0$, then when $y\in \left[0, \frac{1}{n(n+1)}\right]$, we have
        \begin{eqnarray*}
            \log(1+y)^{n}-\log(1+(n+1)y)\leq ny-\frac{(n+1)y}{1+(n+1)y}=\frac{n(n+1)y-1}{1+(n+1)y}y\leq 0
        \end{eqnarray*}
        for $n\in\mathbb{N}$, which implies 
        \begin{eqnarray}\label{aeq6.7.18}
            (1+y)^{n}\leq 1+(n+1)y, \quad y\in\left[0, \frac{1}{n(n+1)}\right].
        \end{eqnarray}
     Now, let $N = \lceil \alpha + 1 \rceil (n+1)^{3} \hat{d} \lceil \varepsilon^{-1} \rceil + (n+1)\hat{d}$. Then $N$ is divisible by $\hat{d}$, and
        \begin{eqnarray*}
            0\leq \frac{(n+1)\hat{d}}{N-(n+1)\hat{d}}=\frac{1}{\lceil\alpha+1\rceil(n+1)^{2}\lceil \varepsilon^{-1}\rceil}\leq \frac{1}{n(n+1)}.
        \end{eqnarray*}
        It is easy to check that $\frac{N+j}{N-(n+1-j)\hat{d}}\leq 1+\frac{(n+1)\hat{d}}{N-(n+1)\hat{d}}$ for all $j=1, \cdots, n$. Then, from (\ref{aeq6.7.11}), (\ref{aeq6.7.12}), (\ref{aeq6.7.13}) and (\ref{aeq6.7.18}), we have
        \begin{eqnarray}\label{aeq6.7.19}
            \frac{NM}{\hat{d}\Omega}&\leq&(n+1)\frac{(N+1)\cdots (N+n)}{(N-n\hat{d})\cdots (N-\hat{d})}\nonumber\\
            &\leq&(n+1)\left(1+\frac{(n+1)\hat{d}}{N-(n+1)\hat{d}}\right)^{n}\nonumber\\
            &\leq&(n+1)+\frac{(n+1)^{3}\hat{d}}{N-(n+1)\hat{d}}\nonumber\\
            &\leq&(n+1)+\frac{\varepsilon}{\alpha+1}
        \end{eqnarray}
        and 
        \begin{eqnarray}\label{aeq6.7.20}
            M&\leq& e^{n}\left(1+\frac{N}{n}\right)^{n}\nonumber\\
            &\leq&e^{n}\left(\frac{n+(n+1)\hat{d}}{n}+\frac{\lceil\alpha+1\rceil(n+1)^{3}\hat{d}\lceil \varepsilon^{-1}\rceil}{n}\right)^{n}\nonumber\\
            &\leq&(e\lceil\alpha+1\rceil(n+1)^{2}d!\lceil \varepsilon^{-1}\rceil)^{n}\left(1+\frac{1}{n}+\frac{n+(n+1)\hat{d}}{\lceil\alpha+1\rceil n(n+1)^{2}\hat{d}\lceil \varepsilon^{-1}\rceil}\right)^{n}\nonumber\\
            &\leq&(e\lceil\alpha+1\rceil(n+1)^{2}d!\lceil \varepsilon^{-1}\rceil)^{n}\left(1+\frac{1}{n}+\frac{2}{n(n+1)}\right)^{n}\nonumber\\
            &\leq&4(e\lceil\alpha+1\rceil(n+1)^{2}d!\lceil \varepsilon^{-1}\rceil)^{n},
        \end{eqnarray}

where we used the basic estimation 
\[
\left(1+\frac{n+3}{n(n+1)}\right)^{n} = \left(1+\frac{n+3}{n(n+1)}\right)^{\frac{n(n+1)}{n+3} \cdot \frac{n+3}{n+1}} \leq 4
\]
for all $n \in \mathbb{N}$.  Indeed, it clearly holds when $n=1$. For $n\geq 2$, we have $\frac{n(n+1)}{n+3}\geq 1$ and $\frac{n+3}{n+1}\leq 2$. 
Since $h(y) = \left(1+\frac{1}{y}\right)^{y}$ is decreasing for $y \geq 1$, the estimation follows.
Then, the assertion follows by combining (\ref{aeq6.7.17.3}), (\ref{aeq6.7.19}), and (\ref{aeq6.7.20}).
\end{proof}

In particular, when $s^{'} = 1$ in the above theorem, we have the 
normal Askey-Wilson analogue of truncated Second Main Theorem as follows. In this case, $\alpha = l - n$ and $\beta = 1$ satisfy $\beta(\alpha+1)\geq \alpha$.

\smallskip

\begin{corollary}
Let $f:\mathbb{C} \rightarrow \mathbb{P}^{n}(\mathbb{C})$ be a holomorphic curve with a reduced representation $\textbf{f} = (f_{0}, \cdots, f_{n})$ satisfying that $f_{0}, \cdots, f_{n}$ are algebraically independent, and
\begin{eqnarray}\label{eq6.5.1.1}
    \limsup\limits_{r \to \infty} \frac{\log^{+} \log^{+} T_{f}(r)}{\log \log r} < 1.
\end{eqnarray}
Let $Q_{j}$ be a homogeneous polynomial in $\mathbb{C}[x_{0}, \cdots, x_{n}]$ of degree $d_{j}(\geq 1)$, defining the hypersurface $D_{j}$ in $\mathbb{P}^{n}(\mathbb{C})$, $j=1, \cdots, p$, such that $D_{1}, \cdots, D_{p}$ are located in $l$-subgeneral position. Let $d=\textrm{lcm}\{d_{1}, \cdots, d_{p}\}$. Then, for any $\varepsilon > 0$,
\begin{eqnarray}\label{eq6.5.2.1}
    (p - (l - n + 1)(n + 1) - \varepsilon)T_{f}(r) \leq \sum_{j=1}^{p} \frac{1}{d_{j}} \tilde{N}_{AW}^{[M_{1}]}\left(r, \frac{1}{Q_{j}(\textbf{f})}\right) + S_{\log}(r, f),
\end{eqnarray}
where $r$ tends to infinity outside an exceptional set $F$ satisfying (\ref{eq3.3}), and
\begin{eqnarray*}
    M_{1} = \lfloor 4(e \lceil l - n + 1 \rceil (n + 1)^{2} d! \lceil \varepsilon^{-1} \rceil)^{n} - 1 \rfloor.
\end{eqnarray*}
\end{corollary}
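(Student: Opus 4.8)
The plan is to obtain the corollary as the special case $s' = 1$ of Theorem \ref{the6.7}, after checking admissibility and simplifying all the constants. First I would note that $s' = 1$ is always a legal choice: each $Q_j$ has degree $d_j \geq 1$, hence is non-constant and has at least one irreducible factor, so $s_j \geq 1$; together with $n \geq 1$ this gives $1 \leq \min\{n, \min_{1 \leq j \leq p} s_j\}$. The remaining hypotheses of Theorem \ref{the6.7} --- algebraic independence of $f_0, \ldots, f_n$ and the logarithmic hyper-order bound (\ref{eq6.5.1.1}) --- are identical to those assumed here.

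Next I would evaluate $\alpha$ and $\beta$ for $s' = 1$. The min-max $1$-st decomposition of $Q_j$ is the trivial one $\{Q_j\}$, so $d^{*}_{Q_j}(1) = d_j$; moreover, since $s_j \geq 1$, one has $d_j - s_j + 1 \leq d_j = \lceil d_j / 1 \rceil$, whence $\max\{d_j - s_j + 1, \lceil d_j / 1 \rceil\} = d_j$ and therefore $\alpha = \max_{1 \leq j \leq p}\{d_j / d_j\}(l - n) = l - n$. The same reasoning gives $\max\{d - s_j + 1, \lceil d / 1 \rceil\} = d$ for every $j$, so the denominator in the definition of $\beta$ equals $d(l - n)$ and $\beta = \alpha d / (d(l - n)) = 1$. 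Thus the structural hypothesis $\beta(\alpha + 1) \geq \alpha$ of Theorem \ref{the6.7} becomes $l - n + 1 \geq l - n$, which always holds; moreover $(\alpha + 1)(n + 1) = (l - n + 1)(n + 1)$, and the threshold $M_1 = \lfloor 4(e\lceil \alpha + 1 \rceil (n+1)^2 d! \lceil \varepsilon^{-1} \rceil)^n - 1 \rfloor$ coincides with the value stated in the corollary. Theorem \ref{the6.7} then yields
\[
(p - (l - n + 1)(n + 1) - \varepsilon) T_f(r) \leq \frac{1}{d}\sum_{j=1}^{p} \tilde{N}_{AW, f}^{[M_1](1)}\bigl(r, Q_j^{d/d_j}\bigr) + S_{\log}(r, f),
\]
with $r$ outside an exceptional set satisfying (\ref{eq3.3}).

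It remains to rewrite each term on the right. By (\ref{aeq112}), $\tilde{N}_{AW, f}^{[M_1](1)}(r, Q_j^{d/d_j}) = \tilde{N}_{AW}^{[M_1]}(r, 1/(Q_j^{d/d_j}(\textbf{f})))$, and $Q_j^{d/d_j}(\textbf{f}) = (Q_j(\textbf{f}))^{d/d_j}$. Since the one-sided shift operators act by composition on the argument, $(\eta_q^{k}(g^{m}))(x) = ((\eta_q^{k} g)(x))^{m}$ for $|x|$ large, so $\nu^{0}_{\eta_q^{M_1 - 2t}(g^{m})}(x) = m\,\nu^{0}_{\eta_q^{M_1 - 2t} g}(x)$ and $\nu^{0}_{\eta_q^{\delta(M_1)}(g^{m})}(x) = m\,\nu^{0}_{\eta_q^{\delta(M_1)} g}(x)$; combining this with Lemma \ref{le5.3} gives $\tilde{n}_{AW}^{[M_1]}(r, 1/g^{m}) = m\,\tilde{n}_{AW}^{[M_1]}(r, 1/g) + O(1)$, the $O(1)$ absorbing the finitely many points with $|x| \leq \mathcal{R}(M_1, q)$, and hence $\tilde{N}_{AW}^{[M_1]}(r, 1/(Q_j(\textbf{f}))^{d/d_j}) = \frac{d}{d_j}\tilde{N}_{AW}^{[M_1]}(r, 1/Q_j(\textbf{f})) + O(\log r)$. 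Summing over $j$, dividing by $d$, and absorbing the $O(\log r)$ terms into $S_{\log}(r, f)$ produces (\ref{eq6.5.2.1}).

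Since the whole argument is bookkeeping on top of Theorem \ref{the6.7}, there is no real obstacle; the only point deserving care is the homogeneity identity $\tilde{N}_{AW}^{[M_1]}(r, 1/g^{m}) = m\,\tilde{N}_{AW}^{[M_1]}(r, 1/g) + O(\log r)$, which hinges on $\eta_q$ being a composition operator together with Lemma \ref{le5.3} and on the finitely many exceptional points $|x| \leq \mathcal{R}(M_1, q)$ contributing only to the $S_{\log}$ error.
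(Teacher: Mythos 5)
Your proposal is correct and follows essentially the same route as the paper: specialize Theorem \ref{the6.7} to $s'=1$ (checking $\alpha=l-n$, $\beta=1$, hence $\beta(\alpha+1)\geq\alpha$ and the stated $M_{1}$), rewrite the right-hand side via (\ref{aeq112}), and then reduce $\tilde{N}_{AW}^{[M_{1}]}\left(r, 1/(Q_{j}(\textbf{f}))^{d/d_{j}}\right)$ to $\frac{d}{d_{j}}\tilde{N}_{AW}^{[M_{1}]}\left(r, 1/Q_{j}(\textbf{f})\right)$ up to $O(\log r)$. The only cosmetic difference is in this last step: the paper cites Lemma \ref{le6.4.1}, whereas you re-derive the needed estimate directly as an exact homogeneity identity via Lemma \ref{le5.3} and the composition property of $\eta_{q}$, which is just the argument behind Lemma \ref{le6.4.1} specialized to a power.
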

\begin{proof}
    By applying (\ref{aeq112}) and Theorem \ref{the6.7}, we have the right hand side of (\ref{eq6.5.2.1}) should be
    \begin{eqnarray*}
        \frac{1}{d} \sum_{j=1}^{p} \tilde{N}_{AW}^{[M_{1}]}\left(r, \frac{1}{Q_{j}^{\frac{d}{d_{j}}}(\textbf{f})}\right) + S_{\log}(r, f).
    \end{eqnarray*}
    Then the assertion follows from Lemma \ref{le6.4.1}.
\end{proof}
    \smallskip

\bmhead*{Acknowledgements} We are thankful to Professor Tohge for suggesting useful references that enhanced the quality of this research. The first author would like to thank the support of the China Scholarship Council (Grant No. 202206020078).

%\input{sn-bibliography.bbl}
%\bibliographystyle{IEEEtran}
%\bibliography{sn-article}

\bibliography{sn-bibliography}% 
\end{document}